\newtheorem{thm}[lemma]{Theorem}
\newtheorem{prop}[lemma]{Proposition}
\newif\iffinal 
\definecolor{revBlue}{RGB}{0, 94, 184}
\newif\ifrevised
  \newcommand{\rev}[1]{{\color{revBlue} #1}}
  \newcommand{\rev}[1]{{\normalcolor #1}}
\newif\ifedited
  \newcommand{\hans}[1]{}
  \newcommand{\tjs}[1]{}
  \newcommand{\yell}[1]{}
  \newcommand{\ph}[1]{}
  \newcommand{\revs}[1]{}
  \newcommand{\hans}[1]{{\color{blue} \textbf{[Hans says: #1]}}}
  \newcommand{\tjs}[1]{{\color{purple} \textbf{[Tim says: #1]}}}
  \newcommand{\yell}[1]{{\color{orange} \textbf{[#1]}}}
  \newcommand{\ph}[1]{{\color{green} \textbf{[Phil says: #1]}}}
  \newcommand{\revs}[1]{{\color{olive} \textbf{[Reviewers say: #1]}}}
\pgfplotsset{compat = 1.3}
\def\cl@chapter{\@elt {theorem}}
\pgfplotsset{compat=newest}
\newlength{\figwidth}
\newlength{\figheight}
\newcommand{\R}{\mathbb{R}}
\newcommand{\vertiii}[1]{{\left\vert\kern-0.25ex\left\vert\kern-0.25ex\left\vert #1 
   \right\vert\kern-0.25ex\right\vert\kern-0.25ex\right\vert}} 
\newcommand*{\defeq}{\coloneqq}
\newcommand*{\qefed}{\eqqcolon}
\newcommand*{\BO}{\mathcal{O}}
\newcommand*{\rd}{\mathrm{d}}
\newcommand*{\quark}{\setbox0\hbox{$x$}\hbox to\wd0{\hss$\cdot$\hss}}
\newcommand*{\one}{\mathbb{I}}
\newtheorem{assumption}{Assumption}
\crefname{assumption}{assumption}{assumptions}
\begin{document}

\title{
Convergence Rates of Gaussian ODE Filters
}


\author{Hans Kersting         \and
        T.\ J.\ Sullivan      \and
        Philipp Hennig
}


\institute{
    Hans Kersting \at University of T\"ubingen \\
    and Max Planck Institute for Intelligent Systems \\
    Maria-von-Linden-Stra{\ss}e 6, 72076 T\"ubingen, Germany\\ 
    \email{hans.kersting@uni-tuebingen.de}
    \and
    T.\ J.\ Sullivan \at \rev{University of Warwick\\
    Coventry, CV4 7AL, United Kingdom\\
    \email{t.j.sullivan@warwick.ac.uk}\\
    Zuse Institute Berlin\\
    Takustra{\ss}e 7, 14195 Berlin, Germany\\
    \email{sullivan@zib.de}}
    \and
    Philipp Hennig \at University of T\"ubingen \\
    and Max Planck Institute for Intelligent Systems \\
    Maria-von-Linden-Stra{\ss}e 6, 72076 T\"ubingen, Germany\\
    \email{philipp.hennig@uni-tuebingen.de}
}

\date{Received: date / Accepted: date}

\maketitle

\begin{abstract}
  A recently-introduced class of probabilistic (uncertainty-aware) solvers for ordinary differential equations (ODEs) applies Gaussian (Kalman) filtering to initial value problems.
  These methods model the true solution $x$ and its first $q$ derivatives \emph{a priori} as a Gauss--Markov process $\boldsymbol{X}$, which is then iteratively conditioned on information about $\dot{x}$.
  This article establishes worst-case local convergence rates of order $q+1$ for a wide range of versions of this Gaussian ODE filter, as well as global convergence rates of order $q$ in the case of $q=1$ and an integrated Brownian motion prior, and analyses how inaccurate information on $\dot{x}$ coming from approximate evaluations of $f$ affects these rates.
  Moreover, we show that, in the globally convergent case, the posterior credible intervals are well calibrated in the sense that they globally contract at the same rate as the truncation error.
  We illustrate these theoretical results by numerical experiments which might indicate their generalizability to $q \in \{2,3,\dots\}$.
\keywords{probabilistic numerics,
  ordinary differential equations,
  initial value problems,
  numerical analysis,
  Gaussian processes,
  Markov processes}
  \subclass{65L20 \and 37H10 \and 68W20 \and 93E11}
\end{abstract}

\section{Introduction}

A solver of an initial value problem (IVP) outputs an approximate solution $\hat{x} \colon [0,T] \to \R^d$ of an ordinary differential equation (ODE) with initial condition:

\begin{align} \label{IVP}
  x^{(1)}(t)
  \defeq
  \frac{\rd x}{\rd t} (t)
  &=
  f \left( x(t) \right),\qquad \forall t\in [0,T], 
  \\ 
  x(0) 
  &= 
  x_0\ \in \R^d.
  \notag
\end{align}
(Without loss of generality, we simplify the presentation by restricting attention to the autonomous case.)
The numerical solution $\hat{x}$ is computed by iteratively collecting information on $x^{(1)}(t)$ by evaluating $f \colon \R^d \to \R^d$ at a numerical estimate $\hat{x}(t)$ of $x(t)$ and using these approximate evaluations of the time derivative to extrapolate along the time axis.
In other words, the numerical solution (or \emph{estimator}) $\hat{x}$ of the exact solution (or \emph{estimand}) $x$ is calculated based on evaluations of the vector field $f$ (or \emph{data}).
Accordingly, we treat $\hat{x}$ itself as an estimator, i.e.~a statistic that translates evaluations of $f$ into a probability distribution over $C^1 ([0,T];\R^d)$, the space of continuously differentiable functions from $[0,T]$ to $\R^d$.

This probabilistic interpretation of numerical computations of tractable from intractable quantities as statistical inference of latent from observable quantities applies to all numerical problems and has been repeatedly recommended in the past \citep{poincare1896,diaconis88:_bayes,skilling1991bayesian,ohagan92:_some_bayes_numer_analy,Ritter2000}.
It employs the language of probability theory to account for the epistemic uncertainty (i.e.~limited knowledge) about the accuracy of intermediate and final numerical computations, thereby yielding algorithms which can be more aware of---as well as more robust against---uncertainty over intermediate computational results.
Such algorithms can output probability measures, instead of point estimates, over the final quantity of interest.
This approach, now called \emph{probabilistic numerics (PN)} \citep{HenOsbGirRSPA2015,OatesSullivan2019}, has in recent years been spelled out for a wide range of numerical tasks, including linear algebra, optimization, integration and differential equations, thereby working towards the long-term goal of a coherent framework to propagate uncertainty through chained computations, as desirable, e.g., in statistical machine learning.

In this paper, we determine the convergence rates of a recent family of PN methods \rev{\citep{schober2014nips,Kersting2016UAI,magnani2017,schober2019,TronarpKSH2019}} which recast an IVP as a \emph{stochastic filtering problem} \citep[Chapter 6]{oksendal2003stochastic}, an approach that has been studied in other settings \citep{jazwinski1970stochastic}, but has not been applied to IVPs before.
These methods assume \emph{a priori} that the solution $x$ and its first \rev{$q \in \mathbb N$} derivatives follow a Gauss--Markov process $\boldsymbol{X}$ that solves a stochastic differential equation (SDE).

The evaluations of $f$ at numerical estimates of the true solution can then be regarded as imperfect evaluations of $\dot{x}$, which can then be used for a Bayesian update of $\boldsymbol{X}$.
Such recursive updates along the time axis yield an algorithm whose structure resembles that of Gaussian (Kalman) filtering \citep[Chapter 4]{Sarkka2013}.
These methods add only slight computational overhead compared to classical methods \citep{schober2019} and have been shown to inherit local convergence rates from equivalent classical methods in specific cases \citep{schober2014nips,schober2019}.
These equivalences (i.e.~the equality of the filtering posterior mean and the classical method) are only known to hold in the case of the integrated Brownian motion (IBM) prior and noiseless evaluations of $f$ (in terms of our later notation, the case $R\equiv 0$), as well as under the following restrictions:

Firstly, for $q \in \{1,2,3\}$, and if the first step is divided into sub-steps resembling those of Runge--Kutta methods, an equivalence of the posterior mean of the first step of the filter and the explicit Runge--Kutta method of order $q$ was established in \citet{schober2014nips} (but for $q \in \{2,3\}$ only in the limit as the initial time of the IBM tends to $-\infty$).
Secondly, it was shown \rev{by} \citet{schober2019} that, for $q=1$, the posterior mean after each step coincides with the trapezoidal rule if it takes an additional evaluation of $f$ at the end of each step, known as P(EC)1.
The same paper shows that, for $q=2$, the filter coincides with a third-order Nordsieck method \citep{nordsieck1962numerical} if the filter is in the steady state, i.e.~after the sequence of error covariance matrices has converged.
These results neither cover filters with the integrated Ornstein--Uhlenbeck process (IOUP) prior \citep{magnani2017} nor non-zero noise models on evaluations of $f$.

In this paper, we directly prove convergence rates without first fitting the filter to existing methods, and thereby lift many of the above restrictions on the convergence rates. 
While the more-recent work by \citet{TronarpSarkkaHennig_BayesianODESolvers_2020} also provide convergence rates of estimators of $x$ in the Bayesian ODE filtering/smoothing para-digm, they concern the maximum a posteriori estimator (as computed by the iterated extended Kalman ODE smoother), and therefore differ from our convergence rates of the filtering mean (as computed by the Kalman ODE filter).

\subsection{Contribution}
\label{subsec:contribution}

Our main results---Theorems \ref{lemma:Psi_with_delta} and \ref{theorem:GlobalTruncation}---provide local and global convergence rates of the ODE filter \rev{when the step size $h$ goes to zero}.
Theorem \ref{lemma:Psi_with_delta} shows local convergence rates of $h^{q+1}$ without the above-mentioned previous restrictions---i.e.~for a generic Gaussian ODE filter for all $q \in \mathbb N$, both IBM and IOUP prior, flexible Gaussian initialization (see \Cref{ass:assumption2,ass:eps0_bound}), and arbitrary evaluation noise $R \geq 0$.
As a first global convergence result, Theorem \ref{theorem:GlobalTruncation} establishes global convergence rates of $h^{q}$ in the case of $q=1$, the IBM prior and all fixed measurement uncertainty models \rev{$R$ of order $p \in [1,\infty]$} (see \Cref{ass:R}).
\rev{This} global rate of the worst-case error is matched by the contraction rate of the posterior \rev{credible intervals, as we show in Theorem \ref{theorem:contraction_of_credible_intervals}}.
\rev{Moreover}, we also give closed-form expressions for the steady states in the global case \rev{and illustrate our results as well as their possible generalizability to $q \geq 2$ by experiments in \Cref{sec:experiments}.}

\subsection{Related work on probabilistic ODE solvers}
\label{subsec:related_work}

The Gaussian ODE filter can be thought of as a self-consistent Bayesian decision agent that iteratively updates its prior belief $\boldsymbol{X}$ over $x \colon [0,T] \to \R^d$ (and its first $q$ derivatives) with information on $\dot{x}$ from evaluating $f$.\footnote{Here, the word `Bayesian' describes the algorithm in the sense that it employs a prior over the quantity of interest and updates it by Bayes rule according to a prespecified measurement model (as also used in \citet{skilling1991bayesian,o.13:_bayes_uncer_quant_differ_equat,Kersting2016UAI}).
The ODE filter is not Bayesian in the stronger sense of \citet{Cockayne2017BayesianPN}, and it remains an open problem to construct a Bayesian solver in this strong sense without restrictive assumptions, as discussed in \citet{Wang18}.}
For Gauss--Markov priors, it performs exact Bayesian inference and optimally (with respect to the $L^2$-loss) extrapolates along the time axis.
Accordingly, all of its computations are deterministic and---due to its restriction to Gaussian distributions---only slightly more expensive than classical solvers.
Experiments demonstrating competitive performance \rev{with classical methods} are provided in \citet[Section 5]{schober2019}.

Another line of work (comprising the methods from \rev{\citet{o.13:_bayes_uncer_quant_differ_equat,conrad_probability_2017,teymur2016probabilistic,Lie17,AbdulleGaregnani17,teymur2018a}}) introduces probability measures to ODE solvers in a fundamentally different way---by representing the distribution of all numerically possible trajectories with a set of sample paths.
To compute these sample paths, \citet{o.13:_bayes_uncer_quant_differ_equat} draws them from a (Bayesian) Gaussian process (GP) regression; \rev{\citet{conrad_probability_2017,teymur2016probabilistic,Lie17,teymur2018a}} perturb classical estimates after an integration step with a suitably scaled Gaussian noise; and \citet{AbdulleGaregnani17} perturbs the classical estimate instead by choosing a stochastic step-size.
While \rev{\citet{conrad_probability_2017,teymur2016probabilistic,Lie17,AbdulleGaregnani17,teymur2018a}} can be thought of as (non-Bayesian) `stochastic wrappers' around classical solvers, which produce samples with the same convergence rate, \citet{o.13:_bayes_uncer_quant_differ_equat} employs---like the filter---GP regression to represent the belief on $x$.
While the Gaussian ODE filter can convergence with polynomial order (see results in this paper), 
However, \citet{o.13:_bayes_uncer_quant_differ_equat} only \rev{show} first-order convergence rates and also construct a sample representation of numerical errors, from which samples are drawn iteratively.
A conceptual and experimental comparison between the filter and \citet{o.13:_bayes_uncer_quant_differ_equat} can be found in \citet{schober2019}.
\rev{An additional numerical test against \citet{conrad_probability_2017} was given by \citet{Kersting2016UAI}.
Moreover, \citet{TronarpKSH2019} recently introduced a particle ODE filter, which combines a filtering-based solver with a sampling-based uncertainty quantification (UQ), and compared it numerically with \citet{conrad_probability_2017} and \citet{o.13:_bayes_uncer_quant_differ_equat}.}

All of the above sampling-based methods can hence represent more expressive, non-Gaussian posteriors (as e.g.~desirable for bifurcations), but multiply the computational cost of the underlying method by the number of samples. 
\rev{ODE filters are,} in contrast, not a perturbation of known methods, but \rev{novel methods} designed for computational speed and for a robust treatment of intermediate uncertain values (such as the evaluations of $f$ at estimated points).
\rev{Unless parallelization of the samples in the sampling-based solvers is possible and inexpensive, one can spend the computational budget for generating additional samples on dividing the step size $h$ by the number of samples, and can thereby polynomially decrease the error.
Its Gaussian UQ, however, should not be regarded as the true UQ---in particular for chaotic systems whose uncertainty can be better represented by sampling-based solvers, see e.g.~\citet[Figure 1]{conrad_probability_2017} and \citet[Figure 2]{AbdulleGaregnani17}---but as a rough inexpensive probabilistic treatment of intermediate values and final errors which is supposed to, on average, guide the posterior mean towards the true $x$. 
Therefore, it is in a way more similar to classical non-stochastic solvers than to sampling-based stochastic solvers and, unlike sampling-based solvers, puts emphasis on computational speed over statistical accuracy.
Nevertheless, its Gaussian UQ is sufficient to make the forward models in ODE inverse problems more `uncertainty-aware'; see \citet[Section 3]{KerstingKraemer_godef_inverse_2020}.
}

Accordingly, \rev{the convergence results in this paper} concern the convergence rate of the posterior mean to the true \rev{solution, while} the theoretical results from \rev{\citet{teymur2016probabilistic,o.13:_bayes_uncer_quant_differ_equat,conrad_probability_2017,Lie17,AbdulleGaregnani17,teymur2018a}} provide convergence rates of the variance of the non-Gaussian empirical measure of samples (and not for an individual sample).

\rev{
\subsection{Relation to filtering theory}
\label{subsec:relation_to_filtering_theory}

While Gaussian (Kalman) filtering was first applied to the solution of ODEs by \citet{Kersting2016UAI} and \citet{schober2019}, it has previously been analysed in the filtering, data assimilation as well as linear system theory community.
The convergence results in this paper are concerned with its asymptotics when the step size $h$ (aka time step between data points) goes to zero.
In the classical filtering setting, where the data comes from an external sensor, this quantity is not treated as a variable, as it is considered a property of the data and not, like in our case, of the algorithm.
Accordingly, the standard books lack such an analysis for $h \to 0$---see \citet{jazwinski1970stochastic,anderson1979,Maybeck_79} for filtering, \citet{LawStuartZygalakis15,ReichCotter15} for data assimilation and \citet{CallierDesoer91} for linear system theory---and we believe that our convergence results are completely novel.
It is conceivable that, also for these communities, this paper may be of interest in settings where the data collection mechanism can be actively chosen, e.g.~when the frequency of the data can be varied or sensors of different frequencies can be used.}

\subsection{Outline}

The paper begins with a brief introduction to Gaussian ODE filtering in \Cref{sec:introduction_of_algorithm}.
Next, \Cref{sec:regularity_of_flow,sec:auxiliary_bounds_on_intermediate_quantities} provide auxiliary bounds on the flow map of the ODE and on intermediate quantities of the filter respectively.
With the help of these bounds, \Cref{subsection:local_truncation_error,subsection:Global_truncation_error} establish local and global convergence rates of the filtering mean respectively.
\rev{In} light of these rates, \Cref{sec:calibration_of_credible_intervals} analyses for which measurement noise models the posterior \rev{credible intervals are well calibrated.
These theoretical results are experimentally confirmed and discussed in \Cref{sec:experiments}.
\Cref{sec:discussion} concludes with a high-level discussion.}



\subsection{Notation}
\label{subsec:notation}

We will use the notation $[n] \defeq \{0,\dots,n-1\}$.
For \rev{vectors and matrices}, we will use zero-based numbering, e.g.~$x=(x_0,\dots,x_{d-1}) \in \R^d .$
For a matrix $P \in \R^{n \times m}$ and $(i,j) \in [n] \times [m]$, we will write $P_{i,:} \in \R^{1 \times m} $ for the $i$\textsuperscript{th} row and $P_{:,j}$ for the $j$\textsuperscript{th} column of $P$.
A fixed but arbitrary norm on $\R^d$ will be denoted by $\Vert \quark \Vert$.
\rev{The minimum and maximum of two real numbers $a$ and $b$ will be denoted by $a \wedge b$ and $a  \vee b$ respectively.
Vectors that span all $q$ modeled derivatives will be denoted by bold symbols, such as $\boldsymbol{x}$.}

\section{Gaussian ODE filtering}
\label{sec:introduction_of_algorithm}

This section defines how a Gaussian filter can solve the IVP \cref{IVP}.
In the various subsections, we first explain the choice of prior on $x$, then describe how the algorithm computes a posterior output from this prior (by defining a numerical integrator $\boldsymbol{\Psi}$), and add explanations on the measurement noise of the derivative observations.
To alternatively understand how this algorithm can be derived as an extension of generic Gaussian filtering in probabilistic state space models, see the concise presentation in \cite[Supplement A]{KerstingKraemer_godef_inverse_2020}.

\subsection{Prior on $\boldsymbol{x}$}
\label{subsec:prior_on_x}

In PN, it is common \citep[Section 3(a)]{HenOsbGirRSPA2015} to put a prior measure on the unknown solution \rev{$x$.
Often}, for fast Bayesian inference by linear algebra \citep[Chapter 2]{RasmussenWilliams}, this prior is Gaussian.
To enable GP inference in linear time by Kalman filtering \rev{\citep[Chapter 4.3]{Sarkka2013}}, we further restrict the prior to Markov processes.
As \rev{discussed in \citet[Chapter 12.4]{SarkkaSolin2019}}, a wide class of such Gauss--Markov processes can be captured by a law of the (strong) solution \citep[Chapter 5.3]{oksendal2003stochastic} of a linear SDE with Gaussian initial condition.
\rev{Here---as we, by \cref{IVP}, have information on at least one derivative of $x$---the prior also includes the first $q \in \mathbb N$ derivatives.
 Therefore,} for all $j \in [d]$, we define the vector of time derivatives by $\boldsymbol{X}_j = \left( X_j^{(0)}, \dots, X_j^{(q)} \right)^ \intercal$.
\rev{We define $\boldsymbol{X}_{j}$ as a $(q + 1)$-dimensional stochastic process via the SDE}
\begin{align} \label{SDE}
  &\rd \boldsymbol{X}_{j}\rev{(t)}
  =
  \left ( \rd X_{j}^{(0)}\rev{(t)} , \dots , \rd X_{j}^{(q-1)}\rev{(t)} , \rd X_{j}^{(q)}\rev{(t)}   \right)^{\intercal}
  \\
  &=
  \begin{pmatrix} 0 & 1 & 0 \dots &  0 \\ \vdots & \ddots & \ddots & 0 \\ \vdots & \ddots & 0 & 1 \\ c_0 & \dots & \dots & c_q \end{pmatrix} \begin{pmatrix} X_{j}^{(0)}\rev{(t)} \\ \vdots \\ X_{j}^{(q-1)}\rev{(t)} \\ X_{j}^{(q)}\rev{(t)} \end{pmatrix} \, \rd t + \begin{pmatrix} 0 \\ \vdots \\ 0 \\ \sigma_j \end{pmatrix} \, \rd B_{j}\rev{(t)},
  \notag
\end{align}
\rev{driven by mutually independent one-dimensional Brownian motions $\{B_{j};\ \allowbreak j\in [d]\}$} (independent of \rev{$\boldsymbol{X}(0)$) scaled by $\sigma_j > 0$, with initial condition $\boldsymbol X_{j}(0) \sim \mathcal{N} \allowbreak ( m_j(0) \allowbreak , \allowbreak P_j(0) \allowbreak )$.}
We assume that \rev{$\left \{X_{j}(0);\ j \in [d] \right \}$} are independent.
\rev{In other words, we model the unknown $i$\textsuperscript{th} derivative of the $j$\textsuperscript{th} dimension of the solution $x$ of the IVP \cref{IVP}, denoted by $x^{(i)}_j$, as a draw from a real-valued, one-dimensional GP $X^{(i)}_j$, for all $i \in [q+1]$ and $j \in [d]$, such that $X^{(q)}_j$ is defined by $(c_0,\dots,c_q)$ as well as the Brownian motion scale $\sigma_j$ and $X^{(i-1)}_j$ is defined to be the integral of $X^{(i)}_j$.
Note that, by the} independence of the components of the $d$-dimensional Brownian motion, the components \rev{$\left \{ \left\{ \boldsymbol{X}_{j}(t);\ 0 \leq t \leq T \right\};\ j \in [d]  \right\}$ of $\left\{ \boldsymbol{X}(t);\ 0\leq t\leq T \right\}$} are independent\footnote{More involved correlation models of \rev{$\left \{ \left\{ \boldsymbol{X}_{j}(t);\ 0 \leq t \leq T \right\};\ j \in [d]  \right\}$} are straightforward to incorporate into the SDE \cref{SDE}, but seem complicated to analyse. Therefore, we restrict our attention to independent dimensions. \rev{See \Cref{appendix:dependent_dimensions} for an explanation of this restriction.} Note that one can also use a state space vector $\boldsymbol{X}(t)$ which models other features of $x(t)$ than the derivatives, as demonstrated with Fourier summands in \cite{KerstingMahsereci2020}.\label{footnote:independence}}. 
The \rev{(strong)} solution of \cref{SDE} is a Gauss--Markov process with mean $m_j \colon [0,T] \to \R^{q+1}$ and covariance matrix $P_j \colon [0,T] \to \R^{(q+1)\times (q+1)}$ given by
\begin{align}
  \label{drift_matrix}
  m_j(t) 
  &= 
  A(t) m_j(0),
  \\
  P_j(t) 
  &= 
  A(t)P_j(0) A(t)^{\intercal} + \rev{Q(t)},
  \label{drift_matrix_II}
\end{align}
where the matrices \rev{$A(t),\ Q(t) \in \R^{(q+1)\times (q+1)}$} yielded by the SDE \cref{SDE} are known in closed form \citet[Theorem 2.9]{sarkka2006thesis} \rev{(see \cref{matrix:Q})}.
The precise choice of the prior stochastic process $\boldsymbol{X}$ depends on the choice of \rev{$(c_0,\dots,c_q) \in \mathbb{R}^{q+1}$} in \cref{SDE}.
\rev{While the below algorithm works for all choices of $c$, we restrict our attention to the case of
 \begin{align}
   \label{eq:a_restriction_to_IOUP/IBM}
   (c_0,\dots,c_q)
   \defeq
   (0,\dots,0,-\theta),
   \qquad
   \text{for some}
   \quad
   \theta \geq 0,
 \end{align}
where the} $q$-times integrated Brownian motion (IBM) and \rev{the} $q$-times integrated Ornstein--Uhlenbeck process (IOUP) \rev{with drift parameter $\theta$ is the unique solution of \cref{SDE}, in the case of $\theta = 0$ and $\theta > 0$ respectively \citep[Chapter 5: Example 6.8]{karatzas1991brownian}.
In this case, the matrices} $A$ and $Q$ \rev{from \cref{drift_matrix,drift_matrix_II} are given by}
\rev{
\begin{align} \label{def:A^IOUP}
  \rev{A(t)_{ij}}
  &=
  \begin{cases}
    \one_{i\leq j} \frac{t^{j-i}}{(j-i)!}, & \mbox{if }j\neq q, \\ \rev{\frac{t^{q-i}}{(q-i)!} - \theta \sum_{k=q+1-i}^{\infty} \frac{(-\theta)^{k+i-q-1} t^k}{ k! }} , & \mbox{if }j=q,
  \end{cases}
  \\
  \rev{Q(t)_{ij}}
  &=
  \sigma^2 \frac{t^{2q+1-i-j}}{(2q+1-i-j)(q-i)!(q-j)!} 
  \notag
  \\  
  &\phantom{=}+ \Theta\left(t^{2q+2-i-j}\right).
  \label{eq:Q_IOUP}
\end{align} }
(Derivations of \cref{def:A^IOUP,eq:Q_IOUP}, as well as the precise form of \rev{$Q$ without $\Theta(t^{2q+2-i-j})$,} are presented in \rev{\Cref{appendix:section_A_and_Q}.)}
Hence, for all \rev{$i \in [q+1]$, the prediction of step size $h$} of the $i$\textsuperscript{th} derivative from any state $u\in \R^{q+1}$ is \rev{given by}
\rev{
\begin{align}   
  \left [ A(t) u \right ]_i
  =
  &\sum_{k=i}^q \frac{t^{k-i}}{(k-i)!} u_k
  \notag
  \\
  &- \theta \left [ \sum_{k=q+1-i}^{\infty} \frac{(-\theta)^{k+i-q-1}}{k!} t^k   \right ] u_q.
  \label{A_prediction}
\end{align}
} 
\subsection{The algorithm}     \label{Gaussian ODE filtering}
To avoid the introduction of additional indices, we will define the algorithm $\boldsymbol{\Psi}$ for $d=1$; for statements on the general case of $d\in \mathbb N$ we will use the same symbols from \cref{eq:C^-_predict}--\cref{eq:C_update_MM} as vectors over the whole dimension---see e.g.~\cref{bound:r} for a statement about a general $r \in \R^d$.
By the independence of the dimensions of $\boldsymbol{X}$, due to \cref{SDE}, extension to $d \in \mathbb N$ amounts to applying $\boldsymbol{\Psi}$ to every dimension independently \rev{(recall \Cref{footnote:independence}).}
Accordingly, we may in many of the below proofs w.l.o.g.~assume $d=1$.
Now, as previously spelled out in \citet{Kersting2016UAI,schober2019}, Bayesian filtering of $\boldsymbol{X}$---i.e.~iteratively conditioning $\boldsymbol{X}$ on the information on $X^{(1)}$ from evaluations of $f$ at the mean of the current conditioned ${X}^{(0)}$---yields the following numerical method $\boldsymbol{\Psi}$.
Let \rev{$\boldsymbol{m}(t) = (m^{(0)}(t),\dots,m^{(q)}(t))^{\intercal} \in \R^{q+1}$} be an arbitrary state at some point in time $t \in [0,T]$ (i.e.~$m^{(i)}(t)$ is an estimate for $x^{(i)}(t)$), and let $P(t) \in \R^{(q+1) \times (q+1)}$ be the covariance matrix of $x^{(i)}(t)$.
For $t \in [0,T]$, let the current estimate of $\boldsymbol{x}(t)$ be a normal distribution $\mathcal{N}(\boldsymbol{m}(t),P(t))$, i.e.~the mean \rev{$\boldsymbol{m}(t) \in \R^{q+1}$} represents the best numerical estimate (given data $\{y(h),\allowbreak \dots,\allowbreak y(t)\}$, see \cref{eq:def_y_MAP}) and the covariance matrix $P(t) \in \R^{(q+1) \times (q+1)}$ its uncertainty.
\rev{For the} time step $t \to t+h$ of size $h>0$, the ODE filter first computes the prediction step consisting of \emph{predictive mean}
\begin{align} \label{eq:def_predictive_mean}
  \rev{\boldsymbol{m}^-(t+h)} & \rev{\defeq A(h) \boldsymbol{m}(t) \ \in \R^{q+1},  }
\end{align}
and \emph{predictive covariance}
\begin{align}
  \label{eq:C^-_predict}
  P^-(t+h)
  &\defeq
  A(h)P(t)A(h)^{\intercal} + Q\rev{(h)} \ \in \R^{(q+1)\times (q+1)},
\end{align}
with \rev{$A$ and $Q$ generally defined by \cref{matrix:Q} and, in the considered particular case of \cref{eq:a_restriction_to_IOUP/IBM}, by \cref{def:A^IOUP,eq:Q_IOUP}.
In the subsequent step, the following quantities are computed first}:
the \emph{Kalman gain}
\begin{align}
  \boldsymbol{\beta}&(t+h)
  =
  (\beta^{(0)}(t+h), \dots, \beta^{(q)}(t+h))^{\intercal}
  \notag
  \\
  &\defeq
  \frac{P^-(t+h)_{:1}}{(P^-(t+h))_{11} + R(t+h)} \in \R^{(q+1) \times 1},   
  \label{def_beta_MM}
\end{align}
the \emph{measurement/data on $\dot{x}$}
\begin{align}
  y(t+h)
  &\defeq
  f\left( m^{-,(0)}(t+h) \right) \ \in \R,
  \label{eq:def_y_MAP}
\end{align}
and \emph{innovation/residual}
\begin{align}
  r(t+h)
  &\defeq
  y(t+h) - m^{-,(1)}(t+h) \ \in \R.
  \label{def:r}
\end{align}
Here, $R$ denotes the variance of $y$ (the `measurement noise') and captures the squared difference between the data $y(t+h) = f(m^-(t+h))$ that the algorithm actually receives and the idealised data $\dot{x}(t+h) = f(x(t+h))$ that it `should' receive (see \Cref{subsec:measurement_noise}).
Finally, the mean and the covariance matrix are conditioned on \rev{this data, which yields} the \emph{updated mean}
\begin{align}
  \notag
  \rev{\boldsymbol{\Psi}_{P(t),h}(\boldsymbol{m}(t))}
  &\defeq
  \boldsymbol{m}(t+h)
  \\
  &=
  \boldsymbol{m}^-(t+h) + \boldsymbol{\beta}(t+h) r(t+h),
  \label{def:Psi}
\end{align}
and the \emph{updated covariance}
\begin{align}
  P(t+h)
  &\defeq
  P^-(t+h) -  \frac{P^-(t+h)_{:,1}P^-(t+h)_{1,:}}{P^-(t+h)_{11} + R(t+h)}.
  \label{eq:C_update_MM}
\end{align}
\rev{This concludes the step $t \to t+h$, with the Gaussian distribution $\mathcal N(\boldsymbol m(t+h),P(t+h))$ over $\boldsymbol{x}(t+h)$.}
The algorithm is iterated by computing $\boldsymbol{m}(t+2h) \defeq \boldsymbol{\Psi}_{P(t+h),h}(\boldsymbol{m}(t+h))$ as well as repeating \cref{eq:C^-_predict} and \cref{eq:C_update_MM}, with $P(t+h)$ instead of $P(t)$, to obtain $P(t+2h)$.
In the following, to avoid notational clutter, the dependence of the above quantities on $t$, $h$ and $\sigma$ will be omitted if their values are unambiguous.
Parameter adaptation reminiscent of classical methods (e.g.~for $\sigma$ s.t.~the added variance per step coincide with standard error estimates) have been explored in \citet[Section 4]{schober2019}.

This filter is essentially an iterative application of Bayes rule (see e.g.~\citet[Chapter 4]{Sarkka2013}) based on the prior $\boldsymbol{X}$ on $\boldsymbol{x}$ specified by \cref{SDE} (entering the algorithm via $A$ and $Q$) and the measurement model $y \sim \mathcal{N}(\dot{x},R)$.
Since the measurement model is a likelihood by another name and therefore forms a complete Bayesian model together with the prior $\boldsymbol{X}$, it remains to detail the measurement model (recall \cref{subsec:prior_on_x} for the choice of prior).
Concerning the data generation mechanism for $y$ \cref{eq:def_y_MAP}, we only consider the maximum-a-posteriori point estimate of $\dot{x}(t)$ given $\mathcal{N}(m^{-,(0)}(t),P_{00}^-(t))$; a discussion of more involved statistical models for $y$ as well as an algorithm box for the Gaussian ODE filter can be found in \citet[Subsection 2.2]{schober2019}.
Next, for lack of such a discussion for $R$, we will examine different choices of $R$---which have proved central to the \rev{UQ} of the filter \citep{Kersting2016UAI} and will turn out to affect global convergence properties in \Cref{subsection:Global_truncation_error}.

\subsection{Measurement noise $R$}
\label{subsec:measurement_noise}

Two sources of uncertainty add to $R(t)$: noise from imprecise knowledge of $x(t)$ and $f$.
Given $f$, previous integration steps of the filter (as well as an imprecise initial value) inject uncertainty about how close $m^-(t)$ is to $x(t)$ and how close $y = f(m^-(t))$ is to $\dot{x}(t)) = f(x(t))$.
This uncertainty stems from the discretization error $\Vert m^{-,(0)}(t) - x(t) \Vert$ and, hence, \rev{tends to increase} with $h$.
Additionally, there can be uncertainty from a misspecified $f$, e.g.~when $f$ has estimated parameters, or from numerically imprecise evaluations of $f$, which can be added to $R$---a functionality which classical solvers do not possess.
In this paper, since $R$ depends on $h$ via the numerical uncertainty on $x(t)$, we analyse the influence of noise \rev{$R$ of order $p \in [1,\infty]$} (see \Cref{ass:R}) on the quality of the solution to illuminate for which orders of noise we can trust the solution to which extent and when we should, instead of decreasing $h$, rather spend computational budget on specifying or evaluating $f$ more precisely.
The explicit dependence of the noise on its order $p$ in $h$ resembles, despite the fundamentally different role of $R$ compared to additive noise in \citet{conrad_probability_2017,AbdulleGaregnani17}, the variable $p$ in \citet[Assumption 1]{conrad_probability_2017} and \citet[Assumption 2.2]{AbdulleGaregnani17} in the sense that the analysis highlights how uncertainty of this order can still be modeled without breaking the convergence rates.
(Adaptive noise models are computationally feasible \citep{Kersting2016UAI} but lie outside the scope of our analysis.)

\section{Regularity of flow}
\label{sec:regularity_of_flow}

Before we proceed to the analysis of $\boldsymbol{\Psi}$, we provide all regularity results \rev{necessary for arbitrary $q,d \in \mathbb N$} in this section.
\begin{assumption} \label{ass:f_Global_Lipschitz}
  The vector field $f \in C^{q} ( \R^d; \R^d )$ is globally Lipschitz and all its derivatives of order up to $q$ are uniformly bounded and globally Lipschitz, i.e.~there exists some $L>0$ such that \rev{$\Vert D^{\alpha} f \Vert_{\infty} \leq L$} for all multi-indices $\alpha \in \mathbb{N}_0^{d}$ with \rev{$1 \leq \sum_i \alpha_i \leq q$, and $\Vert D^{\alpha} f(a) - D^{\alpha} f(b) \Vert \leq L \Vert a - b \Vert$ for all multi-indices $\alpha \in \mathbb{N}_0^{d}$ with $0 \leq \sum_i \alpha_i \leq q$.}
\end{assumption}
\Cref{ass:f_Global_Lipschitz} and the Picard--Lindel\"of theorem imply that the solution $x$ is a well-defined element of $C^{q+1} \allowbreak ( [0,T];\allowbreak \R^d )$.
For $i \in [q+1]$, we denote $\frac{\rd^i x}{\rd t^i}$ by $x^{(i)}$.
\rev{Recall that, by} a bold symbol, we denote the vector of these derivatives: $\boldsymbol{x} \equiv ( x^{(0)}, \dots, x^{(q)} )^{\intercal}$.
In particular, the solution $x$ of \cref{IVP} is denoted by $x^{(0)}$.
\rev{Analogously,} we denote the flow of the ODE \cref{IVP} by $\Phi^{(0)}$, i.e.~$\Phi_t^{(0)}(x_0) \equiv x^{(0)}(t)$, and, for all $i\in [q+1]$, its $i$\textsuperscript{th} partial derivative with respect to $t$ by $\Phi^{(i)}$, so that $\Phi_t^{(i)}(x_0) \equiv x^{(i)}(t)$.

\begin{lemma}
  \label{lemma:Taylor_expansion}
  Under \Cref{ass:f_Global_Lipschitz}, for all $a \in \R^d$ and all $h>0$,
  \begin{align}
  \left \Vert \Phi_h^{(i)}(a) - \sum_{k=i}^q  \frac{h^{k-i}}{(k-i)!} \Phi_0 ^{(k)}(a) \right \Vert
  \leq
  Kh^{q+1-i}.
  \label{eq:Taylor_expansion}
  \end{align}
\end{lemma}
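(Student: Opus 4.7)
The plan is to expand the map $t \mapsto \Phi^{(i)}_t(a)$ by Taylor's theorem around $t = 0$ and evaluate at $t = h$. After re-indexing $k \mapsto k - i$, the sum on the left-hand side of \cref{eq:Taylor_expansion} is exactly the degree-$(q - i)$ Taylor polynomial of this map at $h$, so the content of the lemma is a bound on the Taylor remainder.

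To carry this out I first need the regularity $\Phi^{(i)}_{\quark}(a) \in C^{q+1-i}([0,T]; \R^d)$, with $\partial_t^m \Phi^{(i)}_t(a) = \Phi^{(i+m)}_t(a)$ for all $0 \leq m \leq q + 1 - i$. This follows from \Cref{ass:f_Global_Lipschitz} (in particular $f \in C^{q}$) and iterated application of the chain rule to $\dot{\Phi}_t(a) = f(\Phi_t(a))$: Fa\`a di Bruno's formula expresses each $\Phi^{(k)}_t(a)$ for $1 \leq k \leq q + 1$ as a finite polynomial whose monomials are products of components of $D^{\alpha} f(\Phi_t(a))$ with $0 \leq |\alpha| \leq k - 1$. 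With this regularity in hand, Taylor's theorem with integral remainder, applied to the vector-valued map $t \mapsto \Phi^{(i)}_t(a)$, yields
\begin{equation*}
  \Phi^{(i)}_h(a) = \sum_{k=0}^{q-i} \frac{h^{k}}{k!}\, \Phi^{(i+k)}_0(a) + \int_0^h \frac{(h - s)^{q - i}}{(q - i)!}\, \Phi^{(q+1)}_s(a) \, \rd s,
\end{equation*}
and hence the left-hand side of \cref{eq:Taylor_expansion} is bounded in norm by $\frac{h^{q+1-i}}{(q+1-i)!} \sup_{s \in [0, h]} \Vert \Phi^{(q+1)}_s(a) \Vert$.

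The main obstacle is therefore a uniform (in $s \in [0, T]$) bound on $\Vert \Phi^{(q+1)}_s(a) \Vert$. In the Fa\`a di Bruno expansion for $k = q + 1$, every factor of the form $D^{\alpha} f$ with $1 \leq |\alpha| \leq q$ is uniformly bounded by $L$ (\Cref{ass:f_Global_Lipschitz}), while $f$ itself satisfies $\Vert f(y) \Vert \leq \Vert f(0) \Vert + L \Vert y \Vert$ by its global Lipschitzness. A standard Gronwall argument applied to the integrated form of \cref{IVP} then bounds $\Vert \Phi_s(a) \Vert$ on $[0, T]$ by a constant depending only on $T$, $L$, $\Vert f(0) \Vert$ and $\Vert a \Vert$, which in turn bounds each factor $\Vert f(\Phi_s(a)) \Vert$ appearing in the expansion. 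Substituting these estimates yields $\sup_{s \in [0, T]} \Vert \Phi^{(q+1)}_s(a) \Vert \leq K'$ for a constant $K'$ depending on $a$, $T$, $q$ and $L$; since $(q + 1 - i)! \geq 1$, setting $K \defeq K'$ completes the proof.
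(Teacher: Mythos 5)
Your proof takes the same core approach as the paper's: Taylor-expand $t \mapsto \Phi^{(i)}_t(a)$ around $t=0$, recognize the sum in \cref{eq:Taylor_expansion} as the degree-$(q-i)$ Taylor polynomial, and bound the remainder by $\sup_s \Vert \Phi^{(q+1)}_s(a) \Vert$. You use the integral form of the remainder whereas the paper uses the Lagrange form; this is a cosmetic difference. Your Fa\`a di Bruno justification of the regularity $\Phi_{\quark}(a) \in C^{q+1}$ and of the structure of $\Phi^{(q+1)}$ is correct and more explicit than the paper's ``applying the chain rule $q$ times''.

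The one substantive departure is the Gronwall step. The paper asserts directly that $\Vert \Phi^{(q+1)} \Vert \leq L$ uniformly, which relies on reading \Cref{ass:f_Global_Lipschitz} as bounding $f$ itself (the preamble of the assumption says ``the vector field $f$ \dots and all its derivatives \dots are uniformly bounded'', even though the formal ``i.e.''\ clause only lists $1 \leq \sum_i \alpha_i \leq q$). Under that reading, every factor in your Fa\`a di Bruno expansion of $\Phi^{(q+1)}$ is already uniformly bounded by a constant depending only on $L$ and $q$, so there is nothing to Gronwall. Your alternative route---bounding $\Vert f(\Phi_s(a)) \Vert$ via Lipschitzness and Gronwall---is not incorrect, but it makes the constant $K$ in \cref{eq:Taylor_expansion} depend on $a$ and $T$. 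That is weaker than what the lemma is implicitly required to deliver: downstream, \cref{lemma:Delta^-i} and \cref{lemma:r} invoke \cref{eq:Taylor_expansion} at $a = m^{(0)}(nh)$ without any prior bound on $\Vert m^{(0)}(nh) \Vert$, so an $a$-uniform $K$ is needed. The fix is simple: drop the Gronwall paragraph and instead invoke the uniform boundedness of $f$ itself from \Cref{ass:f_Global_Lipschitz}, after which $\sup_s \Vert \Phi_s^{(q+1)}(a) \Vert$ is bounded by a constant depending only on $L$ and $q$, uniformly in $a$.
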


\rev{Here, and in the sequel, $K>0$ denotes a constant independent of $h$ and $\theta$ which may change from line to line.}

\begin{proof}
  By \Cref{ass:f_Global_Lipschitz}, $\Phi^{(q+1)}$ exists and is bounded by $\Vert \Phi^{(q+1)} \Vert \leq L$, which can be seen by applying the chain rule $q$ times to both sides of \cref{IVP}.
  Now, applying $\Vert \Phi^{(q+1)} \Vert \leq L$ to the term $\Phi_{\tau}^{(q+1)}(a)$ (for some $\tau \in (0,h)$) in the Lagrange remainder of the $(q-i)$\textsuperscript{th}-order Taylor expansion of $\Phi_h^{(i)}(a)$ yields \cref{eq:Taylor_expansion}.
  \qed
\end{proof}
\begin{lemma}
  \label{lemma:Phi0_regularity}
  Under \Cref{ass:f_Global_Lipschitz} and for all sufficiently small $h>0$,
  \begin{align}  \label{eq:bound_Phi0}
    \sup_{a\neq b \in \R^d}
    \frac{\left \Vert \Phi^{(0)}_h(a) - \Phi^{(0)}_h(b) \right \Vert }
    {\left \Vert a - b  \right \Vert }
    \leq
    1+2Lh.
  \end{align}
\end{lemma}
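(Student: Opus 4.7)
My plan is to derive the bound from the standard integral representation of the flow together with Grönwall's inequality, and then convert the resulting exponential factor into the claimed linear bound by a Taylor estimate valid for small $h$.

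First, I would write the flow in integral form: for any $a \in \R^d$ and $s \in [0,h]$,
\[
\Phi^{(0)}_s(a) = a + \int_0^s f\bigl(\Phi^{(0)}_\tau(a)\bigr)\,\rd\tau,
\]
so that, subtracting the analogous identity for $b$,
\[
\Phi^{(0)}_s(a) - \Phi^{(0)}_s(b) = (a-b) + \int_0^s \bigl[f\bigl(\Phi^{(0)}_\tau(a)\bigr) - f\bigl(\Phi^{(0)}_\tau(b)\bigr)\bigr]\,\rd\tau.
\]
Taking norms and invoking the global Lipschitz property of $f$ from \Cref{ass:f_Global_Lipschitz} (which applies to the $\alpha = 0$ multi-index case) yields
\[
\bigl\Vert \Phi^{(0)}_s(a) - \Phi^{(0)}_s(b) \bigr\Vert \le \Vert a-b\Vert + L\int_0^s \bigl\Vert \Phi^{(0)}_\tau(a) - \Phi^{(0)}_\tau(b) \bigr\Vert\,\rd\tau.
\]

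Next, I would apply Grönwall's inequality to the function $s \mapsto \Vert \Phi^{(0)}_s(a) - \Phi^{(0)}_s(b)\Vert$, obtaining
\[
\bigl\Vert \Phi^{(0)}_h(a) - \Phi^{(0)}_h(b) \bigr\Vert \le \Vert a - b\Vert\, e^{Lh}.
\]
To finish, I need to pass from $e^{Lh}$ to $1 + 2Lh$; this is where the ``sufficiently small $h$'' hypothesis enters. A short Taylor argument shows that $g(u) \defeq 1 + 2u - e^u$ satisfies $g(0)=0$ and $g'(0) = 1 > 0$, so $g(u) \ge 0$ on some interval $[0,u^\ast]$; taking $h \le u^\ast/L$ gives $e^{Lh} \le 1+2Lh$ and thus \cref{eq:bound_Phi0}.

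I expect no real obstacle here: the only mild subtlety is ensuring the constant in the linear bound is exactly $2L$ rather than, say, $L(1+\epsilon)$, which is precisely what forces the ``sufficiently small $h$'' qualifier in the statement (any constant strictly greater than $L$ would work via the same argument). The proof uses only the Lipschitz continuity of $f$ itself, so the higher regularity in \Cref{ass:f_Global_Lipschitz} is not needed for this lemma.
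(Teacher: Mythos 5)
Your proof is correct and is essentially the same argument as the paper's: the paper cites \cite[Theorem 2.8]{Teschl2012}, which is exactly the Gr\"onwall-based continuous-dependence estimate $\Vert \Phi^{(0)}_h(a) - \Phi^{(0)}_h(b)\Vert \le e^{Lh}\Vert a - b\Vert$ that you derive, followed by the same elementary Taylor conversion $e^{Lh}\le 1+2Lh$ for small $h$. You have simply unpacked the cited theorem; your observation that only the $\alpha=0$ Lipschitz case of \Cref{ass:f_Global_Lipschitz} is needed is accurate.
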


\begin{proof}
  Immediate corollary of \citet[Theorem 2.8]{Teschl2012}.
  \qed
\end{proof}

\rev{Global convergence (\Cref{subsection:Global_truncation_error}) will require the following generalization of \Cref{lemma:Phi0_regularity}.}

\begin{lemma}
  \label{lemma:flow_map_regularity}
  Let $q=1$.
  Then, under \Cref{ass:f_Global_Lipschitz} and for all sufficiently small $h>0$,
  \begin{align}  \label{eq:bound_boldsymbol_Phi}
    \sup_{a\neq b \in \R^d} \frac{\vertiii{\boldsymbol{\Phi}_h(a) - \boldsymbol{\Phi}_h(b)}_h}{\left \Vert a - b  \right \Vert}
    \leq
    1+Kh,
  \end{align}
  where, given the norm $\Vert \quark \Vert$ on $\R^d$ and $h>0$, the new norm $\vertiii{\quark}_h$ on $\R^{(q+1)\times d}$ is defined by
  \begin{align}
    \label{def:h_norm}
    \vertiii{a}_h \defeq \sum_{i=0}^q h^i \left \Vert a_{i,:} \right \Vert.
  \end{align}
\end{lemma}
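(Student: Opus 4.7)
The plan is to unfold the $\vertiii{\cdot}_h$ norm via its definition \cref{def:h_norm} into the two summands corresponding to the zeroth and first derivatives, bound each separately, and then collect the $h$-powers to produce a factor of the form $1 + Kh$.

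For the zeroth-derivative summand, \Cref{lemma:Phi0_regularity} directly supplies
\[
  \left\Vert \Phi_h^{(0)}(a) - \Phi_h^{(0)}(b) \right\Vert \leq (1+2Lh)\,\Vert a - b \Vert
\]
for all sufficiently small $h$. For the first-derivative summand, I would exploit the fact that, under the IVP \cref{IVP} and for $q=1$, the first derivative of the flow satisfies $\Phi_h^{(1)}(a) = f(\Phi_h^{(0)}(a))$. Combining this identity with the global Lipschitz property of $f$ from \Cref{ass:f_Global_Lipschitz} and then applying \Cref{lemma:Phi0_regularity} gives
\[
  \left\Vert \Phi_h^{(1)}(a) - \Phi_h^{(1)}(b) \right\Vert
  \leq L \left\Vert \Phi_h^{(0)}(a) - \Phi_h^{(0)}(b) \right\Vert
  \leq L(1+2Lh)\,\Vert a - b \Vert.
\]

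Multiplying the second bound by $h$ (as dictated by \cref{def:h_norm}) and adding it to the first yields
\[
  \vertiii{\boldsymbol{\Phi}_h(a) - \boldsymbol{\Phi}_h(b)}_h
  \leq \bigl(1 + 2Lh + Lh(1+2Lh)\bigr)\Vert a-b\Vert
  = \bigl(1 + (3L + 2L^2 h)\,h\bigr)\Vert a-b\Vert.
\]
For all sufficiently small $h$, the coefficient $3L + 2L^2 h$ is bounded by some $h$-independent constant $K$, which establishes \cref{eq:bound_boldsymbol_Phi}.

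There is no real obstacle here: the argument is essentially a two-line calculation that leverages the special structure $\Phi^{(1)} = f \circ \Phi^{(0)}$ available only in the $q=1$ case, together with the pre-existing Lipschitz bounds on $f$ and $\Phi^{(0)}$. The only point worth noting is that the weighting $h^i$ in $\vertiii{\cdot}_h$ is exactly what absorbs the extra factor of $L$ picked up from differentiating the flow once, so that the net growth constant remains linear in $h$.
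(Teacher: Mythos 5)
Your proof is correct and follows essentially the same route as the paper: both decompose $\vertiii{\cdot}_h$ into the two derivative components, apply \Cref{lemma:Phi0_regularity} to the zeroth component, and use the identity $\Phi^{(1)}_h = f \circ \Phi^{(0)}_h$ together with the Lipschitz bound on $f$ for the first. The only difference is that you carry out the final arithmetic $1 + 2Lh + Lh(1+2Lh) \leq 1 + Kh$ explicitly, which the paper leaves implicit.
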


\begin{remark}
  The necessity of $\vertiii{\quark}_h$ stems from the fact that---unlike other ODE solvers---the ODE filter $\boldsymbol{\Psi}$ additionally estimates and uses the first $q$ derivatives in its state $\boldsymbol{m} \in \R^{(q+1)\times d}$, whose development cannot be bounded in $\Vert \quark \Vert$, but in $\vertiii{\quark}_h$.
  The norm $\vertiii{\quark}_h$ is used to make rigorous the intuition that the estimates of the solution's time derivative are `one order of $h$ worse per derivative'.
\end{remark}

\begin{proof}
  We bound the second summand of
  \begin{align} \label{eq:expand_norm_Phi_h}
    &\vertiii{\boldsymbol{\Phi}_h(a) - \boldsymbol{\Phi}_h(b)}_h
    \quad
    \stackrel{\text{\cref{def:h_norm}}}{=}
    \\
    &\underbrace{\Big \Vert \Phi^{(0)}_h(a) - \Phi^{(0)}_h(b) \Big \Vert}_{\leq (1+2Lh) \Vert a - b \Vert, \text{ by }\cref{eq:bound_Phi0}}
    +
    \
    h
    \Big \Vert \underbrace{\Phi^{(1)}_h(a)}_{=f\left(\Phi^{(0)}_h(a) \right)} - \underbrace{\Phi^{(1)}_h(b)}_{=f\left(\Phi^{(0)}_h(b) \right)} \Big \Vert
    \notag
  \end{align}
  by
  \begin{align}
    &\left \Vert f\left(\Phi^{(0)}_h(a)\right) - f\left(\Phi^{(0)}_h(b)  \right) \right \Vert
    \stackrel{\text{Ass. }\ref{ass:f_Global_Lipschitz}}{\leq}
    \label{eq:Phi1_Lipschitz_bound}
    \\
    &\qquad L \left \Vert \Phi^{(0)}_h(a) - \Phi^{(0)}_h(b)  \right \Vert
    \stackrel{\text{\cref{eq:bound_Phi0}}}{\leq}
    L(1+2Lh) \left \Vert a - b \right \Vert.
    \notag
  \end{align}
  Inserting \cref{eq:Phi1_Lipschitz_bound} into \cref{eq:expand_norm_Phi_h} concludes the proof.
  \qed
\end{proof}

\section{The role of the state misalignments $\delta$}
\label{sec:role_of_state_misalignment}

In Gaussian ODE filtering, the interconnection between the estimates of the ODE solution $x(t)=x^{(0)}(t)$ and its first $q$ derivatives $\{x^{(1)}(t), \dots, x^{(q)}(t)\}$ is intricate.
From a purely analytical point of view, every possible estimate $m(t)$ of $x(t)$ comes with a fixed set of derivatives, which are implied by the ODE, for the following reason:
Clearly, by \cref{IVP}, the estimate $m^{(1)}(t)$ of $x^{(1)}(t)$ ought to be $f(m(t))$. 
More generally (for $i \in [q+1]$) the estimate $m^{(i)}(t)$ of $x^{(i)}(t)$ is determined by the ODE as well. 
To see this, let us first recursively define $f^{(i)} \colon \R^d \to \R^d$ by $f^{(0)}(a) \defeq a$, $f^{(1)}(a) \defeq f(a)$ and $f^{(i)}(a) \defeq [ \nabla_x f^{(i-1)} \cdot f  ](a)$.
Now, differentiating the ODE, \cref{IVP}, $(i-1)$-times by the chain rule yields
\begin{align}
  x^{(i)}(t)
  =
  f^{(i-1)}(t)\left (x^{(0)}(t)\right ),
\end{align}
which implies that $m^{(i)}(t)$ ought to be $f^{(i-1)}(t)\left (m^{(0)}(t)\right )$
Since  
\begin{align}   \label{eq:Phi_i_=_f_i-1}
  \Phi_0^{(i)} \left(m^{(0)}(nh) \right) = f^{(i-1)}\left(m^{(0)}(nh)\right)
\end{align}
(which we prove in \Cref{subsec:Proof_of_Eq}), this amounts to requiring that
\begin{align} \label{eq:analytically_desirable_derivative_estimates}
  m^{(i)}(t)
  \overset{!}{=}
  \Phi_0^{(i)} \left(m^{(0)}(nh) \right).
\end{align}
Since $\Phi_0^{(i)}$ is (recall \Cref{sec:regularity_of_flow}) the $i$\textsuperscript{th} time derivative of the flow map $\Phi^{(0)}$ at $t=0$, this simply means that $m^{(i)}(t)$ would be set to the `true' derivatives in the case where the initial condition of the ODE, \cref{IVP}, is $x(0) = m^{(0)}(t)$ instead of $x(0)=x_0$---or, more loosely speaking, that the derivative estimates $m^{(i)}(t)$ are forced to comply with $m^{(0)}(t)$, irrespective of our belief $x^{(i)}(t) \sim \mathcal{N}(m^{(i)}(t),P_{ii}(t))$.
The Gaussian ODE filter, however, does not use this (intractable) analytical approach.
Instead, it jointly models and infers $x^{(0)}(t)$ and its first $q$ derivatives $\{x^{(1)}(t),\allowbreak \dots,\allowbreak x^{(q)}(t) \}$ in a state space $\boldsymbol{X}$, as detailed in \Cref{sec:introduction_of_algorithm}.
The thus-computed filtering mean estimates $m^{(i)}(t)$ depend not only on the ODE but also on the statistical model---namely on the prior (SDE) and measurement noise $R$; recall \Cref{subsec:prior_on_x,subsec:measurement_noise}. 
In fact, the analytically-desirable derivative estimate, \cref{eq:analytically_desirable_derivative_estimates}, is, for $i=1$, only satisfied if $R=0$ (which can be seen from \cref{def:Psi}), and generally does not hold for $i\geq 2$ since both $f^{(i-1)}$ and $\Phi^{(i)}$ are inaccessible to the algorithm.
The numerical example in \Cref{sec:illustrative_example} clarifies that $\delta^{(i)}$ is likely to be strictly positive, even after the first step $0 \to h$.  

This inevitable mismatch, between exact analysis and approximate statistics, motivates the following definition of the $i$\textsuperscript{th} state \emph{$i$\textsuperscript{th} state misalignment at time $t$}:
\begin{align}
  \label{def:delta^i}
  \delta^{(i)}(t)
  \defeq
  \left \Vert m^{(i)}(t) - \Phi_0^{(i)}\left(m^{(0)}(t)\right)  \right \Vert
  \geq
  0.
\end{align}
Intuitively speaking, $\delta^{(i)}(t)$ quantifies how large this mismatch is for the $i$\textsuperscript{th} derivative at time $t$.
Note that $\delta^{(i)}(t) = 0$ if and only if \cref{eq:analytically_desirable_derivative_estimates} holds---i.e.~for $i=1$ iff $R=0$ (which can be seen from \cref{def:Psi}) and only by coincidence for $i\geq 2$ since both $f^{(i-1)}$ and $\Phi^{(i)}_0$ are inaccessible to the algorithm.
(Since $\Phi^{(0)}_0 = \operatorname{Id}$, $\delta^{(0)}(t)=0$ for all $t$.)

The possibility of $\delta^{(i)} > 0$, for $i\geq 1$, is inconvenient for the below worst-case analysis since (if \cref{eq:analytically_desirable_derivative_estimates} held true and $\delta^{(i)} \equiv 0$) the prediction step of the drift-less IBM prediction ($\theta=0$) would coincide with a Taylor expansions of the flow map $\Phi^{(i)}_0$; see \cref{A_prediction}.
But, because $\delta^{(i)} \neq 0$ in general, we have to additionally bound the influence of $\delta \geq 0$ which complicates the below proofs further.

Fortunately, we can \emph{locally} bound the import of $\delta^{(i)}$ by the easy \Cref{lemma:initial_delta_bound} and \emph{globally} by the more complicated \Cref{lemma:delta_global_bound} (see \Cref{subsec:global_bounds_on_state_misalignments}).
Intuitively, these bounds demonstrate that the order of the deviation from a Taylor expansion of the state $\boldsymbol{m} = [m^{(0)},\allowbreak \dots,\allowbreak m^{(q)}]$ due to $\delta$ is not smaller than the remainder of the Taylor expansion.
This means, more loosely speaking, that the import of the $\delta^{(i)}$ is swallowed by the Taylor remainder.
This effect is locally captured by \Cref{lemma:Delta^-i} and globally by \Cref{lemma:truncation_error_global_bound}.
The global convergence rates of $\delta^{(i)}(T)$, as provided by \Cref{lemma:truncation_error_global_bound}, are experimentally demonstrated in \Cref{appendix:section:delta_convergence}.

\section{Auxiliary bounds on intermediate quantities}
\label{sec:auxiliary_bounds_on_intermediate_quantities}

Recall from \cref{eq:a_restriction_to_IOUP/IBM} that $\theta = 0$ and $\theta>0$ denote the cases of IBM and IOUP prior with drift coefficient $\theta$ respectively.
The ODE filter $\Psi$ iteratively computes the filtering mean $\boldsymbol{m}(nh)=( m^{(0)}(nh),\allowbreak\dots \allowbreak, m^{(q)}(nh) )^{\intercal} \in \R^{(q+1)}$ as well as error covariance matrices $P(nh) \in \R $ on the mesh $\{ nh \}_{n=0}^{T/h}$.
(Here and in the following, we assume w.l.o.g.~that $T/h \in \mathbb{N}$.)
Ideally, the truncation error over all derivatives
\begin{align}
  \label{eq:def_varpepsilon}
  \rev{  \boldsymbol{\varepsilon}(nh) \defeq (\varepsilon^{(0)}(nh), \dots, \varepsilon^{(q)}(nh)  )^{\intercal} \defeq \boldsymbol{m}(nh) - \boldsymbol{x}(nh),  }
\end{align}
falls quickly as $h \to 0$ and is \rev{estimated} by the standard deviation $\sqrt{P_{00}(nh)}$.
Next, we present a classical worst-case convergence analysis over all $f$ satisfying \Cref{ass:f_Global_Lipschitz};
see \Cref{sec:discussion} for a discussion of the desirability and feasibility of an average-case analysis.
To this end, we bound the added error of every step by intermediate values, defined in \cref{def_beta_MM,def:r},
\begin{align}
  \label{def:Delta}
  &\Delta^{(i)}((n+1)h)
  \defeq
  \left \Vert \Psi_{P(nh),h}^{(i)}(\boldsymbol{m}(nh)) - \Phi_{h}^{(i)} \left(m^{(0)}(nh) \right)    \right \Vert
  \\
  &\stackrel{\text{\cref{def:Psi}}}{\leq}
  \underbrace{\left \Vert \left(A(h) \boldsymbol{m}(nh) \right)_i - \Phi_{h}^{(i)} \left( m^{(0)}(nh) \right) \right \Vert}_{\qefed \Delta^{-(i)}((n+1)h)}
  \notag
  \\
  &\phantom{\stackrel{\text{\cref{def:Psi}}}{\leq}}  +
  \left \Vert \beta^{(i)}(\rev{(n+1)h}) \right \Vert \left \Vert r(\rev{(n+1)h}) \right \Vert,      \label{def:Delta^-}
\end{align}
and bound these quantities in the order $\Delta^{-(i)}$, $r$, $\beta^{(i)}$.
These bounds will be needed for the local and global convergence analysis in \Cref{subsection:local_truncation_error,subsection:Global_truncation_error} \rev{respectively}.
Note that, intuitively, $\Delta^{-(i)}((n+1)h)$ and $\Delta^{(i)}((n+1)h)$ denote the \emph{additional} numerical error which is added in the $(n+1)$\textsuperscript{th} step to the $i$\textsuperscript{th} derivative of the predictive mean $m^{-,(i)}(t+h)$ and the updated mean $m^{(i)}(t+h)$, respectively.
\begin{lemma}  \label{lemma:Delta^-i}
  Under \Cref{ass:f_Global_Lipschitz}, for all $i \in [q+1]$ and all $h>0$,
  \begin{align} 
    \Delta^{-(i)}((n+1)h)
    \leq
    &K \left [ 1 +  \rev{\theta \left \Vert m^{(q)}(nh)  \right \Vert} \right ] h^{q+1-i} 
    \notag
    \\
    &+ \sum_{k=i}^q \frac{h^{k-i}}{(k-i)!}\delta^{(k)}(nh).
    \label{bound:Delta_minus_IBM/IOUP}
  \end{align}
\end{lemma}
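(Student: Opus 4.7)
The plan is to expand the predicted state $[A(h)\boldsymbol{m}(nh)]_i$ explicitly via \cref{A_prediction}, compare it term by term with a truncated Taylor expansion of $\Phi_h^{(i)}(m^{(0)}(nh))$, and absorb the discrepancies into the three quantities appearing on the right-hand side of \cref{bound:Delta_minus_IBM/IOUP}: an $O(h^{q+1-i})$ Taylor remainder, an $O(\theta \Vert m^{(q)}(nh)\Vert h^{q+1-i})$ IOUP correction, and the state-misalignment sum $\sum_{k=i}^q \frac{h^{k-i}}{(k-i)!}\delta^{(k)}(nh)$.

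The first step is to insert $\pm \sum_{k=i}^q \frac{h^{k-i}}{(k-i)!}\Phi_0^{(k)}\left(m^{(0)}(nh)\right)$ inside the norm defining $\Delta^{-(i)}$ and apply the triangle inequality. This produces three summands. The first,
\begin{align*}
  \left\Vert \sum_{k=i}^q \frac{h^{k-i}}{(k-i)!}\left[m^{(k)}(nh) - \Phi_0^{(k)}\left(m^{(0)}(nh)\right)\right]\right\Vert,
\end{align*}
is bounded by $\sum_{k=i}^q \frac{h^{k-i}}{(k-i)!}\delta^{(k)}(nh)$ directly from the triangle inequality and the definition \cref{def:delta^i}. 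The second summand,
\begin{align*}
  \left\Vert \sum_{k=i}^q \frac{h^{k-i}}{(k-i)!}\Phi_0^{(k)}\left(m^{(0)}(nh)\right) - \Phi_h^{(i)}\left(m^{(0)}(nh)\right)\right\Vert,
\end{align*}
is exactly the quantity controlled by \Cref{lemma:Taylor_expansion} and hence bounded by $K h^{q+1-i}$.

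The third summand arises only in the IOUP case ($\theta>0$) and reads
\begin{align*}
  \theta \left|\sum_{k=q+1-i}^{\infty}\frac{(-\theta)^{k+i-q-1}}{k!}h^k\right|\left\Vert m^{(q)}(nh)\right\Vert.
\end{align*}
Factoring out $h^{q+1-i}$ from the series leaves a convergent power series in $h$ whose leading coefficient is $\frac{1}{(q+1-i)!}$, so for all sufficiently small $h$ the absolute value of the series is bounded by a constant multiple of $h^{q+1-i}$, yielding the contribution $K\theta\Vert m^{(q)}(nh)\Vert h^{q+1-i}$. Combining the three bounds gives exactly \cref{bound:Delta_minus_IBM/IOUP}.

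The main (minor) obstacle is careful bookkeeping of the IOUP correction: one must verify that the infinite series in \cref{A_prediction} really has its lowest-order term in $h$ equal to $h^{q+1-i}$ and that the remaining tail stays uniformly controlled in $h$ (independent of $\theta$ after factoring). Everything else is a routine triangle-inequality split combined with the already-proved Taylor estimate of \Cref{lemma:Taylor_expansion}, so the overall argument is short.
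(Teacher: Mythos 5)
Your proposal is correct and takes essentially the same approach as the paper's own proof: both split $\Delta^{-(i)}$ via the triangle inequality into the state-misalignment sum, the $O(h^{q+1-i})$ Taylor remainder controlled by \Cref{lemma:Taylor_expansion}, and the IOUP correction term $K\theta \Vert m^{(q)}(nh)\Vert h^{q+1-i}$ coming from the infinite series in \cref{A_prediction}. The extra bookkeeping you provide on the IOUP series (verifying its leading-order term after factoring out $h^{q+1-i}$) makes explicit a step the paper leaves implicit.
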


\begin{proof}
  We may assume, as explained in \Cref{Gaussian ODE filtering}, without loss of generality that $d=1$.
  We apply the triangle inequality to the definition of $\Delta^{-(i)}((n+1)h)$, as defined in \cref{def:Delta^-}, which, by \cref{A_prediction}, yields
  \begin{align}
    \Delta&^{-(i)}((n+1)h)
    \quad
    \leq
    \label{ineq:proof_Delta-_bound}
    \\
    &\sum_{k=i}^q \frac{h^{k-i}}{(k-i)!} \delta^{(k)}(nh) + \rev{K \theta \left \vert m^{(q)}(nh)  \right \vert  h^{q+1-i}}
    \notag
    \\
    &+
    \underbrace{\left \vert \sum_{l=i}^q \frac{h^{l-i}}{(l-i)!} \Phi_{0}^{(l)}\left( m^{(0)}(nh) \right) - \Phi_h^{(i)}\left( m^{(0)}(nh) \right)  \right \vert}_{\leq Kh^{q+1-i}, \text{ by \cref{eq:Taylor_expansion}}}.
    \notag
  \end{align}
  \qed
\end{proof}

\begin{lemma}
  \label{lemma:r}
  Under \Cref{ass:f_Global_Lipschitz} and for all sufficiently small $h>0$,
  \begin{align}
    \notag  
    \left \Vert r((n+1)h) \right \Vert
    \leq
    &K \left [ 1 + \rev{\theta \left \Vert m^{(q)}(nh)  \right \Vert} \right ] h^q
    \\
    &+
    K \sum_{k=1}^q \frac{h^{k-1}}{(k-1)!} \delta^{(k)}(nh).
    \label{bound:r}
  \end{align}
\end{lemma}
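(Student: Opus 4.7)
The plan is to decompose $r((n+1)h)$ as a sum of terms that are already controlled by \Cref{lemma:Delta^-i}, by inserting the true first derivative of the flow starting at the estimate $m^{(0)}(nh)$. Concretely, since $\Phi_h^{(1)}(m^{(0)}(nh)) = f(\Phi_h^{(0)}(m^{(0)}(nh)))$, I would write
\begin{align*}
  r((n+1)h)
  &\stackrel{\cref{def:r},\cref{eq:def_y_MAP}}{=}
  f\left(m^{-,(0)}((n+1)h)\right) - m^{-,(1)}((n+1)h)
  \\
  &= \bigl[ f(m^{-,(0)}((n+1)h)) - f(\Phi_h^{(0)}(m^{(0)}(nh))) \bigr]
  + \bigl[ \Phi_h^{(1)}(m^{(0)}(nh)) - m^{-,(1)}((n+1)h) \bigr],
\end{align*}
and then apply the triangle inequality.

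For the first bracket, I would use the Lipschitz bound from \Cref{ass:f_Global_Lipschitz} to get
\[
  \left\Vert f(m^{-,(0)}((n+1)h)) - f(\Phi_h^{(0)}(m^{(0)}(nh))) \right\Vert
  \leq
  L \, \Delta^{-(0)}((n+1)h),
\]
by the very definition \cref{def:Delta^-} of $\Delta^{-(0)}$. The second bracket is, by definition, $\Delta^{-(1)}((n+1)h)$. Hence
\[
  \Vert r((n+1)h) \Vert
  \leq
  L \, \Delta^{-(0)}((n+1)h) + \Delta^{-(1)}((n+1)h).
\]

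Both summands are now immediately controlled by \Cref{lemma:Delta^-i}: the first is bounded by $K[1+\theta \Vert m^{(q)}(nh) \Vert] h^{q+1} + \sum_{k=1}^{q} \frac{h^k}{k!} \delta^{(k)}(nh)$ (using $\delta^{(0)} \equiv 0$), and the second by $K[1+\theta \Vert m^{(q)}(nh) \Vert] h^{q} + \sum_{k=1}^{q} \frac{h^{k-1}}{(k-1)!} \delta^{(k)}(nh)$. Summing these and absorbing the $h^{q+1}$ term into the $h^q$ term (valid for sufficiently small $h$, by adjusting $K$) and likewise absorbing $\sum_k \frac{h^k}{k!} \delta^{(k)}$ into $\sum_k \frac{h^{k-1}}{(k-1)!} \delta^{(k)}$ yields the claimed bound.

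The argument is almost entirely mechanical once the right decomposition is found; the only subtle point is ensuring that the insertion of $\Phi_h^{(1)}(m^{(0)}(nh))$ is actually useful, i.e.~that the Lipschitz step on $f$ trades a difference of predictive and true first derivative for the zeroth-derivative difference that \Cref{lemma:Delta^-i} knows how to bound. I do not foresee any genuine obstacle, since both resulting quantities $\Delta^{-(0)}$ and $\Delta^{-(1)}$ appear directly in \cref{bound:Delta_minus_IBM/IOUP} with the correct powers of $h$ to produce the stated $h^q$-rate and the stated $\delta^{(k)}$-weights.
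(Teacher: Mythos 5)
Your proof is correct. The key observation---inserting $\Phi_h^{(1)}\bigl(m^{(0)}(nh)\bigr) = f\bigl(\Phi_h^{(0)}(m^{(0)}(nh))\bigr)$ between the two halves of the residual so that Lipschitzness turns the first bracket into $L\,\Delta^{-(0)}((n+1)h)$ and the second is literally $\Delta^{-(1)}((n+1)h)$---works exactly as you say, and the subsequent absorption of $h^{q+1}$ into $h^q$ and of $\tfrac{h^k}{k!}\delta^{(k)}$ into $\tfrac{h^{k-1}}{(k-1)!}\delta^{(k)}$ is valid for small $h$ with a $\theta$-independent constant, as required.

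This is a cleaner route than the paper's. The paper's proof (\Cref{appendix:local_technical_lemma}) does not re-invoke \Cref{lemma:Delta^-i}; instead it substitutes the explicit Taylor form of $m^{-,(i)}$ into \cref{def:r} and then bounds three pieces $I_1,I_2,I_3$ directly: $I_1$ is a Lipschitz estimate that produces the $\delta^{(k)}$-weights and the $\theta$-term, $I_2$ is a Lipschitz-plus-Taylor-remainder estimate of order $h^{q+1}$, and $I_3$ is a Taylor-remainder estimate of order $h^{q}$. These are exactly the ingredients of \Cref{lemma:Delta^-i}, re-derived in situ; in your version $I_1+I_2$ becomes $L\,\Delta^{-(0)}$ and the leftover pieces become $\Delta^{-(1)}$. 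What your version buys is modularity and brevity: the residual is shown to be controlled by a weighted sum of the already-bounded predictive misfits $\Delta^{-(0)},\Delta^{-(1)}$, which makes the structural reason for the $h^{q}$-rate and the $\delta^{(k)}$-weights immediate. What the paper's version buys is self-containedness of the constants in the bound (every $K$ is traced explicitly), though here that offers no real advantage since \Cref{lemma:Delta^-i} is established first anyway.

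Two minor cleanups to make your write-up match the paper's conventions: (i) justify the identity $f\circ\Phi^{(0)}_h = \Phi^{(1)}_h$ by pointing to \cref{eq:appendix_more_general} (or simply to the ODE \cref{IVP}); and (ii) when you absorb the higher-order terms, note that the resulting constant remains independent of $\theta$, since the $\theta$-dependence already sits explicitly outside the $K$ in the bound from \Cref{lemma:Delta^-i}, as the paper requires of all its constants $K$.
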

\begin{proof}
  See \Cref{appendix:local_technical_lemma}.
  \qed
\end{proof}
To bound the Kalman gains $\boldsymbol{\beta}(nh)$, we first need to assume that the orders of the initial covariance matrices are sufficiently high (matching the latter required orders of the initialization error; see \Cref{ass:eps0_bound}).

\begin{assumption} \label{ass:assumption2}
  The entries of the initial covariance matrix $P(0)$ satisfy, for all $k,l \in [q+1]$, $\Vert P(0)_{k,l} \Vert \leq K_0 h^{2q+1-k-l}$, where $K_0>0$ is a constant independent of $h$.
\end{assumption}

We make this assumption, as well as \Cref{ass:eps0_bound}, explicit (instead of just making the stronger assumption of exact initializations with zero variance), because it highlights how statistical or numerical uncertainty on the initial value effects the accuracy of the output of the filter---a novel functionality of PN with the potential to facilitate a management of the computational budget across a computational chain with respect to the respective perturbations from different sources of uncertainty \citep[Section 3(d)]{HenOsbGirRSPA2015}.

\begin{lemma}
  \label{lemma:Kalman_gain_local_order}
  Under \Cref{ass:assumption2}, for all $i\in [q+1]$ and for all $h>0$, $ \Vert \beta^{(i)}(h) \Vert \leq K h^{1-i}$.
\end{lemma}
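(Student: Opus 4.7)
The plan is to bound numerator and denominator of $\beta^{(i)}(h) = P^-(h)_{i1}/\bigl(P^-(h)_{11}+R(h)\bigr)$ separately via a careful order count, using Assumption \ref{ass:assumption2} on the initial covariance together with the explicit expressions \cref{def:A^IOUP} for $A(h)$ and \cref{eq:Q_IOUP} for $Q(h)$.

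First I would record the key order estimate $A(h)_{ij} = \mathcal{O}(h^{j-i})$ whenever $i\leq j$ (and $A(h)_{ij}=0$ when $i>j$ and $j\neq q$), which follows directly from \cref{def:A^IOUP}: for $j<q$ the entry equals $h^{j-i}/(j-i)!$ if $i\leq j$, while for $j=q$ the leading term is $h^{q-i}/(q-i)!$. Combined with Assumption \ref{ass:assumption2}, which says $P(0)_{jk} = \mathcal{O}(h^{2q+1-j-k})$, two matrix multiplications give
\begin{align*}
\bigl(A(h)P(0)\bigr)_{ik} = \sum_{j\geq i} \mathcal{O}(h^{j-i})\cdot \mathcal{O}(h^{2q+1-j-k}) = \mathcal{O}(h^{2q+1-i-k}),
\end{align*}
and then, summing over $k\geq l$,
\begin{align*}
\bigl(A(h)P(0)A(h)^{\intercal}\bigr)_{il} = \mathcal{O}(h^{2q+1-i-l}).
\end{align*}
By \cref{eq:Q_IOUP}, $Q(h)_{il}=\mathcal{O}(h^{2q+1-i-l})$ as well, so together the prediction formula \cref{eq:C^-_predict} yields the uniform bound $P^-(h)_{il}=\mathcal{O}(h^{2q+1-i-l})$. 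In particular the numerator satisfies $P^-(h)_{i1} = \mathcal{O}(h^{2q-i})$.

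For the denominator I would exploit that $A(h)P(0)A(h)^{\intercal}$ is positive semi-definite (as a congruence of a covariance matrix), so its diagonal entries are nonnegative, hence $P^-(h)_{11}\geq Q(h)_{11}$. By \cref{eq:Q_IOUP}, $Q(h)_{11} = \sigma^2 h^{2q-1}/[(2q-1)((q-1)!)^2] + \Theta(h^{2q})$, which is bounded below by $ch^{2q-1}$ for some $c>0$ and all sufficiently small $h$. Since $R(h)\geq 0$, this gives $P^-(h)_{11}+R(h) \geq c h^{2q-1}$. Dividing the two bounds yields $\Vert \beta^{(i)}(h)\Vert = \mathcal{O}(h^{2q-i}/h^{2q-1}) = \mathcal{O}(h^{1-i})$, as claimed.

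The only real subtlety is the bookkeeping of the upper-triangular (with a full last column) structure of $A(h)$ while summing the indices; once one observes that every admissible index $j$ in the product contributes the same total order $h^{2q+1-i-k}$ (the $h^{j-i}$ and $h^{-j}$ factors telescope), the proof reduces to the displayed order count plus the elementary lower bound on $Q(h)_{11}$, and no finer cancellation is needed.
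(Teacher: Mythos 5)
Your proposal is correct and follows essentially the same route as the paper's own proof: bound $|P^-(h)_{kl}|\leq Kh^{2q+1-k-l}$ by summing the orders of $A(h)$, $Q(h)$ and Assumption~\ref{ass:assumption2}, lower-bound $P^-(h)_{11}$ by the leading term of $Q(h)_{11}\geq ch^{2q-1}$ using positive semi-definiteness of $A(h)P(0)A(h)^{\intercal}$, and divide. You merely spell out more explicitly the step that the paper compresses into ``$P$ and $Q$ are PSD, hence $P^-(h)_{11}\geq Kh^{2q-1}$''.
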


\begin{proof}
  Again, w.l.o.g.~$d=1$.
  Application of the orders of $A$ and $Q$ from \cref{def:A^IOUP,eq:Q_IOUP}, the triangle inequality and \Cref{ass:assumption2} to the definition of $P^-$ in \cref{eq:C^-_predict} yields
  \begin{align}
    \left \vert P^-(h)_{k,l} \right \vert
    &\stackrel{\text{\cref{eq:C^-_predict}}}{\leq}
    \left \vert \left [ A(h)P(0)A(h)^{\intercal} \right ]_{k,l} \right \vert + \left \vert Q\rev{(h)}_{k,l} \right \vert
    \notag
    \\
    &\stackrel{\text{eqs. }\eqref{def:A^IOUP},\eqref{eq:Q_IOUP}}{\leq}
    K 
    \Bigg [
    \sum_{a=k}^{q} \sum_{b=l}^q \left \vert P(0)_{a,b}  \right \vert h^{a+b-k-l}
    \notag
    \\
    &\phantom{\stackrel{\text{eqs. }\eqref{def:A^IOUP},\eqref{eq:Q_IOUP}}{\leq}\Bigg [ } 
    + 2\theta \sum_{b=l}^{q-1} \left \vert P(0)_{q,b} \right \vert 
    \notag
    \\
    &\phantom{\stackrel{\text{eqs. }\eqref{def:A^IOUP},\eqref{eq:Q_IOUP}}{\leq}\Bigg [ }
    + \theta^2 \left \vert P(0)_{q,q} \right \vert + h^{2q+1-k-l}
    \Bigg ]
    \notag
    \\
    &\stackrel{\text{Ass.~\ref{ass:assumption2}}}{\leq}
    K \rev{[1 + \theta + \theta^2  ]} h^{2q+1-k-l}.
    \label{eq:proof_bound_C-}
  \end{align}
  \rev{Recall that $P$ and $Q$ are (positive semi-definite) covariance matrices;
  hence, $P^-(h)_{1,1} \geq Kh^{2q-1}$.
  Inserting these orders into the definition of $\beta^{(i)}$ (\cref{def_beta_MM}), recalling that $R \geq 0$, and removing the dependence on $\theta$ by reducing the fraction conclude the proof.}
  \qed
\end{proof}

\section{Local convergence rates}
\label{subsection:local_truncation_error}

With the above bounds on intermediate algorithmic quantities (involving state misalignments $\delta^{(i)}$) in place, we only need an additional assumption to proceed---via a bound on $\delta^{(i)}(0)$---to our first main result on local convergence orders of $\boldsymbol{\Psi}$.

\begin{assumption}   \label{ass:eps0_bound}
  The initial errors on the initial estimate of the $i$\textsuperscript{th} derivative $m^{(i)}(0)$ satisfy $\Vert \varepsilon^{(i)}(0) \Vert = \Vert m^{(i)}(0) - x^{(i)}(0) \Vert \leq K_0 h^{q+1-i}$.
  (This assumption is, like \Cref{ass:assumption2}, weaker than the standard assumption of exact initializations.)
\end{assumption}

\begin{lemma} \label{lemma:initial_delta_bound}
  Under \Cref{ass:f_Global_Lipschitz,ass:eps0_bound}, for all $i\in [q+1]$ and for all $h>0$, $\delta^{(i)}(0) \leq Kh^{q+1-i}.$
\end{lemma}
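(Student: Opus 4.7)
The plan is to bound $\delta^{(i)}(0)$ by inserting the exact solution and its derivative at the initial time and then using the triangle inequality, so that one term becomes an initial error (controlled by \Cref{ass:eps0_bound}) and the other term becomes a Lipschitz comparison of $f^{(i-1)}$ at two nearby points (controlled by \Cref{ass:f_Global_Lipschitz}). First, I would dispatch the trivial case $i=0$: by \cref{def:delta^i} and $\Phi_0^{(0)}(a) = a$, we have $\delta^{(0)}(nh)\equiv 0$ and so the stated bound holds immediately.

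For $i \in \{1,\dots,q\}$, I would rewrite, using \cref{eq:Phi_i_=_f_i-1} and the fact that $x^{(i)}(0) = \Phi_0^{(i)}(x_0) = f^{(i-1)}(x_0)$,
\begin{align*}
  \delta^{(i)}(0)
  &= \left\Vert m^{(i)}(0) - f^{(i-1)}\!\left(m^{(0)}(0)\right) \right\Vert \\
  &\leq \left\Vert m^{(i)}(0) - x^{(i)}(0) \right\Vert
       + \left\Vert f^{(i-1)}(x_0) - f^{(i-1)}\!\left(m^{(0)}(0)\right)\right\Vert.
\end{align*}
The first summand equals $\Vert \varepsilon^{(i)}(0) \Vert$ and is thus bounded by $K_0 h^{q+1-i}$ directly from \Cref{ass:eps0_bound}.

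For the second summand, the key observation is that $f^{(i-1)}$ is globally Lipschitz for every $i\leq q+1$. This can be verified by a short induction on $i$: $f^{(0)}=\operatorname{id}$ is trivially Lipschitz, and $f^{(i)} = (\nabla_x f^{(i-1)})\cdot f$ is a product involving $f$ and its derivatives of order up to $i-1$; since \Cref{ass:f_Global_Lipschitz} provides both boundedness and Lipschitz continuity of all such factors (as long as $i-1\leq q$), the product rule yields the Lipschitz property of $f^{(i)}$ with some constant, which I subsume into the generic $K$. Thus, applying this Lipschitz estimate and then \Cref{ass:eps0_bound} with $i=0$ gives
\begin{align*}
  \left\Vert f^{(i-1)}(x_0) - f^{(i-1)}\!\left(m^{(0)}(0)\right)\right\Vert
  \leq K \left\Vert x_0 - m^{(0)}(0) \right\Vert
  \leq K K_0 h^{q+1}
  \leq K h^{q+1-i}
\end{align*}
for sufficiently small $h$ (absorbing the extra $h^i$ factor into the constant). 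Combining the two summand bounds yields the claim. The main (minor) obstacle is the bookkeeping in the induction for the Lipschitz property of $f^{(i-1)}$, specifically checking that the orders of derivatives of $f$ required never exceed $q$; once that is verified, the rest is a direct triangle-inequality argument.
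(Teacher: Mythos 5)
Your proof takes essentially the same route as the paper's: the same triangle-inequality decomposition of $\delta^{(i)}(0)$ into $\Vert \varepsilon^{(i)}(0) \Vert$ and a Lipschitz difference of $f^{(i-1)}$, then \Cref{ass:eps0_bound} applied to both summands. You are in fact a bit more careful than the paper, which writes the second summand's bound as $L\Vert\varepsilon^{(0)}(0)\Vert$ as though $f^{(i-1)}$ were $L$-Lipschitz, whereas (as you correctly observe) the Lipschitz constant of $f^{(i-1)} = (\nabla_x f^{(i-2)})\cdot f$ is some $K$ depending on $L$ and $q$, obtained from the product rule and \Cref{ass:f_Global_Lipschitz}; your sketch of the induction checking that no derivative order exceeds $q$ is the right bookkeeping. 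The only cosmetic mismatch is your caveat ``for sufficiently small $h$'' when replacing $h^{q+1}$ by $h^{q+1-i}$; the lemma is stated for all $h>0$ and the paper silently makes the same replacement, so this is really an implicit $h\le 1$ convention shared by both proofs rather than a gap in yours.
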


\begin{proof}
  The claim follows, using \Cref{ass:f_Global_Lipschitz,ass:eps0_bound}, from
  \begin{align}
    \label{eq:delta_bound_in_proof}
    \delta^{(i)}(0)
    \leq
    &\underbrace{\left \Vert m^{(i)}(0) - x^{(i)}(0) \right \Vert}_{= \Vert \varepsilon^{(i)}(0) \Vert \leq K_0 h^{q+1-i}}
    \notag
    \\
    &+ \underbrace{\left \Vert f^{(i-1)} \left( x^{(0)}(0) \right) - f^{(i-1)} \left( m^{(0)}(0) \right) \right \Vert}_{\leq L \Vert \varepsilon^{(0)}(0) \Vert \leq L K_0 h^{q+1}}.
  \end{align}
  \qed
\end{proof}
Now, we can bound the local truncation error $\varepsilon^{(0)}(h)$ as defined in \cref{eq:def_varpepsilon}.
\begin{thm}[Local Truncation Error]   \label{lemma:Psi_with_delta}
  Under the \Cref{ass:f_Global_Lipschitz,ass:assumption2,ass:eps0_bound} and for all sufficiently small $h>0$,
  \begin{align}
    \label{bound:local_varepsilon}
    \left \Vert \varepsilon^{(0)}(h) \right \Vert
    \leq
    \vertiii{\boldsymbol{\varepsilon} (h) }_h
    \leq
    K \left [ 1 + \rev{\theta \left \Vert m^{(q)}(0)  \right \Vert} \right ] h^{q+1}.
  \end{align}
\end{thm}

\begin{proof}
  By the triangle inequality for $\vertiii{\quark}_h$ and subsequent application of \Cref{lemma:flow_map_regularity} and \Cref{ass:eps0_bound} to the second summand of the  resulting inequality, we obtain
  \begin{align}
    \vertiii{\boldsymbol{\varepsilon}(h)}_h
    \leq
    &\underbrace{\vertiii{\boldsymbol{\Psi}_{P(0),h} \left ( \boldsymbol{m}(0) \right )  - \boldsymbol{\Phi}_h \left (  x^{(0)}(0) \right )  }_h}_{= \sum_{i=0}^q h^i \Delta^{(i)}(h), \text{ by \cref{def:Delta}} }
    \notag
    \\
    &+
    \underbrace{\vertiii{\boldsymbol{\Phi}_h \left (x^{(0)}(0) \right ) - \boldsymbol{\Phi}_h \left ( m^{(0)}(0)  \right ) }_h}_{\leq (1+Kh) \Vert \varepsilon^{(0)}(0) \Vert \leq Kh^{q+1} } .
    \label{eq:proof_bound_bsymb_varepsilon}
  \end{align}
  The remaining bound on $\Delta^{(i)}(h)$, for all $i \in [q+1]$ and sufficiently small $h>0$, is obtained by insertion of the bounds from \Cref{lemma:Delta^-i,lemma:r,lemma:Kalman_gain_local_order} (in the case of $n=0$), into \cref{def:Delta^-}:
  \begin{align}
    \notag
    \Delta^{(i)}(h)
    &\leq
    K \left [ 1 + \rev{\theta \left \Vert m^{(q)}(0)  \right \Vert} \right ] h^{q+1-i}
    \\
    &\phantom{\leq}
    +
    \rev{K \sum_{k=1}^q \frac{h^{k-1}}{(k-1)!} \delta^{(k)}(nh)}
    \\
    &\stackrel{\text{Lemma } \ref{lemma:initial_delta_bound}}{\leq}
    K \left [ 1 + \rev{\theta \left \Vert m^{(q)}(0)  \right \Vert} \right ] h^{q+1-i}
    .
    \label{bound:Delta}
  \end{align}
  Insertion of \cref{bound:Delta} into \cref{eq:proof_bound_bsymb_varepsilon} and $\Vert \varepsilon^{(0)}(h) \Vert \leq \vertiii{\boldsymbol{\varepsilon}(h)}_h$ (by \cref{def:h_norm}) concludes the proof.
  \qed
\end{proof}

\begin{remark}
  \label{remark_on_local_convergence_theorem}
  Theorem \ref{lemma:Psi_with_delta} establishes a bound of order $h^{q+1}$ on the local truncation error $\varepsilon^{(0)}(h)$ on $x(h)$ after one step $h$.
  Moreover, by the definition \cref{def:h_norm} of $\vertiii{\quark}_h$, this theorem also implies additional bounds of order $h^{q+1-i}$ on the error $\varepsilon^{(i)}(h)$ on the $i$\textsuperscript{th} derivative $x^{(i)}(h)$ for all $i \in [q+1]$.
  Such derivative bounds are (to the best of our knowledge) not available for classical numerical solvers, since they do not explicitly model the derivatives in the first place.
  These bounds could be be useful for subsequent computations based on the ODE trajectory \citep{HenOsbGirRSPA2015}.

  \rev{
  Unsurprisingly, as the mean prediction (recall \cref{A_prediction}) deviates from a pure $q$\textsuperscript{th} order Taylor expansion by $K\theta \Vert m^{(q)}(0) \Vert h^{q+1}$ for an IOUP prior (i.e.~$\theta > 0$ in \cref{eq:a_restriction_to_IOUP/IBM}), the constant in front of the local $h^{q+1}$ convergence rate depends on both $\theta$ and $m^{(q)}(0)$ in the IOUP case.
  A global analysis for IOUP is therefore more complicated than for IBM:
  Recall from \cref{A_prediction} that, for $q=1$, the mean prediction for $x((n+1)h)$ is
  \begin{align}
    \label{IOUP_prediction_q1}
    &\begin{pmatrix}
      m^{-,(0)}((n+1)h)
      \\
      m^{-,(1)}((n+1)h)
    \end{pmatrix}
    \quad
    \stackrel{\text{\cref{A_prediction}}}{=}
    \\
    &\phantom{=}
    \begin{pmatrix}
      m^{(0)}(nh) + h m^{(1)}(nh) - \theta \left [ \frac{h^2}{2!} + \mathcal O(h^3) \right ] m^{(1)}(nh)
      \\
      e^{-\theta h} m^{-,(1)}(nh)
    \end{pmatrix},
    \notag
  \end{align}
  which pulls both $m^{-,(0)}$ and $m^{-,(1)}$ towards zero (or some other prior mean) compared to \rev{the prediction given by its Taylor expansion} for $\theta = 0$.
  While this is useful for ODEs converging to zero, such as $\dot x = -x$, it is problematic for diverging ODEs, such as $\dot x = x$ \citep{magnani2017}.
  As shown in Theorem \ref{lemma:Psi_with_delta}, this effect is asymptotically negligible for local convergence, but it might matter globally and, therefore, might necessitate stronger assumptions on $f$ than \Cref{ass:f_Global_Lipschitz}, such as a bound on $\Vert f \Vert_{\infty}$ which would globally bound $\{ y(nh);\ n=0,\dots,T/h \}$ and thereby $\{m^{(1)}(nh);\ n=0,\dots,T/h\}$ in \cref{IOUP_prediction_q1}.
  It is furthermore conceivable that a global bound for IOUP would depend on the relation between $\theta$ and $\Vert f \Vert_{\infty}$ in a nontrivial way.
  The inclusion of IOUP ($\theta > 0$) would hence complicate the below proofs further.
  Therefore, we restrict the following first global analysis to IBM ($\theta = 0$).  }
\end{remark}

\section{Global analysis}
\label{subsection:Global_truncation_error}

As \rev{explained} in \Cref{remark_on_local_convergence_theorem}, we only consider the case of the IBM prior\rev{, i.e.~$\theta = 0$,} in this section.
Moreover, we restrict our analysis to $q=1$ in this first global analysis.
Although we only have definite knowledge for $q=1$, we believe that the convergence rates might also hold for higher $q \in \mathbb{N}$---which we experimentally test in \Cref{subsec:exp_original_WPD}.
Moreover, we believe that proofs analogous to the below proofs might work out for higher $q \in \mathbb{N}$ and that deriving a generalized version of Proposition \ref{proposition:steady_states_global} for higher $q$ is the bottleneck for such proofs.
(See \Cref{sec:discussion} for a discussion of these restrictions.)

While, for local convergence, all noise models $R$ yield the same convergence rates in Theorem \ref{lemma:Psi_with_delta}, it is unclear how the order of $R$ in $h$ (as described in \Cref{subsec:measurement_noise}) affects global convergence rates:
E.g.,~for the limiting case $R \equiv Kh^0$, the steady-state Kalman gains $\boldsymbol{\beta}^{\infty}$ would converge to zero (see \cref{lemma:steady_states_values_v,lemma:steady_states_values_vi} below) for $h \to 0$, and hence the evaluation of $f$ would not be taken into account---yielding a filter $\boldsymbol{\Psi}$ which assumes that the evaluations of $f$ are equally off, regardless of $h>0$, and eventually just extrapolates along the prior \rev{without global convergence} of the posterior mean $\boldsymbol{m}$. 
For the opposite limiting case $R\equiv \lim_{p \to \infty} K h^p \equiv 0$, it has already been shown in \citet[Proposition 1 and Theorem 1]{schober2019} that---in the steady state and for $q=1,2$---the filter $\boldsymbol{\Psi}$ inherits global convergence rates from known multistep methods in Nordsieck form \citet{nordsieck1962numerical}.
\rev{To explore a more general noise model,} we assume a fixed noise model \rev{$R \equiv K h^p$} with arbitrary order $p$.

In the following, we analyse how small $p$ can be in order for $\boldsymbol{\Psi}$ to exhibit fast global convergence (cf.~the similar role of the order $p$ of perturbations in \citet[Assumption 1]{conrad_probability_2017} and \citet[Assumption 2.2]{AbdulleGaregnani17}).
In light of Theorem \ref{lemma:Psi_with_delta}, the highest possible global convergence rate is $\BO(h)$---which will indeed be obtained for all \rev{$p \in [1,\infty]$} in Theorem \ref{theorem:GlobalTruncation}.
Since every extrapolation step of $\boldsymbol{\Psi}$ from $t$ to $t+h$ depends not only on the current state, but also on the covariance matrix $P(t)$---which itself depends on all previous steps---$\Psi$ is neither a single-step nor a multistep method.
Contrary to \citet{schober2019}, we do not restrict our theoretical analysis to the steady-state case, but provide our results under the weaker \Cref{ass:assumption2,ass:eps0_bound} that were already sufficient for local convergence in Theorem \ref{lemma:Psi_with_delta}---which is made possible by the bounds \cref{ineq:ineq:steady_state_lemma_v,ineq:ineq:steady_state_lemma_vi} in Proposition \ref{proposition:steady_states_global}.

\subsection{Outline of global convergence proof}
\label{subsec:outline_of_global_convergence_proof}

The goal of the following sequence of proofs in \Cref{subsection:Global_truncation_error} is Theorem \ref{theorem:GlobalTruncation}.
It is proved by a special version of the discrete Gr\"onwall inequality \citep{Clark87} whose prerequisite is provided in \Cref{lemma:development_epsilon}.
This \Cref{lemma:development_epsilon} follows from \Cref{lemma:flow_map_regularity} (on the regularity of the flow map $\boldsymbol{\Phi}_t$) as well as \Cref{lemma:truncation_error_global_bound} which provides a bound on the maximal increment of the numerical error stemming from local truncation errors.
For the proof of \Cref{lemma:truncation_error_global_bound}, we first have to establish

\begin{enumerate}[(i)]
  \item global bounds on the Kalman gains $\beta^{(0)}$ and $\beta^{(1)}$ by the inequalities \cref{ineq:ineq:steady_state_lemma_v,ineq:ineq:steady_state_lemma_vi} in Proposition \ref{proposition:steady_states_global}, and
  \item a global bound on the state misalignment $\delta^{(1)}$ in \Cref{lemma:delta_global_bound}.
\end{enumerate}

In \Cref{subsec:Global_bounds_on_Kalman_gains,subsec:global_bounds_on_state_misalignments,subsec:prerequisite_for_discrete_Gronwall}, we will collect these inequalities in the order of their numbering to subsequently prove global convergence in \Cref{subsec:Global_convergence_rates}.

\subsection{Global bounds on Kalman gains}
\label{subsec:Global_bounds_on_Kalman_gains}

Since we will analyse the sequence of \rev{covariance matrices and Kalman gains using contractions in Proposition \ref{proposition:steady_states_global}}, we first introduce the following \rev{generalization of Banach fixed-point theorem (BFT)}.


\begin{lemma}
  \label{lemma:sequence_of_contractions}
  Let $(\mathcal{X},d)$ be a non-empty complete metric space,
  $T_n \colon \mathcal{X} \to \mathcal{X}$, $n\in \mathbb N$, a sequence of $L_n$-Lipschitz continuous contractions with $\sup_n L_n \leq \bar L < 1$.
  Let $u_n$ be the fixed point of $T_n$, as \rev{given} by \rev{BFT}, and let $\lim_{n \to \infty} u_n = u^{\ast} \in \mathcal{X}$.
  Then, for all $x_0 \in \mathcal{X}$, the recursive sequence $x_n \defeq T_n(x_{n-1})$ converges to $u^{\ast}$ as $n \to \infty$.
\end{lemma}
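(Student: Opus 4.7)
The plan is to prove convergence of $x_n \to u^{\ast}$ via the triangle inequality split
\begin{equation*}
d(x_n, u^{\ast}) \leq d(x_n, u_n) + d(u_n, u^{\ast}),
\end{equation*}
where the second term vanishes by hypothesis, so the entire task reduces to showing $d(x_n, u_n) \to 0$.

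For the remaining term, I would exploit that $u_n$ is a fixed point of $T_n$ and use Lipschitz contractivity to obtain a one-step recursion. Writing $a_n \defeq d(x_n, u_n)$, the triangle inequality gives $d(x_{n-1}, u_n) \leq d(x_{n-1}, u_{n-1}) + d(u_{n-1}, u_n)$, so applying $T_n$ and using $u_n = T_n(u_n)$ yields
\begin{equation*}
a_n = d(T_n(x_{n-1}), T_n(u_n)) \leq \bar L\, a_{n-1} + \bar L\, b_n,
\qquad b_n \defeq d(u_{n-1}, u_n).
\end{equation*}
Since the convergent sequence $(u_n)$ is Cauchy, $b_n \to 0$. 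Unrolling the recursion gives
\begin{equation*}
a_n \leq \bar L^n a_0 + \sum_{k=1}^n \bar L^{\,n-k+1}\, b_k.
\end{equation*}
The first summand vanishes as $n \to \infty$ because $\bar L < 1$.

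The main (and only real) obstacle is bounding the tail sum $\sum_{k=1}^n \bar L^{n-k+1} b_k$, which is a convolution of a geometrically decaying kernel against a null sequence. I would handle it by an $\varepsilon/2$--splitting argument: given $\varepsilon > 0$, pick $K$ large enough that $b_k < \varepsilon(1-\bar L)/(2\bar L)$ for all $k \geq K$, bounding the tail $\sum_{k=K+1}^n \bar L^{n-k+1} b_k$ by a geometric series whose value is at most $\varepsilon/2$; then, with $K$ fixed, choose $n$ so large that $\bar L^{n-K+1}\sum_{k=1}^K b_k < \varepsilon/2$, controlling the head. Combining these bounds with the first-term estimate shows $a_n \to 0$, and the triangle inequality above closes the argument.
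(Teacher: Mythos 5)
Your proof is correct, and it takes a genuinely different (and arguably cleaner) route than the paper's. The paper introduces an auxiliary orbit $\tilde u_n$ started from $u^{\ast}$ and uses the three-term split $d(u^{\ast},x_n)\leq d(u^{\ast},u_n)+d(u_n,\tilde u_n)+d(\tilde u_n,x_n)$; the last term decays geometrically, and the middle term $a_n=d(u_n,\tilde u_n)$ satisfies a recursion $a_n\leq \bar L a_{n-1}+b_{n-1}$ that the paper resolves by comparing against a family of majorizing sequences $a_n^{(m)}\defeq\bar L a_{n-1}^{(m)}+b_m$ and applying BFT a second time to compute $\lim_n a_n^{(m)}=b_m/(1-\bar L)$. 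You instead bound $d(x_n,u_n)$ directly via the two-term split, derive essentially the same recursion $a_n\leq \bar L a_{n-1}+\bar L b_n$, unroll it to a geometric convolution $\sum_k \bar L^{\,n-k+1}b_k$, and finish with a standard $\varepsilon/2$ head--tail estimate on that convolution against the null sequence $(b_k)$. This avoids the auxiliary orbit and the second invocation of BFT entirely. The two approaches are conceptually equivalent (both reduce to the fact that the recursion $a_n\leq \bar L a_{n-1}+c_n$ with $c_n\to 0$ forces $a_n\to 0$), but yours is more elementary and self-contained. One minor point to tidy: since $T_n$ is indexed from $n=1$, the unrolled bound should start from $a_1$ rather than $a_0$ (there is no $u_0$), but this does not affect the argument.
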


\begin{proof}
  \rev{See \Cref{Appendix:Banach}.}
  \qed
\end{proof}

In the following, we will assume that $T$ is a multiple of $h$.

\begin{prop}
  \label{proposition:steady_states_global}
  \rev{For constant $R \equiv K h^p$ with $p \in [0,\infty]$}, the unique (attractive) steady states for the following quantities are
  \begin{align}
    \label{lemma:steady_states_values_i}
    &P_{11}^{-,\infty}
    \defeq
    \lim_{n \to \infty} P_{11}^-(nh)
    \\
    &\phantom{P_{11}^{-,\infty}}
    =
    \frac 12 \left (\sigma^2 h + \sqrt{4\sigma^2 R h + \sigma^4 h^2} \right ),
    \notag
    \\
    \label{lemma:steady_states_values_ii}
    &P_{11}^{\infty}
    \defeq
    \lim_{n \to \infty} \rev{P_{11}}(nh)
    \\
    &\phantom{P_{11}^{\infty}}
    =
    \frac{\left( \sigma^2 h + \sqrt{4\sigma^2 R h + \sigma^4 h^2}     \right )R}{\sigma^2 h + \sqrt{4\sigma^2 R h + \sigma^4 h^2} + 2R},
    \notag
    \\
    \label{lemma:steady_states_values_iii}
    &P_{01}^{-,\infty}
    \defeq
    \lim_{n \to \infty} P_{01}^{-}(nh)
    \\
    &\phantom{P_{01}^{-,\infty}}
    =
    \frac {\sigma^4 h^2 + (2R + \sigma^2 h) \sqrt{4\sigma^2 Rh + \sigma^4 h^2} + 4R\sigma^2 h } {2(\sigma^2 h + \sqrt{4\sigma^2 R h + \sigma^4 h^2})} h,
    \notag
    \\
    \label{lemma:steady_states_values_iv}
    &P_{01}^{\infty}
    \defeq
    \lim_{n \to \infty} P_{01}(nh)
    \\
    &\phantom{P_{01}^{\infty}}
    =
    \frac
    {R\sqrt{4R\sigma^2  h + \sigma^4 h^2} }
    {\sigma^2 h + \sqrt{4\sigma^2 R h + \sigma^4 h^2}}
    h,
    \notag
    \\
    \label{lemma:steady_states_values_v}
    &\beta^{\infty,(0)}
    \defeq
    \lim_{n \to \infty} \beta^{(0)}(nh)
    \\
    &\phantom{\beta^{\infty,(0)}}
    =
    \frac
    {\sqrt{4R\sigma^2  h + \sigma^4 h^2} }
    {\sigma^2 h + \sqrt{4\sigma^2 R h + \sigma^4 h^2}}
    h,
    \notag
    \qquad \text{and}
    \\
    \label{lemma:steady_states_values_vi}
    &\beta^{\infty,(1)}
    \defeq
    \lim_{n \to \infty} \beta^{(1)}(nh)
    \\
    &\phantom{\beta^{\infty,(1)}}
    =
    \frac {\sigma^2 h + \sqrt{4\sigma^2 R h + \sigma^4 h^2}} { \sigma^2 h + \sqrt{4\sigma^2 R h + \sigma^4 h^2} + 2R }.
    \notag
  \end{align}
  \rev{If furthermore \Cref{ass:assumption2} holds, then,} for all sufficiently small $h>0$,
  \begin{align}
    \label{ineq:steady_state_lemma_i}
    \max_{n \in [T/h + 1]}
    P_{11}^{-}(nh)
    &\leq
    Kh^{1 \wedge \frac{p+1}{2}},
    \\
    \label{ineq:steady_state_lemma_ii}
    \max_{n \in [T/h + 1]}
    P_{11}(nh)
    &\leq
    Kh^{p \vee \frac{p+1}2 },
    \\
    \label{ineq:steady_state_lemma_iv}
    \max_{n \in [T/h + 1]}
    \left \Vert P_{01}(nh) \right \Vert
    &\leq
    Kh^{p+1},
    \\
    \label{ineq:ineq:steady_state_lemma_v}
    \max_{n \in [T/h + 1]}
    \left \Vert \beta^{(0)}(nh)  \right \Vert
    &\leq
    Kh,
    \qquad \text{and}
    \\
    \label{ineq:ineq:steady_state_lemma_vi}
    \max_{n \in [T/h + 1]}
    \left \Vert 1 - \beta^{(1)}(nh)    \right \Vert
    &\leq
    Kh^{(p-1) \vee 0}.
  \end{align}
  All of these bounds are sharp in the sense that they fail for any higher order in the exponent of $h$.
\end{prop}
\begin{remark}
  The recursions for $P(nh)$ and $P^-(nh)$ given by \cref{eq:C^-_predict,eq:C_update_MM} follow a discrete algebraic Riccati equation (DARE)---a topic studied in many related settings \citep{LancasterRodman1995}.
  While the asymptotic behavior \cref{lemma:steady_states_values_i} of the completely detectable state $X^{(1)}$ can also be obtained using classical filtering theory \citep[Chapter 4.4]{anderson1979}, the remaining statements of Proposition \ref{proposition:steady_states_global} also concern the undetectable state $X^{(0)}$ and are, to the best of our knowledge, not directly obtainable from existing theory on DAREs or filtering (which makes the following proof necessary).
  Note that, in the special case of no measurement noise ($R\equiv 0$), \cref{lemma:steady_states_values_v,lemma:steady_states_values_vi} yield the equivalence of the filter in the steady state with the P(EC)1 implementation of the trapezoidal rule, which was previously shown in \citet[Proposition 1]{schober2019}.
  For future research, it would be interesting to examine whether insertion of positive choices of $R$ into \cref{lemma:steady_states_values_v,lemma:steady_states_values_vi} can reproduce known methods as well.
\end{remark}

\begin{proof}
  \rev{See \Cref{Appendix:Proposition_Proof}.}
  \qed
\end{proof}

\subsection{Global bounds on state misalignments} \label{subsec:global_bounds_on_state_misalignments}

\rev{For the following estimates, we restrict the choice of $p$ to be larger than $q=1$.

\begin{assumption}
  \label{ass:R}
  The noise model is chosen to be $R\equiv Kh^p$, for $p \in [q,\infty] = [1,\infty]$, where $Kh^{\infty} : = 0$.
\end{assumption}}

Before bounding the added deviation of $\boldsymbol{\Psi}$ from the flow \rev{$\boldsymbol{\Phi}$ per step}, a global bound on the state misalignments defined in \cref{def:delta^i} is necessary.
\rev{The result of the following lemma is discussed in \Cref{appendix:section:delta_convergence}.}

\begin{lemma}
  \label{lemma:delta_global_bound}
  Under \Cref{ass:f_Global_Lipschitz,ass:assumption2,ass:eps0_bound,ass:R} and for all sufficiently small $h>0$,
  \begin{align}
    \label{ineq:delta}
    \max_{n \in [T/h + 1]} \delta^{(1)}(nh)
    \leq
    \rev{Kh}.
  \end{align}
\end{lemma}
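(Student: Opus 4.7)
The plan is to iterate a contractive recursion for $\delta^{(1)}(nh)$. With $q=1$ and the IBM prior, the Kalman update for the derivative estimate reads
\[
m^{(1)}((n+1)h) = \bigl(1-\beta^{(1)}((n+1)h)\bigr) m^{(1)}(nh) + \beta^{(1)}((n+1)h)\, f\bigl(m^{-,(0)}((n+1)h)\bigr),
\]
so subtracting $f(m^{(0)}((n+1)h))$ on both sides, inserting $\pm f(m^{(0)}(nh))$ and $\pm f(m^{-,(0)}((n+1)h))$, and using the Lipschitz continuity of $f$ yields
\[
\delta^{(1)}((n+1)h) \leq \bigl|1-\beta^{(1)}((n+1)h)\bigr| \bigl[\delta^{(1)}(nh) + L\, \bigl|m^{(0)}((n+1)h) - m^{(0)}(nh)\bigr|\bigr] + L\, \bigl|\beta^{(0)}((n+1)h)\bigr|\, \bigl|r((n+1)h)\bigr|.
\]

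Next, I would control each factor on the right. By \cref{ineq:ineq:steady_state_lemma_v} in \Cref{proposition:steady_states_global}, $|\beta^{(0)}((n+1)h)| \leq Kh$; by \Cref{lemma:r} (with $q=1$ and $\theta=0$), $|r((n+1)h)| \leq K(h + \delta^{(1)}(nh))$; and $|m^{(0)}((n+1)h) - m^{(0)}(nh)| \leq h|m^{(1)}(nh)| + |\beta^{(0)}|\,|r|$, which is $\mathcal O(h)$ as long as $|m^{(1)}(nh)|$ stays uniformly bounded. The latter is obtained by a bootstrap: once $\delta^{(1)}(nh) \leq \tilde K h$ is assumed inductively, \Cref{ass:f_Global_Lipschitz} (in particular the boundedness of $f$) gives $|m^{(1)}(nh)| \leq \|f\|_\infty + \tilde K h$. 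Substituting back, the inequality collapses to a scalar recursion
\[
\delta^{(1)}((n+1)h) \leq \bigl|1-\beta^{(1)}((n+1)h)\bigr|\, \delta^{(1)}(nh) + Ch,
\]
with $\delta^{(1)}(0) \leq Kh$ supplied by \Cref{lemma:initial_delta_bound}.

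The main obstacle is handling the prefactor $|1-\beta^{(1)}((n+1)h)|$ in the borderline case $p=1$, where the generic bound \cref{ineq:ineq:steady_state_lemma_vi} from \Cref{proposition:steady_states_global} only gives $|1-\beta^{(1)}| \leq K$ and thus does not immediately yield a contraction. To overcome this I would return to the closed form \cref{lemma:steady_states_values_vi}: substituting $R = Kh$ one verifies that $\beta^{\infty,(1)}$ tends to a positive constant strictly less than $1$, independent of $h$, and the attractivity argument already used to establish \Cref{proposition:steady_states_global} then shows $|1-\beta^{(1)}(nh)| \leq c$ for some fixed $c \in (0,1)$, for all $n$ and all sufficiently small $h$. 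For $p>1$ the same bound \cref{ineq:ineq:steady_state_lemma_vi} directly provides $|1-\beta^{(1)}| \leq Kh^{p-1} \to 0$, which is even stronger.

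With a uniform bound $|1-\beta^{(1)}((n+1)h)| \leq c < 1$ in place, iterating the scalar recursion gives $\delta^{(1)}(nh) \leq c^n \delta^{(1)}(0) + \frac{Ch}{1-c} \leq K h$ uniformly in $n \in [T/h+1]$. This both closes the induction (confirming the bootstrap assumption on $|m^{(1)}(nh)|$) and establishes the claim. I expect the closed-form manipulation needed to pin down $c<1$ in the $p=1$ case to be the only genuinely delicate part; the rest is bookkeeping on Lipschitz and Kalman-gain estimates already provided earlier in the paper.
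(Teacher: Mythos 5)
Your plan is close in spirit to the paper's---both establish a scalar recursion for $\delta^{(1)}(nh)$ that is contractive for small $h$ and iterate it, using the Kalman-gain bounds from \Cref{proposition:steady_states_global} and the residual bound from \Cref{lemma:r}---but there is a genuine gap in how you set up the recursion.

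The problem is the term $\|m^{(0)}((n+1)h)-m^{(0)}(nh)\|$. By telescoping $f(m^{(0)}((n+1)h))$ against $f(m^{(0)}(nh))$, you compare the vector field at two points a full time step apart, and the size of that step is $h m^{(1)}(nh) + \beta^{(0)}r$, so you need $|m^{(1)}(nh)|$ uniformly bounded. You try to get this from ``the boundedness of $f$'' via \Cref{ass:f_Global_Lipschitz}, but that assumption does \emph{not} give $\|f\|_\infty<\infty$: the formal statement only bounds $\|D^\alpha f\|_\infty$ for $1\leq\sum_i\alpha_i\leq q$, and for $\alpha=0$ it only demands Lipschitz continuity. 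Linear systems $\dot x = \Lambda x$ (one of the paper's own test cases) satisfy the assumption with unbounded $f$, so your bootstrap $|m^{(1)}(nh)|\leq\|f\|_\infty+\tilde K h$ has no footing. Without it, bounding $m^{(1)}$ would require bounding $m^{(0)}$, which would require the global convergence result that \Cref{lemma:delta_global_bound} is a prerequisite for---a circularity.

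The paper sidesteps this entirely by choosing a different pivot: it writes $\delta^{(1)}(nh) \leq J_1 + J_2$ with $J_1 = \|m^{(1)}(nh)-f(m^{-,(0)}(nh))\|$ and $J_2=\|f(m^{-,(0)}(nh))-f(m^{(0)}(nh))\|$. Because $m^{-,(0)}(nh)$, $m^{(0)}(nh)$, and $m^{(1)}(nh)$ all live at the \emph{same} time index, $J_1$ collapses exactly to $|1-\beta^{(1)}(nh)|\,\|r(nh)\|$ and $J_2\leq L\,|\beta^{(0)}(nh)|\,\|r(nh)\|$, so the whole increment is controlled by $\|r(nh)\|$ alone, with no inter-step Lipschitz comparison and hence no need for a uniform bound on $m^{(1)}$. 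Plugging in \Cref{lemma:r} then closes the recursion purely in $\delta^{(1)}$. If you rebuild your argument with this pivot, the $\|f\|_\infty$ issue disappears.

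On the borderline $p=1$ case you are right that one must check $|1-\beta^{(1)}(nh)|$ is bounded by a constant strictly below $1$, but your route via the closed-form steady state \cref{lemma:steady_states_values_vi} plus an attractivity argument is heavier than needed. The paper simply uses that $P^-_{11}(nh)\geq\sigma^2 h$ for every $n$ (not just in the steady state), giving $|1-\beta^{(1)}(nh)| = R/(P^-_{11}(nh)+R) \leq R/(\sigma^2 h + R)$, which for $R=K_R h$ equals $K_R/(\sigma^2+K_R)<1$ uniformly in $n$ and $h$. No limit argument is required.
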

\begin{proof}
  See \Cref{appendix:lemma_delta}.
  \qed
\end{proof}
See \Cref{lemma:delta_global_bound} for a experimental demonstration of \cref{eq:delta_bound_in_proof}.

\subsection{Prerequisite for discrete Gr\"onwall inequality}
\label{subsec:prerequisite_for_discrete_Gronwall}

\rev{Equipped with the above bounds, we can now prove a bound on the maximal increment of the numerical error stemming from local truncation errors which is needed to prove \cref{ineq:lemma:development_epsilon}, the prerequisite for the discrete Gr\"onwall inequality.}

\begin{lemma}  \label{lemma:truncation_error_global_bound}
  Under \Cref{ass:f_Global_Lipschitz,ass:assumption2,ass:eps0_bound,ass:R} and for all sufficiently small $h>0$,
  \begin{align}
      \label{eq:lemma_global_truncation_error}
      \max_{n \in [T/h + 1]} &\vertiii{\boldsymbol{\Psi}_{P(nh),h}\left(\boldsymbol{m}(nh)  \right) - \boldsymbol{\Phi}_h \left( m^{(0)}(nh)    \right)}_h
      \notag
      \\
      &\hspace{.5cm}\leq
      \rev{Kh^2}.
    \end{align}
\end{lemma}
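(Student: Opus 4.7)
The plan is to bound the left-hand side by expanding it in the definition \cref{def:h_norm} of the norm $\vertiii{\quark}_h$ and then to control each of the resulting terms using the auxiliary per-step inequalities developed in \Cref{sec:auxiliary_bounds_on_intermediate_quantities} together with the new global bounds from \Cref{proposition:steady_states_global} and \Cref{lemma:delta_global_bound}. Concretely, by \cref{def:Delta}, the left-hand side equals $\Delta^{(0)}((n+1)h) + h\,\Delta^{(1)}((n+1)h)$, and by \cref{def:Delta^-} each $\Delta^{(i)}((n+1)h)$ splits as
\begin{align*}
  \Delta^{(i)}((n+1)h)
  \leq
  \Delta^{-(i)}((n+1)h)
  +
  \left\Vert \beta^{(i)}((n+1)h) \right\Vert \left\Vert r((n+1)h) \right\Vert.
\end{align*}
Since the present section is restricted to the IBM prior ($\theta = 0$) and $q=1$, \Cref{lemma:Delta^-i} collapses into
$\Delta^{-(0)}((n+1)h) \leq K h^{2} + h\,\delta^{(1)}(nh)$ and $\Delta^{-(1)}((n+1)h) \leq K h + \delta^{(1)}(nh)$,
while \Cref{lemma:r} gives $\Vert r((n+1)h)\Vert \leq K h + K\,\delta^{(1)}(nh)$.

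Next I would feed in the global bounds uniformly in $n$. By \Cref{lemma:delta_global_bound}, $\delta^{(1)}(nh) \leq K h$ for all $n$, which upgrades the three inequalities above to $\Delta^{-(0)}((n+1)h) \leq K h^{2}$, $\Delta^{-(1)}((n+1)h) \leq K h$, and $\Vert r((n+1)h)\Vert \leq K h$. For the Kalman gains, \Cref{proposition:steady_states_global}, specifically \cref{ineq:ineq:steady_state_lemma_v}, yields $\Vert \beta^{(0)}(nh) \Vert \leq K h$, and \cref{ineq:ineq:steady_state_lemma_vi} together with the triangle inequality gives $\Vert \beta^{(1)}(nh) \Vert \leq 1 + K h^{(p-1)\vee 0} \leq K$ for all $h$ sufficiently small (using \Cref{ass:R}, i.e.~$p \geq 1$).

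Assembling these estimates,
\begin{align*}
  \Delta^{(0)}((n+1)h) &\leq K h^{2} + (K h)(K h) \leq K h^{2},
  \\
  \Delta^{(1)}((n+1)h) &\leq K h + K(K h) \leq K h,
\end{align*}
uniformly in $n \in [T/h + 1]$, so that
\begin{align*}
  \vertiii{\boldsymbol{\Psi}_{P(nh),h}(\boldsymbol{m}(nh)) - \boldsymbol{\Phi}_{h}(m^{(0)}(nh))}_h
  =
  \Delta^{(0)}((n+1)h) + h\,\Delta^{(1)}((n+1)h)
  \leq
  K h^{2},
\end{align*}
as desired. The main conceptual step is the interplay between the $\beta^{(0)}$-contribution and the residual $r$: individually each is only $\mathcal{O}(h)$, but their product produces the extra factor of $h$ that is essential for the $\mathcal{O}(h^{2})$ bound on $\Delta^{(0)}$; this is precisely where the global Kalman gain estimate \cref{ineq:ineq:steady_state_lemma_v}, valid for every $p \geq 1$, is used. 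The primary technical obstacle, already handled in the preceding results, is the uniformity in $n$: without the global bounds on $\delta^{(1)}$ and on $\beta^{(0)}$ one would only recover the local rate and the $Kh^{2}$ constant could potentially blow up along the trajectory; the role of \Cref{proposition:steady_states_global} and \Cref{lemma:delta_global_bound} is exactly to rule this out.
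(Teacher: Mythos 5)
Your proof is correct and takes essentially the same route as the paper's: decompose the $\vertiii{\quark}_h$-norm into the two components $\Delta^{(0)} + h\Delta^{(1)}$, split each via \cref{def:Delta^-}, feed in \Cref{lemma:Delta^-i,lemma:r,lemma:delta_global_bound} and the global Kalman-gain bound \cref{ineq:ineq:steady_state_lemma_v}, and let the product $\beta^{(0)} \cdot r \in \mathcal{O}(h) \cdot \mathcal{O}(h)$ supply the crucial second factor of $h$. The only cosmetic deviation is that you bound $\Vert\beta^{(1)}\Vert \leq 1 + Kh^{(p-1)\vee 0}$ via \cref{ineq:ineq:steady_state_lemma_vi}, whereas the paper invokes the immediate bound $\vert\beta^{(1)}\vert \leq 1$ directly from \cref{def_beta_MM}; both give $\mathcal O(1)$, so this makes no difference.
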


\begin{proof}
  By \cref{def:h_norm}, we have
  \begin{align}
    &\vertiii{\boldsymbol{\Psi}_{P(nh),h}\left(\boldsymbol{m}(nh)  \right) - \boldsymbol{\Phi}_h \left(m^{(0)}(nh)    \right)}_h
    \notag
    \\
    &\hspace{1cm}=
    S_1(h) + h S_2(h),
    \label{bound:vertiii_leq_J0+J1}
  \end{align}
  with $S_1(h)$ and $S_2(h)$ defined and bounded by
  \begin{align}
    S_1(h)
    &\defeq
    \left \Vert \Psi^{(0)}_h \left( \boldsymbol{m}(nh)  \right)  - \Phi_h^{(0)} \left( m^{(0)}(nh)  \right)   \right \Vert
    \notag
    \\
    &\stackrel{\text{\cref{def:Delta^-}}}{\leq}
    \underbrace{\Delta^{-(0)}((n+1)h)}_{\stackrel{\text{\cref{bound:Delta_minus_IBM/IOUP}}}{\leq} Kh^2 + \delta^{(0)}(nh) + h \delta^{(1)}(nh)  }
    \notag
    \\
    &+
    \underbrace{\left \Vert \beta^{(0)}((n+1)h) \right \Vert}_{ \stackrel{\text{\cref{ineq:ineq:steady_state_lemma_v}}}{\leq} Kh}
    \underbrace{\left \Vert r((n+1)h) \right \Vert}_{ \stackrel{\text{\cref{bound:r}}}{\leq} Kh + (1+Kh)\delta^{(1)}(nh) },
    \label{eq:J_0_bound}
  \end{align}
  and, analogously,
  \begin{align}
    S_2(h)
    &\defeq
    \left \Vert \Psi^{(1)}_h \left( \boldsymbol{m}(nh)  \right)  - \Phi_h^{(1)} \left( m^{(0)}(nh)  \right)   \right \Vert
    \notag
    \\
    &\stackrel{\text{\cref{def:Delta^-}}}{\leq}
    \underbrace{\Delta^{-(1)}((n+1)h)}_{\stackrel{\text{\cref{bound:Delta_minus_IBM/IOUP}}}{\leq } Kh + \delta^{(1)}(nh) }
    \notag
    \\
    &\phantom{\stackrel{\text{\cref{def:Delta^-}}}{\leq}}
    +
    \underbrace{\left \Vert \beta^{(1)}((n+1)h) \right \Vert}_{\stackrel{\text{\cref{def_beta_MM}}}{\leq} 1  }
    \underbrace{\left \Vert r((n+1)h) \right \Vert}_{ \stackrel{\text{\cref{bound:r}}}{\leq} Kh + (1+Kh)\delta^{(1)}(nh) }
    \label{eq:J_1_bound}
  \end{align}
  Insertion of \cref{eq:J_0_bound} and \cref{eq:J_1_bound} into \cref{bound:vertiii_leq_J0+J1} yields
  \begin{align}
    \notag
    &\vertiii{ \boldsymbol{\Psi}_{P(nh),h}\left(\boldsymbol{m}(nh)  \right) - \boldsymbol{\Phi}_h \left( m^{(0)}(nh)    \right) }_h
    \\
    &\hspace{2cm}
    \leq
    Kh^2
    +
    \delta^{(0)}(nh)
    +
    Kh \delta^{(1)}(nh)
    ,
  \end{align}
  which---after recalling $\delta^{(0)}(nh) = 0$ and applying Lemma \ref{lemma:delta_global_bound} to $\delta^{(1)}(nh)$---\rev{implies \cref{eq:lemma_global_truncation_error}}.
  \qed
\end{proof}

The previous lemma now implies a suitable prerequisite for a discrete Gr\"onwall inequality.

\begin{lemma}  \label{lemma:development_epsilon}
  Under \Cref{ass:f_Global_Lipschitz,ass:assumption2,ass:eps0_bound,ass:R} and for all sufficiently small $h>0$,
  \begin{align}
    \label{ineq:lemma:development_epsilon}
    \vertiii{\boldsymbol{\varepsilon}\left((n+1)h \right)}_h
    \leq
    \rev{Kh^{2} + (1+Kh) \left \Vert \varepsilon^{(0)}(nh) \right \Vert}.
  \end{align}
\end{lemma}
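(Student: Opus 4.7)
The plan is to decompose the error $\boldsymbol{\varepsilon}((n+1)h) = \boldsymbol{m}((n+1)h) - \boldsymbol{x}((n+1)h)$ into a local-truncation piece and a propagation piece, by inserting the flow of the ODE starting at the previous numerical estimate $m^{(0)}(nh)$. Concretely, using $\boldsymbol{m}((n+1)h) = \boldsymbol{\Psi}_{P(nh),h}(\boldsymbol{m}(nh))$ and $\boldsymbol{x}((n+1)h) = \boldsymbol{\Phi}_h(x^{(0)}(nh))$, I write
\begin{align*}
  \boldsymbol{\varepsilon}((n+1)h)
  =
  \bigl[\boldsymbol{\Psi}_{P(nh),h}(\boldsymbol{m}(nh)) - \boldsymbol{\Phi}_h(m^{(0)}(nh))\bigr]
  +
  \bigl[\boldsymbol{\Phi}_h(m^{(0)}(nh)) - \boldsymbol{\Phi}_h(x^{(0)}(nh))\bigr],
\end{align*}
and then apply the triangle inequality for $\vertiii{\quark}_h$.

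For the first bracket, I would invoke \Cref{lemma:truncation_error_global_bound} directly, which yields the bound $Kh^2$ uniformly in $n \in [T/h+1]$. This is the step that does the real work; it is where the global bounds on the Kalman gains from \Cref{proposition:steady_states_global} and the global bound on the state misalignment from \Cref{lemma:delta_global_bound} enter, but all of this is already packaged inside \Cref{lemma:truncation_error_global_bound}, so at this stage it is just a citation.

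For the second bracket, I would apply \Cref{lemma:flow_map_regularity} with $a = m^{(0)}(nh)$ and $b = x^{(0)}(nh)$, obtaining
\begin{align*}
  \vertiii{\boldsymbol{\Phi}_h(m^{(0)}(nh)) - \boldsymbol{\Phi}_h(x^{(0)}(nh))}_h
  \leq
  (1+Kh)\left\Vert m^{(0)}(nh) - x^{(0)}(nh) \right\Vert
  =
  (1+Kh)\left\Vert \varepsilon^{(0)}(nh) \right\Vert.
\end{align*}
Summing the two pieces yields \cref{ineq:lemma:development_epsilon}.

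I do not expect any serious obstacle here: the heavy lifting (global contractivity of the Kalman gains, the $\delta^{(1)}$-bound, and the assembly into a local-increment bound) has already been carried out in \Cref{proposition:steady_states_global,lemma:delta_global_bound,lemma:truncation_error_global_bound}, while the propagation step is a direct application of the Lipschitz-type estimate for $\boldsymbol{\Phi}_h$ in the $\vertiii{\quark}_h$ norm from \Cref{lemma:flow_map_regularity}. The only mild care required is to make sure $h$ is small enough so that both \Cref{lemma:flow_map_regularity,lemma:truncation_error_global_bound} apply with the same (possibly adjusted) constant $K$, which is absorbed into the convention stated after \Cref{lemma:Taylor_expansion}.
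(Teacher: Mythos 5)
Your decomposition is exactly the one used in the paper: insert $\boldsymbol{\Phi}_h(m^{(0)}(nh))$, apply the triangle inequality for $\vertiii{\quark}_h$, bound the local-increment term with \Cref{lemma:truncation_error_global_bound} and the propagation term with \Cref{lemma:flow_map_regularity}. The argument is correct and matches the paper's proof step for step.
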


\begin{proof}
  We observe, by the triangle inequality for \rev{the norm} $\vertiii{\cdot}_h$, that
  \begin{align} \notag
    &\vertiii{ \boldsymbol{\varepsilon} \left( (n+1)h \right)  }_h
    \\
    &\quad =
    \vertiii{ \boldsymbol{\Psi}_{P(nh),h} (\boldsymbol{m}(nh)) - \boldsymbol{\Phi}_h \left(x^{(0)}(nh) \right) }_h
    \notag
    \\
    &\qquad \leq
    \vertiii { \boldsymbol{\Psi}_{P(nh),h} (\boldsymbol{m}(nh)) - \boldsymbol{\Phi}_{h} \left(m^{(0)}(nh) \right) }_h
    \notag
    \\
    &\phantom{\qquad \leq \quad}
    +
     \vertiii { \boldsymbol{\Phi}_{h} \left(m^{(0)}(nh) \right) - \boldsymbol{\Phi}_{h} \left(x^{(0)}(nh) \right) }_h.
  \end{align}
  The proof is concluded by applying \Cref{lemma:truncation_error_global_bound} to the first and \Cref{lemma:flow_map_regularity} to the second summand of this bound (as well as recalling \rev{from} \cref{eq:def_varpepsilon} that $\Vert \varepsilon^{(0)}(nh) \Vert = \Vert m^{(0)}(nh) - x^{(0)}(nh) \Vert $).
  \qed
\end{proof}

\subsection{Global convergence rates}
\label{subsec:Global_convergence_rates}
\rev{With the above bounds in place, we can now prove global convergence rates.}
\begin{thm}[Global truncation error]  \label{theorem:GlobalTruncation}
  Under \Cref{ass:f_Global_Lipschitz,ass:assumption2,ass:eps0_bound,ass:R} and for all sufficiently small $h>0$,
  \begin{align}
    \label{ineq:global_convergence_rates}
    \max_{n \in [T/h + 1]} \left \Vert \varepsilon^{(0)}(nh)   \right \Vert
    \leq
    \max_{n\in [T/h + 1]}  \vertiii{\boldsymbol{\varepsilon}(nh) }_h
    \leq
    \rev{K(T)h},
  \end{align}
  \rev{where $K(T) > 0$ is a constant that depends on $T$, but not on $h$.}
\end{thm}

\begin{remark}
  \label{remark:bounded_global_convergence}
  Theorem \ref{theorem:GlobalTruncation} not only implies that the truncation error $\Vert \varepsilon^{(0)}(nh) \Vert$ on the solution of \cref{IVP} has global order $h$, but also (by \cref{def:h_norm}) that the truncation error $\Vert \varepsilon^{(1)}(nh) \Vert$ on the derivative is uniformly bounded by a constant $K$ independent of $h$.
  The convergence rate of this theorem is sharp in the sense that it cannot be improved over all $f$ satisfying \Cref{ass:f_Global_Lipschitz} \rev{since it is one order worse than the local convergence rate implied by Theorem \ref{lemma:Psi_with_delta}.}
\end{remark}

\begin{proof}
  Using $\left \Vert \varepsilon^{(0)}(nh) \right \Vert \leq \vertiii{\boldsymbol{\varepsilon}(nh)}_h$ (due to \cref{def:h_norm}), the bound \cref{ineq:lemma:development_epsilon}, a telescoping sum, and $\vertiii{\boldsymbol{\varepsilon}(0)}_h \leq Kh^2$ (by \Cref{ass:eps0_bound}), we obtain, for all sufficiently small $h>0$, that
    \begin{align}
    &\vertiii{\boldsymbol{\varepsilon} ((n+1)h) }_h - \vertiii{\boldsymbol{\varepsilon}(nh)}_h
    \notag
    \\
    &\quad \stackrel{\text{\cref{def:h_norm}}}{\leq}
    \vertiii{\boldsymbol{\varepsilon} ((n+1)h) }_h - \left \Vert \varepsilon^{(0)}(nh)  \right \Vert
    \notag
    \\
    &\quad \stackrel{\text{\cref{ineq:lemma:development_epsilon}}}{\leq}
    \rev{
    Kh^{2} + Kh \left \Vert \varepsilon^{(0)}(nh)  \right \Vert
    }
    \notag
    \\
    &\quad \stackrel{\text{\cref{def:h_norm}}}{\leq} \rev{Kh^{2} + Kh \vertiii{\boldsymbol{\varepsilon}(nh)}_h}
    \notag
    \\
    &\quad \stackrel{\text{(tel.\ sum)}}{=}
    \rev{Kh^{2} + \vertiii{\boldsymbol{\varepsilon} (0) }_h }
    \notag
    \\
    &\phantom{\stackrel{\text{(tel.\ sum)}}{=}} + Kh \sum_{l=0}^{n-1}  \left( \vertiii{\boldsymbol{\varepsilon} ((l+1)h) }_h - \vertiii{\boldsymbol{\varepsilon}(lh)}_h \right) 
    \notag
    \\
    &\stackrel{(\vertiii{\boldsymbol{\varepsilon}(0)}_h \leq Kh^2)}{\leq}
    Kh^{2} 
    \notag
    \\
    &\phantom{\stackrel{(\vertiii{\boldsymbol{\varepsilon}(0)}_h \leq Kh^2)}{\leq}}
    + Kh \sum_{l=0}^{n-1}  \left( \vertiii{\boldsymbol{\varepsilon} ((l+1)h) }_h - \vertiii{\boldsymbol{\varepsilon}(lh)}_h \right).
    \label{eq:Gronwall_prerequisite}
  \end{align}
  Now, by a special version of the discrete Gr\"onwall inequality \citep{Clark87}, if $z_n$ and $g_n$ are sequences of real numbers (with $g_n \geq 0$), $c \geq 0$ is a nonnegative constant, and if
  \begin{align}
    \label{Gronwall_inequality1}
    z_n \leq c + \sum_{l=0}^{n-1} g_l z_l, \qquad \text{ for all } n \in \mathbb{N},
  \end{align}
  then
  \begin{align*}
    z_n \leq c \prod_{l=0}^{n-1} (1+g_l) \leq c \exp \left( \sum_{l=0}^{n-1} g_l \right), \qquad \text{ for all } n \in \mathbb{N}.
  \end{align*}
  Application of this inequality to \cref{eq:Gronwall_prerequisite} with $z_n \defeq \vertiii{\boldsymbol{\varepsilon} ((n+1)h) }_h - \vertiii{\boldsymbol{\varepsilon}(nh)}_h $, $g_n \defeq Kh$, and $c \defeq Kh^{2}$ yields
  \begin{align}
    \vertiii{\boldsymbol{\varepsilon} ((n+1)h) }_h - \vertiii{\boldsymbol{\varepsilon}(nh)}_h
    &\leq
    K(T)h^{2} \exp \left( nKh \right) 
    \\
    &\stackrel{n \leq T/h}{\leq} 
    K(T)h^2.
    \label{ineq:summands_bsvarpes_differences}
  \end{align}
  By another telescoping sum argument and $\vertiii{\boldsymbol{\varepsilon}(0)}_h \leq Kh^2$, we obtain
  \begin{align}
    \notag
    \vertiii{\boldsymbol{\varepsilon}(nh)}_h
    &\stackrel{\text{(tel.\ sum)}}{=}
    \sum_{l=0}^{n-1} \left( \vertiii{\boldsymbol{\varepsilon}((l+1)h)}_h - \vertiii{\boldsymbol{\varepsilon}(lh)}_h \right)
    \\
    &\phantom{\stackrel{\text{(tel.\ sum)}}{=}}
    + \vertiii{\boldsymbol{\varepsilon}(0)}_h
    \\
    &\stackrel{\text{\cref{ineq:summands_bsvarpes_differences}}}{\leq}
    nK(T)h^2 + Kh^2
    \\
    &\stackrel{n \leq T/h}{\leq} K(T)h + Kh^2 
    \\
    &\leq 
    K(T)h  +  K h^2
    ,
  \end{align}
  for all sufficiently small $h>0$.
  Recalling that \rev{$\left \Vert \varepsilon^{(0)}(nh) \right \Vert \allowbreak \leq \allowbreak \vertiii{\boldsymbol{\varepsilon}(nh)}_h$}, by \cref{def:h_norm}, concludes the proof.
  \qed
\end{proof}

\section{Calibration of credible intervals}
\label{sec:calibration_of_credible_intervals}

In PN, one way to judge calibration of a Gaussian output $\mathcal{N}(m,V)$ is to check whether the implied 0.95 \rev{credible} interval $[m-2\sqrt{V},m+2\sqrt{V}]$ contracts at the same rate as the convergence rate of the posterior mean to the true quantity of interest. 
For the filter, this would mean that the rate of contraction of $\max_n \sqrt{P_{00}(nh)}$ should contract at \rev{the same rate as} $\max_{n \in [T/h + 1]} \Vert \varepsilon^{(0)}(nh) \Vert$ \rev{(recall its rates from Theorem \ref{theorem:GlobalTruncation}).}
Otherwise, for a higher or lower rate of the interval it would eventually be under- or overconfident, as $h \to 0$.
The following proposition shows---in light of the sharp bound \cref{ineq:global_convergence_rates} on the global error---that the \rev{credible} intervals are well calibrated in this \rev{sense if $p \in [1,\infty]$.}

\begin{thm}
  \label{theorem:contraction_of_credible_intervals}
  \rev{Under \Cref{ass:assumption2} and for $R \equiv K h^p$, $p \in [0,\infty]$, as well as} sufficiently small $h>0$,
  \begin{align}
    \label{ineq:maxP00-}
    \max_{n \in [T/h + 1]} P_{00}^- ( n h )
    &\leq
    K(T)h^{(p+1) \wedge 2}, \qquad \text{and}
    \\
    \label{ineq:maxP00}
    \max_{n \in [T/h + 1]} P_{00} ( n h )
    &\leq
    K(T)h^{(p+1) \wedge 2}.
  \end{align}
\end{thm}

\begin{proof}
  See \Cref{appendix:uncertain_calibration_theorem}.
  \qed
\end{proof}

\section{Numerical experiments}
\label{sec:experiments}

\hans{We are following Schober et al in their VDP and [Abdulle and Garegnani] and [Conrad et al] in their FHN example.}

In this section, we empirically assess the following hypotheses: 

\begin{enumerate}[(i)]
  \item the worst-case convergence rates from Theorem \ref{theorem:GlobalTruncation} hold not only for $q=1$ but also for $q \in \{2,3\}$ (see \Cref{subsec:exp_original_WPD}),
  \item the convergence rates of the credible intervals from Theorem \ref{theorem:contraction_of_credible_intervals} hold true (see \Cref{subsec:exp_Calibration_of_Credible_Intervals}), and
  \item \Cref{ass:R} is necessary to get these convergence rates (see \Cref{subsec:exp_the_effect_of_R}).
\end{enumerate}

The three hypotheses are all supported by the experiments.
These experiments are subsequently discussed in \Cref{subsec:interpretation_of_experiments}.
\Cref{appendix:section:delta_convergence} contains an additional experiment illustrating the convergence rates for the state misalignment $\delta$ from \Cref{lemma:delta_global_bound}.

\subsection{Global convergence rates for $q \in \{1,2,3\}$}
\label{subsec:exp_original_WPD}

We consider the following three test IVPs: 
Firstly, a the following linear ODE
\begin{align}
  \label{eq:ex_ODE_linear}
  \dot{x}(t) = \Lambda x(t),\ \forall t \in [0,10], 
  \\
  \textrm{ with } \Lambda = \begin{pmatrix} 0 & -\pi \\ \pi & 0 \end{pmatrix} \textrm{ and } x(0) = \left ( 0 , 1 \right )^{\intercal},
  \notag
\end{align}
and has the harmonic oscillator
\begin{align}
  x(t)
  =
  e^{t\Lambda} x(0)
  =
  \begin{pmatrix}
  -\sin(t \pi)
  &
  \cos(t \pi)
  \end{pmatrix}^{\intercal}
\end{align}
as a solution.
Secondly, the logistic equation
\begin{align}
  \label{eq:ex_ODE_logistic}
  \dot x(t)
  =
  \lambda_0 x(t)\left ( 1 - x(t)/ \lambda_1  \right )
  ,
  \
  \forall t \in [0,1.5],
  \\
  \textrm{ with }
  (\lambda_0,\lambda_1) = (3,1)
  \textrm{ and }
  x(0)
  =
  0.1,
  \notag
\end{align}
which has the logistic curve
\begin{align}
  x(t)
  =
  \frac
  {\lambda_1 \exp(\lambda_0 t) x(0)}
  {\lambda_1 + x(0) (\exp(\lambda_0 t) - 1) }.
\end{align}
And, thirdly, the FitzHugh--Nagumo model
\begin{align}
  \label{eq:ex_FHN}
  \begin{pmatrix}
    x_1(t)
    \\
    x_2(t)
  \end{pmatrix}
  =
  \begin{pmatrix}
    x_1(t) - \frac{x_1(t)}{3} - x_2(t)
    \\
    \frac{1}{\tau} \left( x_1(t) + a - bx_2(t)   \right),
  \end{pmatrix},
  \forall t \in [0,10]
\end{align}
with $(a,b,c) = (0.08, 0.07, 1.25)$ and $x(0) = (1,0)$ which does not have a closed-form solution.
Its solution, which we approximate by Euler's method with a step size of $h=10^{-6}$ for the below experiments, is depicted in \Cref{fig:fhn_solution}.
\begin{figure}
    \begin{center}
        \scalebox{.5}{\input{fig/fhntrue_solution}}
        \caption{True solution of the FitzHugh--Nagumo model, \cref{eq:ex_FHN}; $x_1$ in blue and $x_2$ in orange.\label{fig:fhn_solution}}
    \end{center}
    \vspace{-15pt}
\end{figure}
We numerically solve these three IVPs with the Gaussian ODE filter for multiple step sizes $h>0$ and with a $q$-times IBM prior (i.e.~$\theta = 0$ in \cref{eq:a_restriction_to_IOUP/IBM}) for $q \in \{1,2,3\}$ and scale $\sigma=20$.
As a measurement model, we employ the minimal $R \equiv 0$ and maximal measurement variance $R \equiv K_R h^q$ (for $h \leq 1$) which are permissible under \Cref{ass:R} whose constant $K>0$ is denoted explicitly by $K_R$ in this section.
The resulting convergence rates of global errors $\left \Vert m(T) - x(T) \right \Vert$ are depicted in a work-precision diagram in \Cref{fig:wpd_1}; cf.~\citet[Chapter II.1.4]{hairer87:_solvin_ordin_differ_equat_i} for such diagrams for Runge--Kutta methods.
Now, recall from Theorem \ref{theorem:GlobalTruncation} that, for $q=1$, the global truncation error decreases at a rate of at least $h^q$ in the worst case.
\Cref{fig:wpd_1} shows that these convergence rates of $q$\textsuperscript{th} order hold true in the considered examples for values of up to $q=3$ if $R \equiv 0$ and, for values of up to $q=3$.
\begin{figure*}
    \begin{center}
        \resizebox{.9\textwidth}{!}{\input{fig/stco_exp26}}
        \caption{\rev{Work-precision diagrams for the Gaussian ODE filter with $q$-times IBM prior, for $q \in \{1,2,3\}$, applied to the linear \cref{eq:ex_ODE_logistic}, logistic ODE \cref{eq:ex_ODE_linear} and the FitzHugh--Nagumo model. The number of function evaluations (\# Evals of $f$), which is inversely proportional to the step size $h$, is plotted in color against the logarithmic global error at the final time $T$. The (dash-)dotted gray lines visualize idealized convergence rates of orders one to four. The left and right columns employ the minimal $R\equiv 0$ and maximal measurement variance $R \equiv K_R h^q$ ($K_R = 1$) which are permissible under \Cref{ass:R}.}\label{fig:wpd_1}}
    \end{center}
    \vspace{-15pt}
\end{figure*}
In the case of $R \equiv 0$, even $(q+1)$\textsuperscript{th} order convergence rates appear to hold true for all three ODEs and $q \in \{1,2,3\}$.
Note that it is more difficult to validate these convergence rates for $q=4$, for all three test problems and small $h>0$, since numerical instability can contaminate the analytical rates.
\hans{Note that the deviation from perfect straight line convergence rates for large step-sizes is perfectly known as can, e.g., be seen in the work-precision diagrams of \citet{hairer87:_solvin_ordin_differ_equat_i} and \citet{hairer96:_solvin_ordin_differ_equat_ii}!}

\subsection{Calibration of credible intervals}
\label{subsec:exp_Calibration_of_Credible_Intervals}

To demonstrate the convergence rates of the posterior credible intervals proved in Theorem \ref{theorem:contraction_of_credible_intervals}, we now restrict our attention to the case of $q=1$, that was considered therein.
As in \Cref{subsec:exp_original_WPD}, we numerically solve the IVPs \cref{eq:ex_ODE_logistic,eq:ex_ODE_linear} with the Gaussian ODE filter with a once IBM prior with fixed scale $\sigma=1$.
We again employ the minimal $R \equiv 0$ and maximal measurement variance $R \equiv K_R h^q$ (for $h \leq 1$) which are permissible under \Cref{ass:R} as a measurement model.
\Cref{fig2:wpd_2_credible_intervals} depicts the resulting convergence rates in work-precision diagrams.
\begin{figure*}
  \begin{center}
  \resizebox{.9\textwidth}{!}{\input{fig/exp30_final_version.tex}}
  \caption{\rev{Work-precision diagrams for the Gaussian ODE filter with $q$-times IBM prior, for $q=1$, applied to the linear \cref{eq:ex_ODE_linear} and logistic ODE \cref{eq:ex_ODE_logistic} in the upper and lower row, respectively. The number of function evaluations (\# Evals of $f$), which is inversely proportional to the step size $h$, is plotted in color against the logarithmic global error at the final time $T$. The (dash-)dotted gray lines visualize idealized convergence rates of orders one and two. The dashed blue lines show the posterior standard deviations calculated by the filter. The left and right columns, respectively, employ the minimal $R\equiv 0$ and maximal measurement variance $R \equiv K_R h^q$ ($K_R = 5.00 \times 10^3$) which are permissible under \Cref{ass:R}.}\label{fig2:wpd_2_credible_intervals}}
  \end{center}
  \vspace{-15pt}
\end{figure*}
As the parallel standard deviation (std.~dev.) and $h^1$ convergence curves show, the credible intervals asymptotically contract at the rate of $h^1$ guaranteed by Theorem \ref{theorem:contraction_of_credible_intervals}.
In all four diagrams of \Cref{fig2:wpd_2_credible_intervals}, the global error shrinks at a faster rate than the width of the credible intervals.
This is unsurprising for $R \equiv 0$ as we have already observed convergence rates of $h^{q+1}$ in this case.
While this effect is less pronounced for $R \equiv K_R h^q$, it still results in underconfidence as $h \to 0$.
Remarkably, the shrinking of the standard deviations seems to be `adaptive' to the numerical error---by which we mean that, as long as the numerical error hardly decreases (up to $10^{1.75}$ evaluations of $f$), the standard deviation also stays almost constant, before adopting its $h^1$ convergence asymptotic (from $\approx 10^{2.00}$).

\subsection{Necessity of \Cref{ass:R}}
\label{subsec:exp_the_effect_of_R}

Having explored the asymptotic properties under \Cref{ass:R} in \Cref{subsec:exp_original_WPD,subsec:exp_Calibration_of_Credible_Intervals}, we now turn our attention to the question of whether this assumption is necessary to guarantee the convergence rates from Theorems \ref{theorem:GlobalTruncation} and \ref{theorem:contraction_of_credible_intervals}.
This question is of significance, because \Cref{ass:R} is weaker than the $R \equiv 0$ assumption of the previous theoretical results (i.e.~Proposition 1 and Theorem 1 in \citet{schober2019}) and it is not self-evident that it cannot be further relaxed.
To this end, we numerically solve the logistic ODE \cref{eq:ex_ODE_logistic} with the Gaussian ODE filter with a once IBM prior with fixed scale $\sigma = 1$ and measurement variance $R \equiv K_R h^{1/2}$, which is impermissible under \Cref{ass:R}, for increasing choices of $K_R$ from $0.00 \times 10^0$ to $1.00 \times 10^7$.
In the same way as in \Cref{fig2:wpd_2_credible_intervals}, the resulting work-precision diagrams are plotted in \Cref{fig3:wpd_with_non-permissible_R}.

\begin{figure*}
  \begin{center}
  \resizebox{.9\textwidth}{!}{\input{fig/exp32_Section83_final}}
  \caption{\rev{Work-precision diagrams for the Gaussian ODE filter with $q$-times IBM prior, for $q=1$ and $R\equiv K_R h^{1/2}$, applied to the logistic ODE \cref{eq:ex_ODE_logistic} for increasing values of $K_R$. The number of function evaluations (\# Evals of $f$), which is inversely proportional to the step size $h$, is plotted in blue against the logarithmic global error at the final time $T$. The (dash-)dotted gray lines visualize idealized convergence rates of orders one and two. The dashed blue lines show the posterior standard deviations calculated by the filter.}\label{fig3:wpd_with_non-permissible_R}}
  \end{center}
  \vspace{-15pt}
\end{figure*}

In contrast to the lower left diagram in \Cref{fig2:wpd_2_credible_intervals}, which presents the same experiment for $R \equiv K_R h^q$ (the maximal measurement variance permissible under \Cref{ass:R}), the rate of $h^2$, that is again observed for $K_R = 0$ in the first diagram, is already missed for $K_R = 1.00 \times 10^0$ in the second diagram.
With growing constants, the convergence rates of the actual errors as well as the expected errors (standard deviation) decrease from diagram to diagram.
In the center diagram with $K_R=3.73 \times 10^3$, the rates are already slightly worse than the $h^1$ convergence rates guaranteed by Theorems \ref{theorem:GlobalTruncation} and \ref{theorem:contraction_of_credible_intervals} under \Cref{ass:R}, whereas, for $K_R = 5.00 \times 10^3$, the convergence rates in the lower left plot of \Cref{fig2:wpd_2_credible_intervals} were still significantly better than $h^1$.
For the greater constants up to $K_R = 1.00 \times 10^7$, the rates even become significantly lower.
Notably, as in the lower right diagram of \Cref{fig2:wpd_2_credible_intervals}, the slope of the standard deviation curve matches the slope of the global error curve, as can be seen best in the lower right subfigure---thereby asymptotically exhibiting neither over- nor underconfidence.
These experiments suggest that the convergence rates from Theorems \ref{theorem:GlobalTruncation} and \ref{theorem:contraction_of_credible_intervals} do not hold in general for $R \equiv K_R h^{1/2}$.
Hence, it seems likely that \Cref{ass:R} is indeed necessary for our results and cannot be further relaxed without lowering the implied worst-case convergence rates.

\subsection{Discussion of experiments}
\label{subsec:interpretation_of_experiments}

Before proceeding to our overall conclusions, we close this section with a comprehensive discussion of the above experiments.
First and foremost, the experiments in \Cref{subsec:exp_original_WPD} suggest that Theorem \ref{theorem:GlobalTruncation}, the main result of this paper, might be generalizable to $q \in \{2,3\}$ and potentially even higher $q \in \mathbb N$---although unresolved issues with numerical instability for small step sizes prevent us from confidently asserting that these theoretical results would hold in practice for $q \geq 4$.
Moreover, we demonstrated the contraction rates of the posterior credible intervals from Theorem \ref{theorem:contraction_of_credible_intervals} and evidence for the necessity of \Cref{ass:R} in \Cref{subsec:exp_the_effect_of_R,subsec:exp_Calibration_of_Credible_Intervals}.
The asymptotics revealed by these experiments can be divided by the employed measurement model into three cases: the zero-noise case $R \equiv 0$, the permissible non-zero case $R \leq K_R h^q$ (under \Cref{ass:R}) and the non-permissible case $R \nleq K_R h^q$.
First, if $R \equiv 0$, the diagrams in the left column of \Cref{fig:wpd_1} reaffirm the $h^{q+1}$ convergence reported for $q \in \{1,2\}$ in \citet[Figure 4]{schober2019} and extend them to $q=3$ (see \Cref{sec:discussion} for a discussion on why we expect the above global convergence proofs to be extensible to $q \geq 2$)

The contraction rates of the credible intervals, for $q = 1$, appear to be asymptotically underconfident in this case as they contract faster than the error.
This underconfidence is not surprising in so far as the posterior standard deviation is a  worst-case bound for systems modeled by the prior, while the convergence proofs require smoothness of the solution of one order higher than sample paths from the prior.
This is a typical result that highlights an aspect known to, but on the margins of classic analysis:
The class of problems for which the algorithm converges is rougher than the class on which convergence order proofs operate.
How to remedy such overly-cautious UQ remains an open research question in PN as well as classical numerical analysis.

Secondly, in the case of $R>0$, as permissible under \Cref{ass:R}, the convergence rates are slightly reduced compared to the case $R \equiv 0$, exhibiting convergence between $h^q$ and $h^{q+1}$.
The asymptotic underconfidence of the credible intervals, however, is either reduced or completely removed as depicted in the right column of \Cref{fig2:wpd_2_credible_intervals}.
Thirdly, in the final case of an impermissibly large $R>0$, the $h^q$ convergence speed guaranteed by Theorem \ref{theorem:GlobalTruncation} indeed does not necessarily hold anymore---as depicted in \Cref{fig3:wpd_with_non-permissible_R}.
Note, however, that even then the convergence rate is only slightly worse than $h^q$.
The asymptotic UQ matches the observed global error in this case, as the parallel standard deviation and the $h^1$ curves in all but the upper left $R \equiv 0$ diagram show.

Overall, the experiments suggest that, in absence of statistical noise on $f$, a zero-variance measurement model yields the best convergence rates of the posterior mean.
Maybe this was expected as, in this case, $R$ only models the inaccuracy from the truncation error, that ideally should be treated adaptively \citep[Section 2.2]{Kersting2016UAI}.
The convergence rates of adaptive noise models should be assessed in future work.
As the observed convergence rates in practice sometimes outperform the proved worst-case convergence rates, we believe that an average-case analysis of the filter in the spirit of \citet{Ritter2000} may shed more light upon the expected practical performance.
Furthermore, it appears that the UQ becomes asymptotically accurate as well as adaptive to the true numerical error as soon as the $R>0$ is large enough.
This reinforces our hope that these algorithms will prove useful for IVPs when $f$ is estimated itself \citep[Section 3(d)]{HenOsbGirRSPA2015}, thereby introducing a $R>0$.
\section{Conclusions}
\label{sec:discussion}
We presented a worst-case convergence rate analysis of the Gaussian ODE filter, comprising both local and global convergence rates.
While local convergence rates of $h^{q+1}$ were shown to hold for all $q \in \mathbb N$, IBM and IOUP prior as well as any noise model $R \geq 0$, our global convergence results is restricted to the case of $q=1$, IBM prior and fixed noise model $R \equiv Kh^p$ with \rev{$p \in [1,\infty]$.
While a} restriction of the noise model seems inevitable, we believe that the other two restrictions can be lifted:
In light of Theorem \ref{lemma:Psi_with_delta}, global convergence rates for the IOUP prior might only require an additional assumption that ensures that all possible data sequences $\{y(nh);n=1,\dots,T/h\}$ (and thereby all possible $q$\textsuperscript{th}-state sequences $\{m^{(q)}(nh);n=0,\dots,T/h\}$) remain uniformly bounded \rev{(see discussion in \Cref{remark_on_local_convergence_theorem}).}
For the case of $q \geq 2$, it seems plausible that a proof analogous to the presented one would already yield glo\-bal convergence rates of order $h^q$\rev{,\footnote{According to \citet{Loscalzo67}, the filter might, however, suffer from numerical instability for high choices of $q$. (See \citet[Section 3.1]{schober2019} for an explanation of how such results on spline-based methods concern the ODE filter.)} as suggested for $q \in \{2,3\}$ by the experiments in \Cref{subsec:exp_original_WPD}.}

The orders of the predictive \rev{credible} intervals can also help to intuitively explain the threshold of $p=1$ \rev{(or maybe more generally: $p=q$; see \Cref{fig:wpd_1}) below which the performance} of the filter \rev{is not as good, due to} \cref{ineq:steady_state_lemma_i,ineq:steady_state_lemma_ii,ineq:steady_state_lemma_iv,ineq:ineq:steady_state_lemma_v,ineq:ineq:steady_state_lemma_vi}:
According to \citet[Equation (20)]{Kersting2016UAI}, the `true' (push-forward) variance on $y(t)$ given the predictive distribution $\mathcal{N}(m^-(t),\allowbreak P^-(t))$ is equal to the integral of $f f^{\intercal}$ with respect to $\mathcal{N}(m^-(t),\allowbreak P^-(t))$, whose maximum over all time steps, by \cref{ineq:maxP00-}, has order $\BO(h^{\frac{p+1}2 \wedge 1})$ if $ff^{\intercal}$ is globally Lipschitz---since $P^-(t)$ enters the argument of the integrand $f f^{\intercal}$, after a change of variable, only under a square root.
Hence, the added `statistical' noise $R$ on the evaluation of $f$ is of lower order than the accumulated `numerical' variance $P^-(t)$ (thereby preventing numerical convergence) if and only if $p<1$.
Maybe this, in the spirit of \citet[Subsection 3(d)]{HenOsbGirRSPA2015}, can serve as a criterion for vector fields $f$ that are too roughly approximated for a numerical solver to output a trustworthy result, even as $h \to 0$.

\rev{Furthermore, the} competitive practical performance of the filter, as \rev{numerically} demonstrated in \citet[Section 5]{schober2019}, might only \rev{be} completely captured by an average-case analysis in the sense of \citet{Ritter2000}\rev{, where the average error is computed with respect to some distribution $p(f)$, i.e.~over a distribution of ODEs.
To comprehend this idea,} recall that the \rev{posterior filtering mean is the Bayes estimator with minimum mean squared error} in linear dynamical systems with \rev{Gauss--Markov prior} (as defined by \rev{the SDE} \cref{SDE}), i.e.~when the data is not evaluations of $f$ but real i.i.d.~measurements, \rev{as well as} in the special case of $\dot x(t) = f(t)$, when the IVP simplifies to a quadrature problem---see \rev{\citet{Solak_2003}} and \citet[Section 2.2]{o1991bayes} respectively.
In fact, the entire purpose of the update step is to correct the prediction in the (on average) \rev{correct} direction, while a worst-case analysis must assume that it corrects in the worst possible direction in every \rev{step---which} we execute by the application of the triangle inequality in \cref{def:Delta^-} resulting in a worst-case upper bound that is the sum of the worst-case errors from prediction and update step.
An analysis of the probabilities of `good' vs.~`bad' updates might therefore pave the way for such an average-case analysis in the setting of this paper.
Since, in practice, truncation errors of ODE solvers tend to be significantly smaller than the worst \rev{case---as mirrored by the experiments in \Cref{sec:experiments}---such} an analysis might be useful for applications.

Lastly, we hope that the presented convergence analysis can lay the foundations for similar results for the novel ODE filters (extended KF, unscented KF, particle filter) introduced in \citet{TronarpKSH2019}, and can advance the research on uncertainty-aware likelihoods for inverse problems by ODE filtering \citep[Section 3]{KerstingKraemer_godef_inverse_2020}.

\begin{acknowledgements}
  The authors are grateful to Han Cheng Lie for discussions and feedback to early versions of what is now \Cref{sec:regularity_of_flow,sec:auxiliary_bounds_on_intermediate_quantities} of this work, as well as \Cref{subsec:Global_convergence_rates}.
  The authors also thank Michael Schober for valuable discussions and helpful comments on the manuscript.

  \rev{TJS's work has been} partially supported by the Freie Universit\"{a}t Berlin within the Excellence Initiative of the German Research Foundation (DFG), by the DFG through grant CRC 1114 ``Scaling Cascades in Complex Systems'', and by the National Science Foundation under grant DMS-1127914 to the Statistical and Applied Mathematical Sciences Institute (SAMSI) and SAMSI's QMC Working Group II ``Probabilistic Numerics''.
  \rev{HK and PH gratefully acknowledge financial support by the German Federal Ministry of Education and Research through BMBF grant 01IS18052B (ADIMEM).
  PH also gratefully acknowledges support through ERC StG Action 757275 / PANAMA.}
  
  Any opinions, findings, and conclusions or recommendations expressed in this article are those of the authors and do not necessarily reflect the views of the above-named institutions and agencies.
\end{acknowledgements}

%
\section*{Conflict of interest}

The authors declare that they have no conflict of interest.

\bibliographystyle{spbasic}       

\bibliography{bibfile.bib}

\appendix

\section{Derivation of $A$ and $Q$}
\label{appendix:section_A_and_Q}
As derived in \citet[\rev{S}ection 2.2.6]{sarkka2006thesis} the solution of the SDE \cref{SDE}, i.e.
\begin{align} \label{SDE_appendix}
  \rd \rev{\boldsymbol{X}(t)}
  &=
  \begin{pmatrix} \rd \rev{X^{(0)}(t)} \\ \vdots \\ \rd \rev{X^{(q-1)}(t)} \\ \rd \rev{X^{(q)}(t)} \end{pmatrix}
  \\
  &=
  \underbrace{\begin{pmatrix} 0 & 1 & 0 \dots &  0 \\ \vdots & \ddots & \ddots & 0 \\ \vdots &  & \ddots & 1 \\ c_0 & \dots & \dots & c_q \end{pmatrix}}_{\qefed F} \underbrace{\begin{pmatrix} \rev{X^{(0)}(t)} \\ \vdots \\ \rev{X^{(q-1)}(t)} \\ \rev{X^{(q)}(t)} \end{pmatrix}}_{= \rev{\boldsymbol{X} (t) }} \ \rd t + \underbrace{\begin{pmatrix} 0 \\ \vdots \\ 0 \\ \sigma   \end{pmatrix}}_{\qefed L}\ \rd B\rev{(t)},
  \notag
\end{align}
\rev{where we omitted the index $j$ for simplicity, is a Gauss--Markov} process with mean $m(t)$ and covariance matrix $P(t)$ given by
\begin{align}
  m(t) = A(t) m(0),
  \qquad
  P(t) = A(t)P(0) A(t)^{\intercal} + Q\rev{(t)},
\end{align}
where the matrices \rev{$A,\ Q \in \R^{(q+1)\times (q+1)}$} are explicitly defined by
\begin{align}
  A(t) =& \exp(tF),
  \\
  Q\rev{(t)} 
  \defeq&  
  \int_0^t \exp(F(\rev{t}-\tau))\rev{L L^{\intercal}} \exp(F(\rev{t}-\tau))^{\intercal}\ \rd\tau. \label{matrix:Q}
\end{align}

\rev{Parts of the following calculation can be found in \citet{magnani2017}.}
If we choose \rev{$c_0, \dots, c_{q-1} = 0$ and $c_q= - \theta$ (for $\theta \geq 0$) in \cref{SDE_appendix}} the unique strong solution of the SDE is a $q$-times \rev{IOUP, if $\theta>0$, and a $q$-times IBM, if $\theta=0$; see e.g.~\citet[Chapter 5: Example 6.8]{karatzas1991brownian}.
By} \cref{matrix:Q} and
\begin{align}
  \rev{
  \begin{pmatrix} (tF)^k \end{pmatrix}_{i,j}
  =
  t^k
  \left [
  \one_{j-i=k} + (- \theta)^{k+i-q} \one_{\{j=q,\ i+k\geq q\}}
  \right ],   }
\end{align}
\rev{it} follows that
\begin{align}
  \label{A_IOUP_appendix}
  A(t)_{ij}
  &=
  \begin{pmatrix} \sum_{k=0}^{\infty} \frac {(tF)^k} {k!} \end{pmatrix}_{i,j}
  \\
  &=
  \begin{cases}
    \one_{i\leq j} \frac{t^{j-i}}{(j-i)!}, & \mbox{if }j\neq q, \\ \frac{1}{(-\theta)^{q-i}} \sum_{k=q-i}^{\infty} \frac{(- \theta t)^k}{k!} , & \mbox{if }j=q,
  \end{cases}
  \notag
  \\
  &\rev{=
  \begin{cases}
    \one_{i\leq j} \frac{t^{j-i}}{(j-i)!}, & \mbox{if }j\neq q, \\ \frac{t^{q-i}}{(q-i)!} - \theta \sum_{k=q+1-i}^{\infty} \frac{(-\theta)^{k+i-q-1} t^k}{ k! } , & \mbox{if }j=q.
  \end{cases} }
  \notag
\end{align}
\rev{Analogously, it follows that}
\begin{align}
  \label{F_IOUP_appendix}
  &\exp(F(t-\tau))
  \\
  &\ = \begin{cases}
  \one_{i\leq j} \frac{(t-\tau)^{j-i}}{(j-i)!}, & \mbox{if }j\neq q, \\ \frac{(t-\tau)^{q-i}}{(q-i)!} - \theta \sum_{k=q+1-i}^{\infty} \frac{(-\theta)^{k+i-q-1} (t-\tau)^k}{ k! } , & \mbox{if }j=q,
  \end{cases}.
  \notag
\end{align}
If we insert \rev{\cref{F_IOUP_appendix} into \cref{matrix:Q}}, \rev{then} we obtain, by the sparsity of $L$, that
\begin{align}
  &{Q(t)}_{ij}
  \\
  \notag
  &\ =
  \frac{\sigma^2}{\rev{(-\theta)}^{2q-i-j}} \int_0^t \left ( \sum_{k=q-i}^{\infty} \frac{(-\theta \tau)^k}{k!}  \right) \left( \sum_{l=q-j}^{\infty} \frac{(-\theta \tau)^l}{l!}  \right) \ \rd \tau,
\end{align}
\rev{and the dominated convergence theorem (with dominating function $\tau \mapsto e^{2\theta \tau}$) yields
\begin{align}
  Q(t)_{ij}
  &=
  \frac{\sigma^2}{(-\theta)^{2q-i-j}} \sum_{k=q-i}^{\infty} \sum_{l=q-j}^{\infty} \int_0^t \frac{(-\theta \tau)^{k+l}}{k!l!} \ \rd \tau
  \notag
  \\
  &=
  \frac{\sigma^2}{(-\theta)^{2q-i-j}} \sum_{k=q-i}^{\infty} \sum_{l=q-j}^{\infty} (-\theta)^{k+l} \frac{ t^{k+l+1}}{(k+1+l)k!l!}.
  \label{Q_IOUP_appendix}
\end{align} }

Now, by extracting the first term and noticing that the rest of the series is in $\Theta(t^{2q+2-i-j})$, it follows that
\begin{align}
  Q(t)_{ij}
  &=
  \sigma^2 \frac{t^{2q+1-i-j}}{(2q+1-i-j)(q-i)!(q-j)!}
  \notag
  \\  
  &\phantom{=} + \Theta\left(t^{2q+2-i-j}\right).
\end{align}

\section{Extension to $x$ with dependent dimensions}
\label{appendix:dependent_dimensions}

The algorithm in \Cref{Gaussian ODE filtering} employs a prior $\boldsymbol X$ with independent dimensions $\boldsymbol{X}_j = \left( X_j^{(0)}, \dots, X_j^{(q)} \right)^ \intercal$, $j \in [d]$, by \cref{SDE}.
While this constitutes a loss of generality for our new theoretical results, which do not immediately carry over to the case of $x$ with dependent dimensions, it is not a restriction to the class of models the algorithm can employ.
To construct such a prior $\boldsymbol X$, we first stack its dimensions into the random vector $\boldsymbol X = (\boldsymbol X_0^{\intercal},\dots,\boldsymbol X_{d-1}^{\intercal})^{\intercal}$, choose symmetric positive semi-definite matrices $K_x,K_{\varepsilon} \in \mathbb R^{d \times d}$, and define, using the Kronecker product $\otimes$, its law according to the SDE
\begin{align}
  \label{eq:SDE_with_dependent_dimensions}
  \rd \boldsymbol{X}(t)
  =
  \left [ K_x \otimes F \right ] \boldsymbol{X}(t)\, \rd t
  +
  \left [ K_{\varepsilon} \otimes L \right ] \, \rd B(t),
\end{align}
with initial condition $\boldsymbol X(0) \sim \mathcal{N}(m(0),P(0))$, mean $m(0) \in \mathbb R^{d(q+1)}$ and covariance matrix $P(0) \in \mathbb R^{d(q+1) \times d(q+1)}$, as well as an underlying $d$-dimensional Brownian motion $B$ (independent of $\boldsymbol{X}(0)$).
Now, insertion of $K_x \otimes F$ and ${K_{\varepsilon}} \otimes L$ for $F$ and $L$ into \cref{matrix:Q} yields new predictive matrices $\tilde A$ and $\tilde Q$.
If we now choose $K_x = I_d $ and $K_{\varepsilon} = I_d$, substitute $\tilde A$ and $\tilde Q$ for $A$ and $Q$ in \cref{eq:def_predictive_mean,eq:C^-_predict}, and use the $d(q+1)$-dimensional GP $\boldsymbol{X}$ from \cref{eq:SDE_with_dependent_dimensions} with $m(0) \in \mathbb R^{d(q+1)}$ and $P(0) \in \mathbb R^{d(q+1) \times d(q+1)}$ as a prior, we have equivalently defined the version of Gaussian ODE filtering with independent dimensions from \Cref{Gaussian ODE filtering}.
If we, however, choose different symmetric positive semi-definite matrices for $K_x$ and $K_{\varepsilon}$, we introduce, via $\tilde A$ and $\tilde Q$, a correlation in the development of the solution dimensions $(x_0,\dots,x_{d-1})^{\intercal}$ as well as the error dimensions $(\varepsilon_0,\dots,\varepsilon_d)^{\intercal}$ respectively.
Note that, while $K_{\varepsilon}$ plays a similar role as $C^h$ in \citet[Assumption 1]{conrad_probability_2017} in correlating the numerical errors, the matrix $K_x$ additionally introduces a correlation of the numerical estimates, that is $m$, along the time axis.
Even more flexible correlation models (over all modeled derivatives) can be employed by inserting arbitrary matrices (of the same dimensionality) for $K_x \otimes F$ and ${K_{\varepsilon}} \otimes L$ in \cref{eq:SDE_with_dependent_dimensions}, but such models seem hard to interpret.
For future research, it would be interesting to examine whether such GP models with dependent dimensions are useful in practice.
There are first publications \citep{biol2018bq,gessner2019bq} on this topic for integrals, but not yet for ODEs.

\section{Illustrative example}
\label{sec:illustrative_example}

To illustrate the algorithm defined in \Cref{Gaussian ODE filtering}, we apply it to a special case of the Riccati equation \citep[p.~73]{Davis62}
\begin{align} \label{eq:Riccati_ODE}
  \frac{\rd x}{\rd t} (t)
  =
  f(x(t))
  &=
  - \frac{(x(t))^3}{2},
  \quad
  x(0)
  =
  1,
  \\
  \Big(\textrm{solution: } x(t) 
  &= 
  (t+1)^{-1/2} \Big),
\end{align}
with step size $h=0.1$, measurement noise $R=0.0$ (for simplicity) as well as prior hyperparameters $q=1$, $\sigma^2 = 10.0$ and $c_i = 0$ for all $i \in [q+1]$ (recall \cref{SDE}), i.e.~with a 1-times integrated Brownian motion prior whose drift and diffusion matrices are, by \cref{A_prediction}, given by
\begin{align}
  A(h)
  =
  \begin{pmatrix}
    1 & h \\
    0 & 1
  \end{pmatrix},
  \qquad
  Q(h)
  =
  \begin{pmatrix}
    1/300 & 1/20 \\
    1/20 & 1
  \end{pmatrix}.
\end{align}

As the ODE \cref{eq:Riccati_ODE} is one-dimensional (i.e.~$d=1$), the dimension index $j \in [d]$ is omitted in this section.
Since the initial value and derivative are certain at $x(0) = 1$ and $\dot x(0) = f(x_0) = -1/2$, our prior GP is initialized with a Dirac distribution (i.e.~$\boldsymbol{X}(0) = ( X^{(0)}(0), X^{(1)}(0) )^{\intercal}  \sim \delta_{(x_0,f(x_0))} = \delta_{(1,-1/2)}$).
Therefore, $\boldsymbol{m}(0) = (1,-1/2)^{\intercal}$ and $P(0) = 0 \in \R^{2 \times 2}$ for the initial filtering mean and covariance matrix.
Now, the Gaussian ODE Filter computes the first integration step by executing the prediction step \cref{eq:def_predictive_mean,eq:C^-_predict}
\begin{align}    
  \boldsymbol{m}^-(h)
  &=
  A(h) \boldsymbol{m}^-(0)
  \notag
  \\
  &=
  \left( m^{(0)}(0) + h m^{(1)}(0),m^{(1)}(0) \right )^{\intercal}
  \notag
  \\
  &=
  \left( 19/20, -1/2 \right )^{\intercal}, \qquad \text{and}
  \label{concrete_example_prediction:mean}
  \\
  P^-(h)
  &=
  0+Q(h)
  =
  \begin{pmatrix}
    1/300 & 1/20 \\
    1/20 & 1
  \end{pmatrix}.
  \label{concrete_example_prediction:covariance}
\end{align}
Note that, for all $i \in [q+1]$, $m^{-,(i)}(h)$ is obtained by a $(q-i)$\textsuperscript{th}-order Taylor expansion of the state $\boldsymbol{m}(0) = (x_0,f(x_0))^{\intercal} \in \mathbb R^{q+1}$. 
Based on this prediction, the data is then generated by
\begin{align} 
  y(h)
  &=
  f\left(m^{-,(0)}(h)\right)
  \stackrel{\text{\cref{concrete_example_prediction:mean}}}{=}
  f(19/20)
  \notag
  \\
  &\stackrel{\text{\cref{eq:Riccati_ODE}}}{=}
  -6859/16000
  \label{eq:concrete_example_y}
\end{align}
with variance $R = 0.0$.
In the subsequent update step \cref{def_beta_MM,eq:def_y_MAP,def:r,eq:def_predictive_mean}, a Bayesian conditioning of the predictive distribution \cref{concrete_example_prediction:mean,concrete_example_prediction:covariance} on this data is executed:
\begin{align}
  \boldsymbol{\beta} (h)
  &=
  \left( \beta^{(0)}(h), \beta^{(1)}(h)  \right)^{\intercal}
  \notag
  \\
  &=
  \left ( \frac{P^-(h)_{01}}{(P^-(h))_{11} + R}, \frac{P^-(h)_{11}}{(P^-(h))_{11} + R}  \right )^{\intercal}
  \notag
  \\
  &\stackrel{\text{\cref{concrete_example_prediction:covariance}}}{=}
  \left ( \frac{1}{20}, 1  \right )^{\intercal},
  \label{concrete_example:beta}
  \\
  r(h)
  &=
  y(h) - m^{-,(1)}(h)
  \notag
  \\
  &\stackrel{\text{eqs. }\eqref{concrete_example_prediction:mean},\eqref{eq:concrete_example_y}}{=}
  -6859/16000 + 1/2
  \notag
  \\
  &=
  1141/16000
  ,
  \label{concrete_example:r}
  \\
  \boldsymbol{m}(h)
  &\stackrel{\text{\cref{eq:def_predictive_mean}}}{=}
  \begin{pmatrix}
    m^{-,(0)}(h) + \beta^{(0)}(h) r(h)
    \\
    m^{-,(1)}(h) + \beta^{(1)}(h) r(h)
  \end{pmatrix}
  \notag
  \\
  &\stackrel{\text{eqs. }\eqref{concrete_example_prediction:mean},\eqref{concrete_example:beta},\eqref{concrete_example:r}}{=}
  \begin{pmatrix} 305141/320000 \\ -6859/16000 \end{pmatrix},
  \label{eq:conrete_example_update_step}
\end{align}
which concludes the step from $0$ to $h$.
The next step $h \to 2h$ starts with computing $m^{-,(i)}(2h)$ by a $(q-i)$\textsuperscript{th}-order Taylor expansion of the $i$\textsuperscript{th} state $m^{(i)}(h)$, for all $i \in [q+1]$.
Note that, now, there is 
a non-zero \emph{state misalignment} (recall \cref{def:delta^i}):
\begin{align}
  \delta^{(1)}(h)
  &\overset{\cref{def:delta^i}}{=}
  \left \vert m^{(1)}(h) - f\left ( m^{(0)}(h) \right )  \right \vert
  \\
  &=
  \left \vert - \frac{6859}{16000} - \frac{1}{2} \left(\frac{305141}{320000} \right )^3  \right \vert
  \\
  &\approx
  0.00485
  >0
\end{align}
which confirms the exposition on the possibility of $\delta^{(i)} > 0$ from \Cref{sec:role_of_state_misalignment}.
Note that $\delta$ tends to increase with $R$; e.g., if $R=1.0$ in the above example, then $\delta^{(1)}(h) \approx 0.03324$.

\section{Experiment: Global convergence of state misalignments $\delta$}
\label{appendix:section:delta_convergence}

\begin{figure}
  \begin{center}
  \resizebox{.5\textwidth}{!}{\input{fig/exp33_AppendixB_qmax=4_final}}
  \vspace{-20pt}
  \caption{Work-precision diagram plotting the number of function evaluations (\# Evals of $f$) against the final state misalignment $\delta^{(1)}(T)$; cf.~\Cref{fig:wpd_1}.\label{fig:state_misalignment_wpd}}
  \end{center}
  \vspace{-15pt}
\end{figure}

\Cref{fig:state_misalignment_wpd} depicts the global convergence of the state misalignment $\delta^{(1)}(T)$ in the above example \cref{eq:Riccati_ODE}, as detailed in \Cref{sec:illustrative_example}, for different choices of $q$.
The plotting is analogous to \Cref{fig:wpd_1}.
The resulting convergence rates of $h^{q+1}$ confirm \Cref{lemma:delta_global_bound} and suggest that it may also be generalizable to $q \geq 2$.

\section{Proof of \Cref{eq:Phi_i_=_f_i-1}}
\label{subsec:Proof_of_Eq}

We prove the stronger statement
\begin{align}
  \label{eq:appendix_more_general}
  \Phi_t^{(i+1)}(a)
  =
  f^{(i)} \left ( \Phi_t^{(0)}(a)  \right ),
\end{align}
from which \cref{eq:Phi_i_=_f_i-1} follows by inserting $t=0$ and $\Phi_0^{(0)}(a) = a$.
Hence, it remains to show \cref{eq:appendix_more_general}.

\begin{proof}[of \cref{eq:appendix_more_general}]
  By induction over $i \in \{0,\dots,q\}$.
  The base case $(i=0)$ is obtained using the fundamental theorem of calculus and $f^{(1)} = f$:
  $
    \Phi_t^{(1)}(a)
    =
    f\left(\Phi_t^{(0)}(a) \right)
    =
    f^{(1)}\left(\Phi_t^{(0)}(a) \right).
  $
  For the inductive step $(i-1) \to i$, we conclude (using the inductive hypothesis (IH), the chain rule (CR), the base case (BC) and $f^{(i)} = \nabla_x f^{(i-1)} \cdot f $) that
  \begin{align}
    \Phi_t^{(i+1)}(a)
    &=
    \frac{\rd}{\rd t} \Phi_t^{(i)}(a)
    \notag
    \\
    &\stackrel{\text{(IH)}}{=}
    \frac{\rd}{\rd t} f^{(i-1)}\left(\Phi_t^{(0)}(a) \right)
    \notag
    \\
    &\stackrel{\text{(CR)}}{=}
    \nabla_x f^{(i-1)}\left(\Phi_t^{(0)}(a) \right) \frac{\rd}{\rd t} \Phi_t^{(0)}(a)
    \notag
    \\
    &=
    \nabla_x f^{(i-1)}\left(\Phi_t^{(0)}(a) \right) \cdot f \left(\Phi_t^{(0)}(a) \right)
    \notag
    \\
    &=
    \left [ \nabla_x f^{(i-1)} \cdot f \right ] \left(\Phi_t^{(0)}(a) \right)
    \notag
    \\
    &\stackrel{\text{(BC)}}{=}
    f^{(i)} \left ( \Phi_t^{(0)}(a)  \right ).
  \end{align}
  \qed
\end{proof}

\section{Proof of \Cref{lemma:r}}
\label{appendix:local_technical_lemma}

\begin{proof}
  Again, w.l.o.g.~$d=1$.
  Recall that, by \cref{def:r}, $r$ is implied by the values of $m^{-,(0)}$ and $m^{-,(1)}$.
  By insertion of 
  \begin{align}
    &m^{-,(i)}((n+1)h)
    \notag
    \\
    &\quad =
    \sum_{k=i}^q \frac{h^{k-i}}{(k-i)!} m^{(k)}(nh) + K \theta \left \vert m^{(q)}(nh) \right \vert h^{q+1-i}
  \end{align}
  (due to \cref{A_prediction,def:Psi}) into the definition \cref{def:r} of $r((n+1)h)$, we obtain the following equality which we then bound by repeated application of the triangle inequality:
  \begin{align}
    \notag
    &\left \vert r((n+1)h) \right \vert
    =
    \Bigg \vert f\left(\sum_{k=0}^q \frac{h^k}{k!} m^{(k)}(nh)
    +
    \rev {K \theta \left \vert m^{(q)}(nh) \right \vert  h^{q+1} } \right)
    \\
    &
    \hspace{2cm}- \left( \sum_{k=1}^q \frac{h^{k-1}}{(k-1)!} m^{(k)}(nh) + \rev{K \theta \left \vert m^{(q)}(nh) \right \vert h^{q}} \right) \Bigg \vert
    \notag
    \\
    &\quad \leq
    \Bigg \vert f\left(\sum_{k=0}^q \frac{h^k}{k!} m^{(k)}(nh) + \rev{K \theta \left \vert m^{(q)}(nh) \right \vert h^{q+1}} \right)
    \notag
    \\ &\qquad \quad
    - 
    \left( \sum_{k=1}^q \frac{h^{k-1}}{(k-1)!} m^{(k)}(nh) \right) \Bigg \vert
    \ + \
    \rev{K \theta \left \vert m^{(q)}(nh) \right \vert h^{q}}
    \notag
    \\
    & \stackrel{\text{\cref{def:delta^i}}}{\leq}
    I_1(h) + I_2(h) + I_3(h) 
    \notag
    \\ &\quad \phantom{\stackrel{\text{\cref{def:delta^i}}}{\leq}}
    + \sum_{k=1}^q \frac{h^{k-1}}{(k-1)!} \delta^{(k)}(nh)
    + \rev{K \theta \left \vert m^{(q)}(nh) \right \vert h^{q}},
    \label{bound:r_proof}
  \end{align}
  \hans{
  \emph{Expanded version of \cref{bound:r_proof}}
  \begin{align}
    \left \vert r((n+1)h) \right \vert
    =
    \Bigg \vert f\left(\sum_{k=0}^q \frac{h^k}{k!} m^{(k)}(nh) + K \theta \left \vert m^{(q)}(nh) \right \vert h^{q+1} \right)
    \\
    - \left( \sum_{k=1}^q \frac{h^{k-1}}{(k-1)!} m^{(k)}(nh) + K \theta \left \vert m^{(q)}(nh) \right \vert h^{q} \right) \Bigg \vert
    \\
    \leq
    \underbrace{\left \vert f\left(\sum_{k=0}^q \frac{h^k}{k!} m^{(k)}(nh) + K \theta \left \vert m^{(q)}(nh) \right \vert h^{q+1} \right)
        - \left( \sum_{k=1}^q \frac{h^{k-1}}{(k-1)!} m^{(k)}(nh) \right) \right \vert}_{\leq I_1(h) + I_2(h) + I_3(h) + \sum_{k=1}^q \frac{h^{k-1}}{(k-1)!} \delta^{(k)}(nh), \text{ by } \cref{def:delta^i} }
    + K \theta \left \vert m^{(q)}(nh) \right \vert h^{q}
  \end{align}
   }
where $I_1$, $I_2$, and $I_3$ are defined and bounded as follows, using \Cref{ass:f_Global_Lipschitz} and \Cref{lemma:Taylor_expansion}:
  \begin{align}
    I_1(h)
    &\defeq
    \Bigg \vert f \left( \sum_{k=0}^q \frac{h^k}{k!} m^{(k)}(nh) + \rev{K \theta \left \vert m^{(q)}(nh) \right \vert h^{q+1}} \right)
    \notag
    \\
    &\phantom{defeq \Bigg \vert}
    - f\left( \sum_{k=0}^q \frac{h^k}{k!} \Phi_0^{(k)}\left( m^{(0)}(nh) \right) \right)   \Bigg \vert
    \notag
    \\
    &\leq
    L \sum_{k=0}^q \frac{h^k}{k!} \delta^{(k)}(nh)
    +
    \rev{L K \theta \left \vert m^{(q)}(nh) \right \vert h^{q+1}},
    \label{bound:I_1}
  \end{align}
  \begin{align}
    I_2(h)
    &\defeq
    \Big \vert f \left( \sum_{k=0}^q \frac{h^k}{k!} \Phi_0^{(k)}\left( m^{(0)}(nh) \right) \right) - f\left( \Phi_h^{(0)} \left( m^{(0)}(nh)   \right) \right) \Big \vert
    \notag
    \\
    &\leq
    L \left \vert \sum_{k=0}^q \frac{h^k}{k!} \Phi_0^{(k)}\left( m^{(0)}(nh) \right) - \Phi_h^{(0)} \left(m^{(0)} (nh) \right)   \right \vert
    \notag
    \\
    &\stackrel{\text{\cref{eq:Taylor_expansion}}}{\leq}
    Kh^{q+1}, \label{bound:I_2}
  \end{align}
  and
  \begin{align}
    I_3(h)
    &\defeq
    \left \vert \Phi_h^{(1)} \left( m^{(0)}(nh)  \right) - \sum_{k=1}^q \frac{h^{k-1}}{(k-1)!} \Phi_0^{(k)} \left( m^{(0)}(nh) \right) \right \vert
    \notag
    \\
    &\stackrel{\text{\cref{eq:Taylor_expansion}}}{\leq}
    Kh^q. \label{bound:I_3}
  \end{align}
  Inserting \cref{bound:I_1}, \cref{bound:I_2}, and \eqref{bound:I_3} into \cref{bound:r_proof} (and recalling $\delta^{(0)}=0$) yields \cref{bound:r}.
  \qed
\end{proof}

\section{Proof of \Cref{lemma:sequence_of_contractions}}
\label{Appendix:Banach}


\begin{proof}
  Let $\tilde u_0 = u^{\ast}$ and $\tilde u_n = T_n(\tilde u_{n-1})$, for $n \in \mathbb N$.
  Then,
  \begin{align}
    d(u^{\ast},x_n)
    \leq
    \underbrace{d(u^{\ast},u_n)}_{\to 0}
    +
    \underbrace{d(u_n,\tilde u_n)}_{\qefed a_n}
    +
    \underbrace{d(\tilde u_n,x_n)}_{\to 0},
  \end{align}
  where the last summand goes to zero by
  \begin{align*}
    d(\tilde u_n,x_n)
    &=
    d\left((T_n \circ \dots \circ T_1) (u^{\ast}), (T_n \circ \dots \circ T_1)(x_0)\right)
    \\
    &\leq 
    \bar L^n d(u^{\ast},x_0)\ \to\ 0, \qquad \text{ as } n \to \infty.
  \end{align*}
  Hence, it remains to show that $\lim_{n\to \infty} a_n = 0$.
  The $\bar L$-Lipschitz continuity of $T_n$ and the triangle inequality yield that
  \begin{align}
    a_n
    &=
    d(T_n(u_{n}), T_n(\tilde u_{n-1}))
    \notag
    \\
    &\leq
    \bar L\left [ d(u_n, u_{n-1}) + d( u_{n-1}, \tilde u_{n-1}) \right ]
    \notag
    \\
    &=
    \bar L a_{n-1} + b_{n-1}
    ,
    %
  \end{align}
  where $b_{n} \defeq \bar L d(u_{n+1},u_{n}) \to 0$.
  Now, for all $m \in \mathbb N$, let $a^{(m)}_0 \defeq a_0 $ and $a^{(m)}_n \defeq \bar La^{(m)}_{n-1} + b_m$.
  By BFT, $\lim_{n \to \infty} a^{(m)}_n \allowbreak = \allowbreak b_m / (1-\bar L)$.
  Since, for all $m \in \mathbb N$, $a_n \leq a^{(m)}_n$ for sufficiently large $n$, it follows that
  \begin{align}
    0
    \leq
    \limsup_{n \to \infty} a_n
    \leq
    \lim_{n \to \infty} a^{(m)}_n
    =
    \frac{b_m}{1-\bar L},
    \qquad
    \forall m \in \mathbb N.
  \end{align}
  Since the convergent sequence $u_n$ is in particular a Cauchy sequence, $\lim_{m \to \infty} b_m = 0$ and, hence, $0 \leq \lim_{n \to \infty} a_n = \limsup_{n \to \infty} a_n \leq 0 $.
  Hence, $\lim_{n \to \infty} a_n = 0$.
  \qed
\end{proof}

\section{Proof of Proposition \ref{proposition:steady_states_global}} 
\label{Appendix:Proposition_Proof}

\hans{
The following proof is quite technical.
I'm confident that it's correct: the steady-state values from \cref{lemma:steady_states_values_i,lemma:steady_states_values_ii,lemma:steady_states_values_iii,lemma:steady_states_values_iv,lemma:steady_states_values_v,lemma:steady_states_values_vi} are excessively unit-tested (I can send you the code, if you like) and the bounds mostly inherit the rates from these steady-state values and their initializations.
I hope that I found a balance between a concise description and readability.
Here is a chronological list of the steps in the following proof.
(I also added some explanation boxes at dense parts to make it easier for you.)
I hope it makes it easier to read the actual proof then.
The claims are shown in the following order: \cref{lemma:steady_states_values_i}, \cref{ineq:steady_state_lemma_i}, \cref{lemma:steady_states_values_ii}, \cref{ineq:steady_state_lemma_ii}, \cref{lemma:steady_states_values_iii}, \cref{lemma:steady_states_values_v}, \cref{lemma:steady_states_values_vi}, \cref{lemma:steady_states_values_iv}, \cref{ineq:ineq:steady_state_lemma_vi}, \cref{ineq:ineq:steady_state_lemma_v}, \cref{ineq:steady_state_lemma_iv}.
\begin{enumerate}[Step 1.]
  \item We begin by expressing $P_{11}(nh)$ in terms of $P_{11}^-(nh)$ in \cref{eq:P_11_expressed_by_P_11^-}.
  \item Analogously, we express $P_{11}^-((n+1)h)$ in terms of $P_{11}(nh)$ in \cref{eq:P_11^-_expressed_by_P_11}
  \item We validate \cref{lemma:steady_states_values_i} by pushing $P_{11}^{-,\infty}$ through the recursion for $P_{11}^-$ consisting of \cref{eq:P_11_expressed_by_P_11^-} and \cref{eq:P_11^-_expressed_by_P_11} from Step 1 and 2 and checking that it's still the same.
  \item We show \cref{ineq:steady_state_lemma_i} by bounding $\vert P_{11}^-(nh) \vert$ against the sum of its initial distance of $P_{11}^-$ to the steady state and the steady state itself
  \item Plugging $P_{11}^{-,\infty}$ for $P^-_{11}(nh)$ into the expression \cref{eq:P_11_expressed_by_P_11^-} from Step 1 yields \cref{lemma:steady_states_values_ii}
  \item Since $P_{11}(nh)$ monotonously increases in $P_{11}^-(nh)$, \cref{ineq:steady_state_lemma_i} implies \cref{ineq:steady_state_lemma_ii}, which we execute in \cref{proof:P_11_bound}
  \item Then, in \cref{eq:recursion:P_01^-}, we construct the recursion for $P_{01}^-((n+1)h)$ (whose precise form depends on $P_{11}^-(nh)$). Hence, for every $n$, we get a different contraction $T_n$ that maps $P_{01}^-(nh)$ to $P_{01}^-((n+1)h)$, whose fixed points fortunately converge, for $n \to \infty$. Hence, application of \Cref{lemma:sequence_of_contractions} yields \cref{lemma:steady_states_values_iii}.
  \item Now, \cref{lemma:steady_states_values_v,lemma:steady_states_values_vi} are yielded by plugging the already shown steady states $P_{01}^{\infty}$ and $P_{01}^{-,\infty}$ into the definitions of $\beta^{\infty,(0)}(nh)$ and $\beta^{\infty,(1)}(nh)$, which we execute in \cref{eq_in_proof_beta_0,eq_in_proof_beta_1}
  \item The steady state \cref{lemma:steady_states_values_iv} is then obtained via the equality $P_{01}(nh) = R \beta^{(0)}(nh)$ (see \cref{eq:proof_P01^inf_lemma})
  \item \cref{ineq:ineq:steady_state_lemma_vi} is then by bounding $\vert 1 - \beta^{(1)}(nh) \vert$ using $\inf_n P_{11}^-(nh) \geq \sigma^2 h$ (see \cref{proof:1-beta1_bound})
  \item Since jointly maximizing over both $P_{01}^-$ and $P_{11}^-$ does not yield a sharp bound, we show \cref{ineq:ineq:steady_state_lemma_v} by induction (which is valid since we fix the constant K to be $\left ( \frac{2K_0}{\sigma^2} + \frac 12 \right ) \vee 1$; see also explanation next to \cref{ineq:ineq:steady_state_lemma_v_hat_beta}).
  \item insertion of the bound \cref{ineq:ineq:steady_state_lemma_v} into $\vert P_{01}(nh) \vert = R \vert \beta^{(0)}(nh) \vert$ yields the missing inequality \cref{ineq:steady_state_lemma_iv}
\end{enumerate}
(Reasons for why the fixed points are attractive and why the inequalities are sharp are given in the text.)
}

\begin{proof}
    Again, w.l.o.g.~$d=1$.
    We prove the claims in the following order: \cref{lemma:steady_states_values_i}, \cref{ineq:steady_state_lemma_i}, \cref{lemma:steady_states_values_ii}, \cref{ineq:steady_state_lemma_ii}, \cref{lemma:steady_states_values_iii}, \cref{lemma:steady_states_values_v}, \cref{lemma:steady_states_values_vi}, \cref{lemma:steady_states_values_iv}, \cref{ineq:ineq:steady_state_lemma_vi}, \cref{ineq:ineq:steady_state_lemma_v}, \cref{ineq:steady_state_lemma_iv}.
    The sharpness of these bounds is shown, directly after they are proved.
    As a start, for \cref{lemma:steady_states_values_i}, we show that $P_{11}^{-,\infty}$ is indeed the unique fixed point of the recursion for $\{ P_{11}^{-}(nh) \}_n$ by checking that, if $P_{11}^-(nh) = \frac 12 \left (\sigma^2 h + \sqrt{4\sigma^2 R h + \sigma^4 h^2} \right )$, then also $P_{11}^-((n+1)h) = \frac 12 \left (\sigma^2 h + \sqrt{4\sigma^2 R h + \sigma^4 h^2} \right )$:
    \hans{Here is an expanded \emph{derivation} for the below statement \cref{eq:P_11_expressed_by_P_11^-}:
    \begin{align}
      P_{11}((nh))
      \stackrel{\text{\cref{eq:C_update_MM}}}{=}&
      P_{11}^-(nh) \left ( 1 - \frac{P_{11}^-(nh)}{P_{11}^-(nh) + R}   \right )
      =
      P_{11}^-(nh) \bigg ( \frac{R}{P_{11}^-(nh)  + R}  \bigg )
      \\
      =&
      \left [ \frac{\sigma^2 h + \sqrt{4\sigma^2 R h + \sigma^4 h^2}}{2}  \right ] \cdot \left [ \frac{R}{\frac{\sigma^2 h + \sqrt{4\sigma^2 R h + \sigma^4 h^2} }{2} + R  }  \right ]
      \notag
      \\
      =&
      \frac{\left( \sigma^2 h + \sqrt{4\sigma^2 R h + \sigma^4 h^2}     \right )R}{\sigma^2 h + \sqrt{4\sigma^2 R h + \sigma^4 h^2} + 2R}
      .
      \notag
    \end{align}}
    \begin{align}
      P_{11}((nh))
      &\stackrel{\text{\cref{eq:C_update_MM}}}{=}
      P_{11}^-(nh) \left ( 1 - \frac{P_{11}^-(nh)}{P_{11}^-(nh) + R}   \right )
      \notag
      \\
      \label{eq:P_11_expressed_by_P_11^-}
      &=
      \frac
      {\left( \sigma^2 h + \sqrt{4\sigma^2 R h + \sigma^4 h^2}     \right )R}
      {\sigma^2 h + \sqrt{4\sigma^2 R h + \sigma^4 h^2} + 2R}
      , \quad \text{and}
      \\
      P_{11}^-((n+1)h)
      &=
      P_{11}(nh) + \sigma^2 h
      \notag
      \\
      &\stackrel{\text{\cref{eq:P_11_expressed_by_P_11^-}}}{=}
      \frac 12 \left (\sigma^2 h + \sqrt{4\sigma^2 R h + \sigma^4 h^2} \right )
      \notag
      \\
      &=
      P_{11}^-(nh).
      \label{eq:P_11^-_expressed_by_P_11}
    \end{align}
       \hans{
  Here is a \emph{derivation} for the above statement \cref{eq:P_11^-_expressed_by_P_11}:
        \begin{align}
      P_{11}^-((n+1)h)
      =&
      P_{11}(nh) + \sigma^2 h
      \stackrel{\text{\cref{eq:P_11_expressed_by_P_11^-}}}{=}
      \\
      =&
      \frac{ \left ( \sigma^2 h + \sqrt{4 \sigma^2 R h + \sigma^4 h^2}  \right ) R } {\sigma^2 h + \sqrt{4 \sigma^2 R h + \sigma^4 h^2} + 2R}
      \notag
      \\
      &+
      \frac {\sigma^2 h \left( \sigma^2 h + \sqrt{4 \sigma^2 R h + \sigma^4 h^2} + 2R  \right )} {\sigma^2 h + \sqrt{4 \sigma^2 R h + \sigma^4 h^2} + 2R}
      \notag
      \\
      =&
      \frac { \left(\sigma^2 h + R\right) \left( \sigma^2 h + \sqrt{4 \sigma^2 R h + \sigma^4 h^2}  \right ) + 2R\sigma^2 h + 2R^2 - 2R^2 } {\sigma^2 h + \sqrt{4 \sigma^2 R h + \sigma^4 h^2} + 2R}
      \notag
      \\
      =&
      \frac { \left(\sigma^2 h + R\right) \left( \sigma^2 h + \sqrt{4 \sigma^2 R h + \sigma^4 h^2} + 2R  \right ) - 2R^2 } {\sigma^2 h + \sqrt{4 \sigma^2 R h + \sigma^4 h^2} + 2R}
      \notag
      \\
      =&
      \sigma^2 h + R - \frac {2R^2 \left( [ \sigma^2 h + 2R ] - \sqrt{4 \sigma^2 R h + \sigma^4 h^2}    \right) } { [ \sigma^2 h + 2R ]^2 - (4 \sigma^2 R h + \sigma^4 h^2) }
      \notag
      \\
      =&
      \sigma^2 h + R - \frac {2R^2 \sigma^2 h + 4R^3 - 2R^2\sqrt{4\sigma^2Rh + \sigma^4 h^2}} {4R^2}
      \notag
      \\
      =&
      \sigma^2 h + R - \frac{1}{2} \sigma^2 h - R + \frac 12 \sqrt{4 \sigma^2 R h + \sigma^4 h^2}
      \notag
      \\
      =&
      \frac 12 \left (\sigma^2 h + \sqrt{4\sigma^2 R h + \sigma^4 h^2} \right )
      =
      P_{11}^-(nh).
    %
    \end{align}
  }
    After combining \cref{eq:P_11_expressed_by_P_11^-} and \cref{eq:P_11^-_expressed_by_P_11}, the recursion for $P_{11}^-$ is given by
    \begin{align}
      P_{11}^-((n+1)h)
      &=
      \underbrace{\left ( \frac{R}{P_{11}^-(nh) + R}  \right )}_{\qefed \alpha(nh)} P_{11}^-(nh) + \sigma^2 h 
      \\
      &\qefed \tilde T \left( P_{11}^-(nh)  \right ).
    \end{align}
    Since $R$ and $P_{11}^-(nh)$ are positive variances, we know that $\inf_{n \in [T/h + 1]} P_{11}^-(nh) \geq \sigma^2 h$, and hence $\max_{n \in [T/h + 1]} \alpha(nh) \allowbreak \leq R / (\sigma^2 h + R) < 1$.
    Hence, $\tilde T $ is a contraction.
    By BFT, $P_{11}^{-,\infty}$ is the unique (attractive) fixed point of $\tilde T $, and the sequence $\{ \vert P_{11}^{-}(nh) - P_{11}^{-,\infty} \vert \}_n $ is strictly decreasing.
    Since, by \cref{eq:C_update_MM}, \cref{def:A^IOUP} \rev{with $\theta = 0$} and \Cref{ass:assumption2},
    \begin{align}
      P_{11}^-(h)
      =
      P_{11}(0) + \sigma^2 h
      \leq
      Kh,
    \end{align}
    we can, using the reverse triangle inequality and the (by BFT) strictly decreasing sequence $\{ \vert P_{11}^{-}(nh) - P_{11}^{-,\infty} \vert \}_n $, derive \cref{ineq:steady_state_lemma_i}:
    \begin{align}
      \left \vert P_{11}^-(nh) \right \vert
      &\leq
      \underbrace{\left \vert P_{11}^-(nh) - P_{11}^{-,\infty} \right \vert}_{\leq \left \vert P_{11}^-(h) - P_{11}^{-,\infty} \right \vert} 
      + \left \vert P_{11}^{-,\infty} \right \vert
      \\
      &\leq
      \underbrace{P_{11}^-(h)}_{\leq Kh} + \underbrace{2 P_{11}^{-,\infty}}_{\leq Kh^{1 \wedge \frac{p+1}2 }, \text{ by } \cref{lemma:steady_states_values_i}}
      \\
      &\leq
      Kh^{1 \wedge \frac{p+1}2 },
    \end{align}
    which is sharp because it is estimated against the maximum of the initial $P_{11}^-$ and the steady state that can both be attained.
    Recall that, by \cref{eq:P_11_expressed_by_P_11^-}, $P_{11}(nh)$ depends continuously on $P^-_{11}(nh)$, and, hence, inserting \cref{lemma:steady_states_values_i} into \cref{eq:P_11_expressed_by_P_11^-} yields \cref{lemma:steady_states_values_ii}---the necessary computation was already performed in \cref{eq:P_11_expressed_by_P_11^-}.
    Since $P_{11}(nh)$ monotonically increases in $P_{11}^-(nh)$ (because the derivative of $P_{11}(nh)$ with respect to $P_{11}^-(nh)$ is non-negative for all $P_{11}^-(nh)$ due to $R\geq 0$; see \cref{eq:P_11_expressed_by_P_11^-}), we obtain \cref{ineq:steady_state_lemma_ii}:
    \begin{align}      %
      P_{11}(nh)
      &\stackrel{\text{\cref{eq:P_11_expressed_by_P_11^-}}}{\leq}
      \frac
      {\left( \max_n P_{11}^-(nh) \right) R}
      {\max_n P_{11}^-(nh) + R}
      \\
      &\stackrel{R \sim h^p}{\leq}
      \frac
      {Kh^{1 \wedge \frac{p+1}{2}} Kh^p}
      {Kh^{1 \wedge \frac{p+1}2 } + Kh^p }
      \\
      &\leq
      \frac
      {Kh^{(p+1)\wedge \frac{3p+1}{2}}}
      {Kh^{1 \wedge p}}
      \\
      &\leq
      \begin{cases}
        Kh^{\frac{p+1}2}, & \mbox{if } p\leq 1,
        \\
        Kh^{p} , & \mbox{if } p \geq 1,
      \end{cases}
      \\
      &\leq
      Kh^{p \vee \frac{p+1}2},
      \label{proof:P_11_bound}
    \end{align}
    which is sharp because the steady state \cref{ineq:steady_state_lemma_i} has these rates.
    For \cref{lemma:steady_states_values_iii}, we again first construct the following recursion (from \cref{eq:C^-_predict}, \cref{eq:C_update_MM} and \cref{def:A^IOUP} \rev{with $\theta = 0$})
    \begin{align}
      P^-_{01}((n+1)h)
      &=
      \underbrace{\frac{R}{P^-_{11}(nh) + R}}_{= \alpha(nh)} P^-_{01}\left( nh \right )
      \notag
      \\
      &\qquad +
      \underbrace{\left ( P_{11}(nh) + \frac{\sigma^2 h}{2} \right )h}_{\qefed g(nh)}
      \\
      &=
      T_n\left(P^{-}_{01}(nh) \right )
      ,
      \label{eq:recursion:P_01^-}
    \end{align}
    where the $\alpha(nh)$-Lipschitz continuous contractions $T_n$ satisfy the prerequisites of \Cref{lemma:sequence_of_contractions}, since $\sup_n \alpha(nh) \leq R/(\sigma^2 h + R) < 1$ (due to $\inf_n P_{11}^-(nh) \geq \sigma^2 h$) and the sequence of fixed points $(1-\alpha(nh))^{-1}g(nh)$ of $T_n$ (defined by BFT) converges.
    Both $\alpha(nh)$ and $g(nh)$ depend continuously on $P_{11}^-(nh)$.
    Hence, insertion of the limits \cref{lemma:steady_states_values_i,lemma:steady_states_values_ii} yield
    \begin{align}
      \label{limit:1minusalpha}
      \lim_{n \to \infty} \left ( 1 - \alpha(nh) \right )^{-1}
      =
      \frac
      {\sigma^2 h + \sqrt{4\sigma^2 R h + \sigma^4 h^2} + 2R}
      {\sigma^2 h + \sqrt{4\sigma^2 R h + \sigma^4 h^2}},
    \end{align}
    and
    \begin{align}
      \lim_{n \to \infty} &g(nh)
      \label{limit:g}
      \\
      &=
      \frac
      {(\sigma^4 h^2 + (2R + \sigma^2 h)  \sqrt{4\sigma^2 Rh + \sigma^4 h^2} + 4R\sigma^2 h )}
      {2 (\sigma^2 h + \sqrt{4 \sigma^2 R h + \sigma^4 h^2}  + 2R )}
      h.
      \notag
    \end{align}
    Now, application of \Cref{lemma:sequence_of_contractions} implies convergence of the recursion \cref{eq:recursion:P_01^-} to the product of these two limits \cref{limit:1minusalpha,limit:g}, i.e.~\cref{lemma:steady_states_values_iii}:
    \begin{align*}
      \lim_{n \to \infty} &P_{01}^{-}(nh)
      =
      \lim_{n \to \infty} \left ( 1 - \alpha(nh) \right )^{-1}
      \times
      \lim_{n \to \infty} g(nh)
      \\
      &=
      \frac
      {\sigma^4 h^2 + (2R + \sigma^2 h) \sqrt{4\sigma^2 Rh + \sigma^4 h^2} + 4R\sigma^2 h }
      {2(\sigma^2 h + \sqrt{4\sigma^2 R h + \sigma^4 h^2})} h.
    \end{align*}
    \hans{
    Here is an expanded version of this argument:
    Since $\alpha(nh) = \frac{R}{P_{11}^-(nh) + R}$, it follows that $(1-\alpha(nh))^{-1} = \frac{P_{11}^-(nh) + R}{P_{11}^-(nh)}$.
    Plugging in $\lim_{n \to \infty} P_{11}^-(nh) $ from \cref{lemma:steady_states_values_i} yields
    \begin{align}
      \lim_{n \to \infty} \left ( 1 - \alpha(nh) \right )^{-1}
      =
      \frac
      {\sigma^2 h + \sqrt{4\sigma^2 R h + \sigma^4 h^2} + 2R}
      {\sigma^2 h + \sqrt{4\sigma^2 R h + \sigma^4 h^2}}.
    \end{align}
    Analogously, we can insert $\lim_{n \to \infty} P_{11}(nh) $ from \cref{lemma:steady_states_values_ii} into $g(nh)$, which yields:
    \begin{align}
      \lim_{n \to \infty} g(nh)
      &=
      \left [ P_{11}^{\infty} + \frac{\sigma^2 h}{2} \right ] h
      \\
      &\stackrel{\text{\cref{lemma:steady_states_values_ii}}}{=}
      \left [ \frac{\left( \sigma^2 h + \sqrt{4\sigma^2 R h + \sigma^4 h^2}     \right )R}{\sigma^2 h + \sqrt{4\sigma^2 R h + \sigma^4 h^2} + 2R} + \frac{\sigma^2 h}{2}   \right ] h
      \\
      &=
      \left [ \frac{ 2R \left( \sigma^2 h + \sqrt{4\sigma^2 R h + \sigma^4 h^2} \right ) +\sigma^2 h \left( \sigma^2 h + \sqrt{4\sigma^2 R h + \sigma^4 h^2} + 2R \right)    }{2 \left( \sigma^2 h + \sqrt{4\sigma^2 R h + \sigma^4 h^2} + 2R \right) }     \right ]
      h
      \\
      &=
      \frac
      {\left( \sigma^2 h + 2R  \right ) \left ( \sigma^2 h + \sqrt{4\sigma^2 R h + \sigma^4 h^2}   \right ) + 2R \sigma^2 h}
      {2\left ( \sigma^2 h + \sqrt{4\sigma^2 R h + \sigma^4 h^2}  \right )}
      h
      \\
      &=
      \frac
      {(\sigma^4 h^2 + (2R + \sigma^2 h)  \sqrt{4\sigma^2 Rh + \sigma^4 h^2} + 4R\sigma^2 h )}
      {2 (\sigma^2 h + \sqrt{4 \sigma^2 R h + \sigma^4 h^2}  + 2R )}
      h
    \end{align}
    Now, by \Cref{lemma:sequence_of_contractions},
    \begin{align}
      \lim_{n \to \infty} P_{01}^{-}(nh)
      &=
      \lim_{n \to \infty} \left ( 1 - \alpha(nh) \right )^{-1}
      \cdot
      \lim_{n \to \infty} g(nh)
      \\
      &=
      \frac
      {\sigma^2 h + \sqrt{4\sigma^2 R h + \sigma^4 h^2} + 2R}
      {\sigma^2 h + \sqrt{4\sigma^2 R h + \sigma^4 h^2}}
      \\
      &\qquad \cdot
      \frac
      {(\sigma^4 h^2 + (2R + \sigma^2 h)  \sqrt{4\sigma^2 Rh + \sigma^4 h^2} + 4R\sigma^2 h )}
      {2 (\sigma^2 h + \sqrt{4 \sigma^2 R h + \sigma^4 h^2}  + 2R )}
      h
      \\
      &=
      \frac
      {\sigma^4 h^2 + (2R + \sigma^2 h) \sqrt{4\sigma^2 Rh + \sigma^4 h^2} + 4R\sigma^2 h }
      {2(\sigma^2 h + \sqrt{4\sigma^2 R h + \sigma^4 h^2})} h.
    \end{align}
    }
    For \cref{lemma:steady_states_values_v,lemma:steady_states_values_vi}, we can simply insert \cref{lemma:steady_states_values_i,lemma:steady_states_values_iii} for $P^-_{01}(nh)$ and $P^-_{11}(nh)$ respectively into their definition \cref{def_beta_MM}:
    \begin{align}
      \beta^{\infty,(0)}
      &\stackrel{\text{\cref{def_beta_MM}}}{=}
      \frac{P_{01}^{-,\infty}}{P_{11}^{-,\infty} + R}
      \\
      &\stackrel{\text{\cref{lemma:steady_states_values_i,lemma:steady_states_values_iii}}}{=}
      \frac{\sqrt{4R \sigma^2 h + \sigma^4 h^2}}{ \sigma^2 h + \sqrt{4R \sigma^2 h + \sigma^4 h^2} }
      h,
      \label{eq_in_proof_beta_0}
    \end{align}
    and
    \hans{
    Here is a derivation of \cref{eq_in_proof_beta_0}:
    \begin{align*}
      \beta^{\infty,(0)}
      &\stackrel{\text{\cref{def_beta_MM}}}{=}
      \frac{P_{01}^{-,\infty}}{P_{11}^{-,\infty} + R}
      \\
      &\stackrel{\text{\cref{lemma:steady_states_values_i},\cref{lemma:steady_states_values_iii}}}{=}
      \frac
      { \left ( \sigma^4 h^2 + \left( 2R + \sigma^2 h \right) \sqrt{4R\sigma^2  h + \sigma^4 h^2} + 4R \sigma^2 h  \right )\cdot 2}
      {2\left( \sigma^2 h + \sqrt{4R\sigma^2 h + \sigma^4 h^2}  \right) \left  ( \sigma^2 h + \sqrt{4\sigma^2 R h + \sigma^4 h^2} + 2R \right )}
      h
      \notag
      \\
      &=
      \frac
      {\sqrt{4R \sigma^2 h + \sigma^4 h^2}\left( \sigma^2 h + \sqrt{4R \sigma^2 h + \sigma^4 h^2} + 2R  \right) }
      {\left(\sigma^2 h + \sqrt{4R \sigma^2 h + \sigma^4 h^2}\right ) \left ( \sigma^2 h + \sqrt{4R \sigma^2 h + \sigma^4 h^2} + 2R   \right ) }
      h
      \notag
      \\
      &=
      \frac{\sqrt{4R \sigma^2 h + \sigma^4 h^2}}{\sigma^2 h + \sqrt{4R \sigma^2 h + \sigma^4 h^2} }
      h, \qquad \text{and} \qquad
    \end{align*}
    }
    \begin{align}
      \label{eq_in_proof_beta_1}
      \beta^{\infty,(1)}
      \stackrel{\text{\cref{def_beta_MM,lemma:steady_states_values_i}}}{=}
      \frac {\sigma^2 h + \sqrt{4\sigma^2 R h + \sigma^4 h^2}} { \sigma^2 h + \sqrt{4\sigma^2 R h + \sigma^4 h^2} + 2R }.
    \end{align}
    \hans{
    Here is a longer version of \cref{eq_in_proof_beta_1}
    \begin{align}
      \beta^{\infty,(1)}
      \stackrel{\text{\cref{def_beta_MM},\cref{lemma:steady_states_values_i}}}{=}
      \frac
      {\frac 12 \left ( \sigma^2 h + \sqrt{4 \sigma^2 R h + \sigma^4 h^2} \right )}
      {\frac 12 \left( \sigma^2 h + \sqrt{4 \sigma^2 R h + \sigma^4 h^2}  \right ) + R}
      =
      \frac {\sigma^2 h + \sqrt{4\sigma^2 R h + \sigma^4 h^2}} { \sigma^2 h + \sqrt{4\sigma^2 R h + \sigma^4 h^2} + 2R }.
    \end{align}
    }
  These steady states \cref{lemma:steady_states_values_v,lemma:steady_states_values_vi} are again unique and attractive because $\beta^{(0)}(nh)$ and $\beta^{(1)}(nh)$ depend continuously on $P_{11}^-(nh)$ and $P_{01}^-(nh)$.
  Next, recall that 
    \begin{align}
    \label{eq:proof_P01^inf_lemma}
    P_{01}(nh)
    &\stackrel{\text{\cref{eq:C_update_MM}}}{=}
    \left ( 1-\frac{P_{11}^{-}(nh)}{P_{11}^{-}(nh) + R} \right ) P_{01}^{-}(nh)
    \\
    &=
    R \frac{P_{01}^{-}(nh)}{P_{11}^{-}(nh) + R}
    \stackrel{\text{\cref{def_beta_MM}}}{=}
    R \beta^{(0)}(nh),
  \end{align}
  which, since $P_{01}(nh)$ depends continuously on $\beta^{(0)}(nh)$, implies the unique (attractive) fixed point $P^{\infty}_{01}(nh) = R \beta^{\infty,(0)}$, which yields \cref{lemma:steady_states_values_iv}.
  Now, exploiting \cref{def_beta_MM} and $\inf_n P_{11}^-(nh) \geq \sigma^2 h$ yields \cref{ineq:ineq:steady_state_lemma_vi}:
  \begin{align}
    \label{proof:1-beta1_bound}
    \left \vert 1 - \beta^{(1)}(nh) \right \vert
    &=
    \frac{R}{P_{11}^-(nh) + R}
    \\
    &\leq
    \frac{R}{\sigma^2 h + R }
    \\
    &\stackrel{\rev{R \sim h^p}}{=}
    \frac{Kh^p}{Kh + Kh^p}
    \\
    &\leq
    Kh^{(p-1) \vee 0},
  \end{align}
  which is sharp because $\inf_n P_{11}^-(nh) \geq K h$ is sharp (due to \cref{eq:C^-_predict,def:A^IOUP}).
  And since, for $\beta^{(0)}$, maximizing over both $P_{01}^-(nh)$ and $P_{11}^-(nh)$ at the same time does not yield a sharp bound (while above in \cref{proof:1-beta1_bound,proof:P_11_bound} the maximization over just one quantity does), we prove \cref{ineq:ineq:steady_state_lemma_v} by inductively showing that
  \begin{align}
    \label{ineq:ineq:steady_state_lemma_v_hat_beta}
    \left \vert \beta^{(0)}(nh)  \right \vert \leq \hat \beta h, \qquad \forall n \in \mathbb N, \\ \text{with} \qquad \hat \beta \defeq \left ( \frac{2K_0}{\sigma^2} + \frac 12 \right ) \vee 1>0,
  \end{align}
  where $K_0 > 0$ is the constant from \Cref{ass:assumption2}.
  The constant $\hat \beta$ is independent of $n$ and a possible choice for $K$ in \cref{ineq:ineq:steady_state_lemma_v}.
  The base case ($n=1$) follows from
  \begin{align}
    \label{ineq:base_clause_induction_beta0}
    \left \vert \beta^{(0)}(h)  \right \vert
    &=
    \frac{\left \vert P_{01}^-(h)  \right \vert}{P_{11}^-(h) + R}
    \\
    &\stackrel{\text{\cref{eq:C^-_predict}}}{\leq}
    \frac
    { \left \vert P_{01}(0) \right \vert + h P_{11}(0) + \frac{\sigma^2}{2} h^2 }
    {\sigma^2 h}
    \\
    &\stackrel{\text{Ass.~}\ref{ass:assumption2}}{\leq}
    \left ( \frac{2K_0}{\sigma^2} + \frac 12 \right ) h
    \\
    &\leq
    \hat \beta h.
  \end{align}
  \hans{
  Here is an expanded form of the above inequality \cref{ineq:base_clause_induction_beta0}:
    \begin{align}
    \left \vert \beta^{(0)}(h)  \right \vert
    &=
    \frac{\left \vert P_{01}^-(h)  \right \vert}{P_{11}^-(h) + R}
    \stackrel{\text{\cref{eq:C^-_predict}}}{=}
    \frac
    {\left \vert P_{01}(0) + h P_{11}(0) + \frac {\sigma^2}{2}h^2   \right \vert }
    {P_{11}(0) + \sigma^2 h }
    \\
    &\leq
    \frac
    { \left \vert P_{01}(0) \right \vert + h P_{11}(0) + \frac{\sigma^2}{2} h^2 }
    {\sigma^2 h}
    \stackrel{\text{Ass. }\ref{ass:assumption2}}{\leq}
    \left ( \frac{2K_0}{\sigma^2} + \frac 12 \right ) h
    \leq
    \hat \beta h.
    \notag
  \end{align}
  }
  In the following inductive step ($n-1 \to n$) we, to avoid notational clutter, simply denote $P^-((n-1)h)_{ij}$ by $P^-_{ij}$ which leaves us---by \cref{def_beta_MM}, \cref{eq:C^-_predict} and \cref{eq:C_update_MM}---with the following term to bound:
  \begin{align}
    \left \vert \beta^{(0)}(nh) \right \vert
    &=
    \frac {\left \vert P^-_{01}(nh) \right \vert} {P^-_{11}(nh) + R}
    \\
    &\leq
    \frac {\left \vert P^-_{01} \right \vert \alpha(nh) + hP^-_{11} \alpha(nh) + \frac{\sigma^2}{2} h^2} {P^-_{11}\alpha(nh) + \sigma^2 h + R},
  \end{align}
  with $\alpha(nh) = \left( 1 - \frac{P^-_{11}}{P^-_{11}+R} \right) =  \frac{R}{P^-_{11}+R} $.
  Application of the inductive hypothesis (i.e.~$P_{01}^- \leq \hat \beta ( P^-_{11} + R )$) yields, after some rearrangements, that
  \begin{align}
    \left \vert \beta^{(0)}(nh) \right \vert
    &\leq
    \frac {\hat \beta \left( P^-_{11} + R \right) h \alpha(nh) + hP^-_{11} \alpha(nh) + \frac{\sigma^2}{2} h^2} {P^-_{11}\alpha(nh) + \sigma^2 h + R}
    \notag
    \\
    &=
    \frac
    {2 \hat \beta P^-_{11} R + \sigma^2 h \left( P^-_{11} + R \right) + 2 P^-_{11} R + 2 \hat \beta R^2}
    {2\left(P^-_{11} R + \sigma^2 h \left( P^-_{11} + R \right) + P^-_{11} R + R^2 \right ) }
    h
    \notag
    \\
    &=
    \frac{2(\hat \beta + 1)\Lambda_1 + \Lambda_2 + 2 \hat \beta \Lambda_3}{4\Lambda_1 + 2\Lambda_2 + 2\Lambda_3} h,
    \label{bound:beta0inJ}
  \end{align}
  with $\Lambda_1 \defeq 2 P^-_{11}R$, $\Lambda_2  \defeq  \sigma^2 h \left( P^-_{11} + R \right)$, and $\Lambda_3  \defeq  R^2$.
  Now, application of $\hat \beta \geq 1$  yields $\vert \beta^{(0)}(nh) \vert \leq \hat \beta h$, which completes the inductive proof of \cref{ineq:ineq:steady_state_lemma_v_hat_beta}.
  This implies \cref{ineq:ineq:steady_state_lemma_v}, which is sharp because it is the order of $\beta^{(0)}$ in the steady state \cref{lemma:steady_states_values_v}, for all \rev{$p \in [0,\infty]$}.
  Now, insertion of \cref{ineq:ineq:steady_state_lemma_v} into \cref{eq:proof_P01^inf_lemma} immediately yields \cref{ineq:steady_state_lemma_iv}, which---by \cref{eq:proof_P01^inf_lemma}---inherits the sharpness of \cref{ineq:ineq:steady_state_lemma_v}.
  \qed
\end{proof}

\section{Proof of \Cref{lemma:delta_global_bound}}
\label{appendix:lemma_delta}

\begin{proof}
  For all $n \in [T/h + 1]$, we can estimate
  \begin{align}
      \delta^{(1)}(nh)
      &=
      \left \Vert m^{(1)}\left(nh\right) - f\left( m^{(0)}\left(nh\right)  \right) \right \Vert
      \\
      &=
      \left \Vert \Psi_\rev{h}^{(1)}(\boldsymbol{m}((n-1)h) - f\left( m^{(0)}\left(nh\right)  \right)  \right \Vert
      \\
      &\leq
      \underbrace{\left \Vert \Psi_\rev{h}^{(1)}(\boldsymbol{m}((n-1)h) -  f\left( m^{-,(0)}\left(nh\right) \right ) \right \Vert }_{\qefed J_1(h)}
      \notag
      \\
      &\phantom{\leq} +
      \underbrace{\left \Vert  f\left( m^{-,(0)}\left(nh\right) \right )  - f \left ( m^{(0)} \left ( nh \right ) \right ) \right \Vert}_{\defeq J_2(h)},
      \label{bound:delta_by_J1/2}
  \end{align}
    bound $J_1$, \rev{using} the definition \cref{def:Psi} of \rev{$\Psi_h^{(1)}(\boldsymbol{m}((n-1)h)$} as well as the definition \cref{def:r} of $r(nh)$, \rev{by} 
    \begin{align}
      J_1(h)
      &=
      \Bigg \Vert m^{\rev{-,(1)}}(nh) - f \left ( m^{\rev{-,(0)}}(nh)  \right ) 
      \\
      &\phantom{= \Bigg \Vert}+ \beta^{(1)}(nh) \left [ f \left( m^{\rev{-,(0)}}(nh)  \right ) - m^{\rev{-,(1)}}(nh)    \right ] 
      \Bigg \Vert
      \notag
      \\
      &\leq
      \left \Vert 1 - \beta^{(1)}(nh)  \right \Vert \rev{\left \Vert r(nh) \right \Vert}
      \\
      &\stackrel{\text{\cref{ineq:ineq:steady_state_lemma_vi}}}{\leq}
      Kh^{(p-1)\vee 0} \rev{\left \Vert r(nh) \right \Vert}
      \label{eq:delta_proof_bound_J1}
    \end{align}
    and bound $J_2$, by exploiting $L$-Lipschitz continuity of $f$, inserting the definition \cref{def:Psi} of \rev{$\Psi^{(0)}_h(\boldsymbol{m}((n-1)h)$} and applying \cref{ineq:ineq:steady_state_lemma_v} to $\left \Vert \beta^{(0)}(nh) \right \Vert$, 
    \begin{align}
      J_2(h)
      &\leq
      L \left \Vert m^{(0)}(nh) - m^{\rev{-,(0)}}(nh)  \right \Vert
      \\
      &\leq
      L \left \Vert \beta^{(0)}(nh) \right \Vert \rev{\left \Vert r(nh) \right \Vert}
      \\
      &\stackrel{\text{\cref{ineq:ineq:steady_state_lemma_v}}}{\leq}
      Kh \rev{\left \Vert r(nh) \right \Vert} .
      %
      \label{eq:delta_proof_bound_J2}
    \end{align}
  Altogether, after inserting these bounds into \cref{bound:delta_by_J1/2},
  \begin{align}
    &\delta^{(1)}(nh)
    \leq
    \left(Kh^{(p-1) \vee 0} + Kh \right) \rev{\left \Vert r(nh) \right \Vert}
    \\
    &\quad \leq
    Kh^{((p-1)\vee 0) \wedge 1} \rev{\left \Vert r(nh) \right \Vert}
    \\
    &\quad \stackrel{\text{\cref{bound:r}}}{\leq}
    Kh^{(p \vee 1) \wedge 2}
    \\
    &\quad \phantom{\stackrel{\text{\cref{bound:r}}}{\leq}}
    + \left( Kh^{((p-1)\vee 0) \wedge 1} + Kh^{(p \vee 1) \wedge 2}  \right)
    \delta^{(1)}((n-1)h)
    \notag
    \\
    &\quad \qefed
    \bar{T} \left( \delta^{(1)}((n-1)h) \right).
    \label{bound:delta_by_T_of_delta}
  \end{align}
  \rev{As $p\geq 1$ (by \Cref{ass:R}), BFT is} applicable for all sufficiently small $h>0$ such that \rev{$ Kh^{((p-1)\vee 0) \wedge 1} + Kh^{(p \vee 1) \wedge 2}<1$} and so $\bar{T}$ is a contraction \rev{with} a unique fixed point $\delta^{\infty}$ of order
  \begin{align}
    \label{eq:delta^infty_orders}
    \delta^{\infty}
    &\leq
    \frac
    { Kh^{(p \vee 1) \wedge 2 } }
    { 1 - \left( Kh^{((p-1)\vee 0) \wedge 1} + Kh^{(p \vee 1) \wedge 2}  \right) }
    \\
    &\leq
    Kh^{(p \vee 1) \wedge 2 }.
  \end{align}
  We proceed with showing by induction that, for all $n \in [T/h]$,
  \begin{align}
    \label{claim_to_be_proved_by_induction}
    \delta^{(1)}(nh)
    \leq
    \delta^{(1)}(0) \vee 2\delta^{\infty}.
  \end{align}
  \hans{The use of induction here is valid, because the inductively shown statement already specifies the constant $\delta^{(1)}(0) \vee 2\delta^{\infty}$ for which it holds for all $n$. The constant hence does not change with $n$. Hence, it is valid for the same reason why the inductive proof of \cref{ineq:ineq:steady_state_lemma_v_hat_beta} was valid. Also, note that intuitively \cref{claim_to_be_proved_by_induction} is kind of clear. $\delta^{(1)}(nh)$ is bounded by the maximum of its initialization and two times the limit of its upper bounding contraction \cref{bound:delta_by_T_of_delta}.}
  The base case $n=0$ is trivial.
  For the inductive step, we distinguish two cases.
  If $\delta^{(1)}((n-1)h) \leq \delta^{\infty}$, then $\bar{T}(\delta^{(1)}((n-1)h)) < 2 \delta^{\infty}$, since
  \begin{align}
    \bar{T}(\delta^{(1)}((n-1)h)) - \delta^{\infty}
    &\leq
    \left \vert \delta^{\infty} - \bar{T}(\delta^{(1)}((n-1)h))  \right \vert
    \\
    &<
    \delta^{\infty} - \underbrace{\delta^{(1)}((n-1)h)}_{\geq 0}
    \\
    &\leq
    \delta^{\infty}.
  \end{align}
  In this case,
  \begin{align}
    \delta^{(1)}(nh)
    &\stackrel{\text{\cref{bound:delta_by_T_of_delta}}}{\leq}
    \bar{T} \left ( \delta^{(1)}((n-1)h) \right )
    \\
    &<
    2 \delta^{\infty}
    \\
    &\leq
    \delta^{(1)}(0) \vee 2\delta^{\infty},
  \end{align}
  where the last inequality follows from the inductive hypothesis.
  In the other case, namely $\delta^{(1)}((n-1)h) > \delta^{\infty}$, it follows that
  \begin{align}
    \delta^{(1)}(nh) - \delta^{\infty}
    &\stackrel{\text{\cref{bound:delta_by_T_of_delta}}}{\leq}
    \bar{T}(\delta^{(1)}((n-1)h)) - \delta^{\infty}
    \\
    &\leq
    \left \vert \bar{T}(\delta^{(1)}((n-1)h)) - \delta^{\infty} \right \vert
    \\
    &\leq
    \left \vert \delta^{(1)}((n-1)h) - \delta^{\infty}   \right \vert
    \\
    &=
    \delta^{(1)}((n-1)h) - \delta^{\infty},
  \end{align}
  which, after adding $\delta^{\infty}$ and applying the inductive hypothesis, completes the inductive step.
  Hence, \cref{claim_to_be_proved_by_induction} holds.
  Since this bound is uniform in $n$, inserting the orders of $\delta^{(1)}(0)$ from \Cref{lemma:initial_delta_bound} and of $\delta^{\infty}$ from \cref{eq:delta^infty_orders} \rev{yields} \cref{ineq:delta}.
  \qed
\end{proof}

\section{Proof of Theorem \ref{theorem:contraction_of_credible_intervals}}
\label{appendix:uncertain_calibration_theorem}

\begin{proof}
  Again, w.l.o.g.~$d=1$.
  We first show that the bounds \cref{ineq:maxP00-,ineq:maxP00} hold and then argue that they are sharp.
  The recursion for $P_{00}^-(nh)$ is given by
  \begin{align}
    P_{00}^- ((n+1)h)
    &\stackrel{\text{eqs. } \eqref{eq:C^-_predict},\eqref{def:A^IOUP}}{=}
    P_{00}(nh) + 2hP_{01}(nh)
    \notag
    \\
    &\phantom{\stackrel{\text{eqs. } \eqref{eq:C^-_predict},\eqref{def:A^IOUP}}{=}} 
    + h^2 P_{11}(nh) + \frac{\sigma^2}3 h^3
    \\
    &=
    P_{00}^-(nh)
    -
    \beta^{(0)}(nh) P_{01}^- (nh)
    +
    \frac{\sigma^2}3 h^3,
    \notag
    \\
    &\phantom{=}
    +
    2hR \beta^{(0)}(nh)
    +
    h^2 R \beta^{(1)}(nh)
    \label{eq:recursion_P00-}
  \end{align}
  where we used $P_{00}(nh) \allowbreak = \allowbreak P_{00}^-(nh) - \beta^{(0)} P_{01}^-(nh)$ and $P_{11}(nh) \allowbreak = \allowbreak R \beta^{(1)}(nh)$ (both due to \cref{eq:C_update_MM} and \cref{def_beta_MM}), as well as $P_{01}(nh)=R \beta^{(0)}(nh)$ (see \cref{eq:proof_P01^inf_lemma}), for the last equality in \cref{eq:recursion_P00-}.
  By $P_{01}^-(nh) \leq P_{01}(nh)$ and $\vert \beta^{(1)} \vert \leq 1$ (due to \cref{def_beta_MM}), application of the triangle inequality to \cref{eq:recursion_P00-} yields
  \begin{align}
    P_{00}^-\left ((n+1)h \right)
    &\leq
    P_{00}^- ( n h )
    +
    \left \vert \beta^{(0)}(nh) \right \vert \left \vert P_{01} (nh) \right \vert
    \notag
    \\
    &\phantom{\leq}
    +
    2hR \left \vert \beta^{(0)}(nh)  \right \vert
    +
    h^2 R
    +
    \frac{\sigma^2}{3} h^3,
  \end{align}
  which, by \cref{ineq:steady_state_lemma_iv,ineq:ineq:steady_state_lemma_v}, implies
  \begin{align}
    P_{00}^-( (n+1) h )
    \leq
    P_{00}^-(nh)
    +
    Kh^{(p+2) \wedge 3}.
  \end{align}
  This, by $N=T/h$, implies \cref{ineq:maxP00-}.
  Since $P_{00}(nh) \leq P_{00}^-(nh)$, this bound is also valid for $P_{00}$, i.e.~\cref{ineq:maxP00} holds.
  The bound \cref{ineq:maxP00-} is sharp, since, e.g.~when the covariance matrices are in the steady state, the covariance matrix keeps growing by a rate of $Kh^{(p+2) \wedge 3}$ for all sufficiently small $h>0$, since the only negative summand in \cref{eq:recursion_P00-} is given by
  \begin{align}   \label{eq:product_S_1:3}
    \beta^{\infty,(0)} P_{01}^{\infty}
    =
    S_1(h)
    \times
    S_2(h)
    \times
    S_3(h)
    \ \in 
    \Theta(h^{5 \wedge \frac{3p + 7}{2} })
    ,
  \end{align}
  where the factors have, due to $R \equiv Kh^p$, the following orders:
  \begin{align}
    S_1(h)
    &=
    \frac 12 h^2 \ \in \Theta (h^2),
    \label{eq:S_1_order}
    \\
    S_2(h)
    &=
    \sqrt{(\sigma^2 h)^2 + 4 (\sigma^2 h) R}, \ \in \Theta ( h^{1 \wedge \frac{p+1}{2} }),
    \label{eq:S_2_order}
    \\
    S_3(h)
    &=
    ((\sigma^2 h) + 2 R) \sqrt{(\sigma^2 h)^2 + 4 (\sigma^2 h) R}
    \notag
    \\
    &\phantom{=} + (\sigma^2 h)^2 + 4 (\sigma^2 h) R
    \ \in 
    \Theta(h^{2 \wedge (p+1)})
    .    
    \label{eq:S_3_order}
  \end{align}
  The orders in \cref{eq:S_1_order,eq:S_2_order,eq:S_3_order} imply the order in \cref{eq:product_S_1:3}. 
  Hence, the sole negative summand $- \beta^{\infty,(0)} P_{01}^{\infty}$ of \cref{eq:recursion_P00-} is in $\Theta(h^{5 \wedge \frac{3p + 7}{2} })$ and thereby of higher order than the remaining positive summands of \cref{eq:recursion_P00-}:
  \begin{align}
    \underbrace{2hR}_{\in \Theta(h^{p+1})} \underbrace{\beta^{\infty,(0)}(nh)}_{\in \Theta(h)}
    &\ \in \Theta(h^{p+2}),
    \\
    \underbrace{h^2 R}_{\in \Theta(h^{p+2})} \underbrace{\beta^{\infty,(1)}(nh)}_{\in \Theta(1), \text{ by \cref{lemma:steady_states_values_vi}}}
    &\ \in \Theta(h^{p+2}),
    \\
    \frac{\sigma^2}3 h^3
    &\ \in \Theta\left(h^{3}\right).
  \end{align}
  Hence, for all sufficiently small $h>0$, it still holds in the steady state that $P_{00}^-((n+1)h) - P_{00}^-(nh) \geq Kh^{(p+2) \wedge 3} $, and therefore \cref{ineq:maxP00-} is sharp.
  The sharpness of \cref{ineq:maxP00-} is inherited by \cref{ineq:maxP00} since, in the steady state, by \cref{eq:C_update_MM,def_beta_MM}, $P_{00}(nh) = P_{00}^{-}(nh) - \beta^{(0),\infty} P_{01}^{-,\infty}$ and the subtracted quantity $\beta^{(0),\infty} P_{01}^{-,\infty}$ is---as shown above---only of order $\Theta(h^{5 \wedge \frac{3p + 7}{2} })$.
  \qed
\end{proof}

\end{document}